\def\today{September 19, 2011}
\def\@evenfoot{\rule{0pt}{20pt}[\today] \hfill [{\tt \jobname.tex}]}
\def\@oddfoot{\rule{0pt}{20pt}{[\tt \jobname.tex}]\hfill [\today]}
\newtheorem{theorem}{Theorem}
\newtheorem{proposition}[theorem]{Proposition}
\newtheorem{lemma}[theorem]{Lemma}
\newtheorem*{conjecture}{Conjecture}
\newtheorem{corollary}[theorem]{Corollary}
\def\CH{{\it CH}}
\def\Tot{{\rm Tot\/}} 
\def\Coend{{\it Coend}}
\def\spanext(#1,#2,#3,#4,#5){
\unitlength 1.5mm
\linethickness{0.4pt}
\begin{picture}(19,6)(3.5,11)
\thicklines
\put(11,13){\vector(-1,-1){3.2}}
\put(15,13){\vector(1,-1){3.2}}
\put(7.7,8){\makebox(0,0)[r]{$#1$}}
\put(18.3,8){\makebox(0,0)[l]{$#2$}}
\put(13,14){\makebox(0,0)[b]{$#3$}}
\put(9,12.5){\makebox(0,0)[r]{\scriptsize $#4$}}
\put(18,12.5){\makebox(0,0)[l]{\scriptsize $#5$}}
\end{picture}
}
\def\eqv{
{% Picture saved by xtexcad 2.4
\unitlength=.5pt
\begin{picture}(60.00,20.00)(-10.00,2.00)
\thicklines
\put(40.00,15.00){\vector(2,-1){0.00}}
\put(40.00,5.00){\vector(2,1){0.00}}
\qbezier(0.00,5.00)(20.00,-5.00)(40.00,5.00)
\qbezier(0.00,15.00)(20.00,25.00)(40.00,15.00)
\end{picture}}
}
\def\0-span(#1,#2,#3){\spanext(#1,#2,#3,s,t)}
\def\I{{\EuScript I}}
\def\dvespanextnahore(#1,#2,#3,#4,#5,#6,#7,#8,#9){
\unitlength 1.7mm
\begin{picture}(20,12)(3.5,5.5)
\thicklines
\put(11,13.5){\vector(-1,-1){3.5}}
\put(15,13.5){\vector(1,-1){3.5}}
\put(7,8){\makebox(0,0)[cc]{$#3$}}
\put(19,8){\makebox(0,0)[cc]{$#4$}}
\put(7,6.5){\vector(0,-1){5.5}}
\put(19,6.5){\vector(0,-1){5.5}}
\put(9,6){\vector(4,-3){8}}
\put(17,6){\vector(-4,-3){8}}
\put(7,0){\makebox(0,0)[t]{$#1$}}
\put(19,0){\makebox(0,0)[t]{$#2$}}
\put(13,14){\makebox(0,0)[b]{$#5$}}
\put(6,4){\makebox(0,0)[r]{\scriptsize $#6$}}
\put(15,6){\makebox(0,0)[cc]{\scriptsize $#6$}}
\put(11.5,6){\makebox(0,0)[cc]{\scriptsize $#7$}}
\put(20.5,4){\makebox(0,0)[l]{\scriptsize $#7$}}
\put(8,12){\makebox(0,0)[rb]{\scriptsize $#8$}}
\put(18,12){\makebox(0,0)[lb]{\scriptsize $#9$}}
\end{picture}
}
\def\dvespanext(#1,#2,#3,#4,#5,#6,#7){
\dvespanextnahore(#1,#2,#3,#4,#5,#6,#7,#6,#7)
}
\def\spanjennahore(#1,#2,#3,#4,#5){
\posun{3.5}
\unitlength 1.7mm
\begin{picture}(20,12)(3.5,5.5)
\thicklines
\put(11,13.5){\vector(-1,-1){3.5}}
\put(15,13.5){\vector(1,-1){3.5}}
\put(7,8){\makebox(0,0)[cc]{$#3$}}
\put(19,8){\makebox(0,0)[cc]{$#4$}}
\put(7,6.5){\vector(0,-1){5.5}}
\put(19,6.5){\vector(0,-1){5.5}}
\put(9,6){\vector(4,-3){8}}
\put(17,6){\vector(-4,-3){8}}
\put(7,0){\makebox(0,0)[t]{$#1$}}
\put(19,0){\makebox(0,0)[t]{$#2$}}
\put(13,14){\makebox(0,0)[b]{$#5$}}
\put(8,12){\makebox(0,0)[rb]{\scriptsize $s$}}
\put(18,12){\makebox(0,0)[lb]{\scriptsize $t$}}
\end{picture}
}
\def\jarca#1#2#3#4{
\unitlength 4mm \posunm{1.2}
\linethickness{0.4pt}
\begin{picture}(16,4)(1,9.25)
\thicklines
\put(8.5,12){\vector(1,0){5}}
\put(7,8){\vector(2,1){6.6}}
\put(6,11){\vector(0,-1){3}}
\put(6,12){\makebox(0,0)[cc]{$#1$}}
\put(15,12){\makebox(0,0)[cc]{$#2$}}
\put(6,7){\makebox(0,0)[cc]{$#3$}}
\put(5.5,9.5){\makebox(0,0)[rc]{\scriptsize $#4$}}
\put(11,9){\makebox(0,0)[cc]{\scriptsize $\gamma$}}
\put(11,12.25){\makebox(0,0)[bc]{\scriptsize $\cong$}}
\end{picture}
}
\def\pampeliska#1#2#3{
\posunm{.5}
\unitlength 1.6mm
\linethickness{0.4pt}
\ifx\plotpoint\undefined\newsavebox{\plotpoint}\fi % GNUPLOT compatibility
\begin{picture}(19,6.6)(6,10)
\thicklines
\put(11,14){\line(1,-1){4}}
\put(15,10){\line(1,1){4}}
\put(15,10){\line(-1,2){2}}
\put(15,10){\line(0,1){0}}
\put(15,10){\line(0,-1){3}}
\put(11,14.5){\makebox(0,0)[bc]{\scriptsize $#1$}}
\put(13,14.5){\makebox(0,0)[bc]{\scriptsize $#2$}}
\put(19,14.5){\makebox(0,0)[bc]{\scriptsize $#3$}}
\put(15,10){\makebox(0,0)[cc]{\scriptsize $\bullet$}}
\put(16,14){\makebox(0,0)[cc]{\scriptsize $\cdots$}}
\end{picture}
}
\def\tens{\mbox{$\odot\hskip-0.63em\star\hskip0.3em$}}
\def\II#1{{1_{#1}}}
\def\Fact_#1{{\EuScript Fct}_{#1}(O)}\def\Deltaalg{{\Delta_{\it alg}}}
\def\vlra{{\hbox{$-\hskip-1mm-\hskip-2mm\longrightarrow$}}}
\def\vvlra{{\hbox{$-\hskip-2mm-\hskip-2mm-\hskip-2mm-\hskip-2mm-\hskip-2mm-\hskip-2mm-\hskip-2mm-\hskip-2mm\longrightarrow$}}}
\def\UJarka{{\mathcal U}\Jarka}\def\eAss{\underline{{\mathcal As}}}
\def\fAss{\underline{{\mathcal Ass}}}
\def\rada#1#2{{#1,\ldots,#2}}
\def\BTree{{\it BTree}}
\def\ATree{{\it ATree}}
\def\calC{{\mathcal C}}
\def\calK{{\mathcal K}}
\def\euo{\mathscr{E}} 
\def\calL{{\mathcal L}}
\def\ttD{{\mathtt D}}
\def\funny{\EuScript{S}p_2}
\def\Und{{\mathcal U}}
\def\id{{\it id}}\def\Id{{\it Id}}
\def\nubar{{\overline \nu}}\def\mubar{{\overline \mu}}\def\ubar{{\overline u}}
\def\ttD{{\mathtt D}}
\def\1-span(#1,#2,#3,#4,#5){\dvespanext(#1,#2,#3,#4,#5,s,t)}
\def\boxx{{\hskip .2em \Box}}\def\wt{\widetilde}
\def\posun#1{{\raisebox{-#1em}{\rule{0pt}{0pt}}}}
\def\posunm#1{{\raisebox{-#1cm}{\rule{0pt}{0pt}}}}
\def\Chain{\EuScript Chain}\def\Set{\EuScript Set}
\def\calUV{{\mathcal U}\calA} 
\def\Gl{{{\EuScript G}\rm l}(\cat)}
\def\sfG{{\mathsf G}}\def\ttE{{\mathtt E}}\def\ttD{{\mathtt D}}\def\ot{\otimes}
\def\bbU{{\mathbb U}}%\def\funny{\mathscr{F}}
\def\duo{\mathscr{D}} 
 \def\Tam{{\mathcal T}\hskip-0.2em{am}} %Tamarkin operad
\def\ttF{{\mathtt F}}\def\ttG{{\mathtt G}}\def\tte{{\mathtt e}}
\def\coll#1{\{#1(n)\}_{n\geq 0}}
\def\Rada#1#2#3{#1_{#2},\dots,#1_{#3}}\def\Ob{{\rm Ob}}
\def\Jarka{\EuScript{J}}
\def\sfF{{\mathsf F}}\def\ttM{{\mathtt M}}     \def\ttN{{\mathtt N}}                     \def\bfhom{{\mathbf {hom}}}
\def\cat{\mathscr{C}}\def\calA{{\mathcal A}}
\def\id{{\it id}}
\def\End{\hbox{${\mathcal E}${\hskip -.1em\it nd\hskip .1em}}}
\def\EE{{\mathcal E}}\def\bfRhom{{\mathbf {Rhom}}}
\def\glb#1#2#3#4{{\rm glb}\left(#1,#2;#3,#4\right)}
\def\Cat{{\EuScript Cat}}\def\Mon{{\EuScript Mon}}
\def\Ol{\widetilde l}\def\Of{\widetilde f}\def\Og{\widetilde g}
\def\Oh{\widetilde h}\def\Ol{\widetilde l}\def\Sp{{\it Sp}}
\def\Ohf{\widetilde {\it hf}}\def\Olg{\widetilde{\it lg}}
\def\ll{\hbox{$\left( \posun{.4} \right. \hskip -.0em$}}
\def\rr{\hbox{$\hskip -.0em \left. \posun{.4} \right)$}}
\def\spn{span}\def\calU{{\mathcal U}}
\def\Gr{{\EuScript Grph}}
\def\cases#1#2#3#4{
                  \left\{
                         \begin{array}{ll}
                           #1,\ &\mbox{#2}
                           \\
                           #3,\ &\mbox{#4}
                          \end{array}
                   \right.
}
\def\jaruskaext#1#2#3#4#5#6{
\unitlength 3mm 
\thicklines
\begin{picture}(22,8)(5.5,1)
\thicklines
\put(11,0){
\put(19,7){\vector(1,-1){2}}
\put(17,7){\vector(-1,-1){4}}
\put(2.5,2){\vector(1,0){8}}
\put(15,4){\vector(1,0){5.5}}
\put(12,2){\makebox(0,0)[cc]{${\wt C}$}}
\put(22,4){\makebox(0,0)[cc]{${\wt C}$}}
\put(12.25,4){\line(1,0){.75}}
\put(15.73,8){\vector(1,0){1}}
\put(7.93,8){\line(1,0){.9}}
\put(9.73,8){\line(1,0){.9}}
\put(11.53,8){\line(1,0){.9}}
\put(13.33,8){\line(1,0){.9}}
\put(15.13,8){\line(1,0){.9}}
\put(18,8){\makebox(0,0)[cc]{$\ttG_C$}}
\put(6,2.3){\makebox(0,0)[b]{\scriptsize $\Oh$}}
\put(17.3,4.3){\makebox(0,0)[b]{\scriptsize $\Ol$}}
\put(15,6){\makebox(0,0)[cc]{\scriptsize $s$}}
\put(20,7){\makebox(0,0)[cc]{\scriptsize $t$}}
}
\put(6,7){\vector(-1,-1){4}}
\put(8,7){\vector(1,-1){2}}
\put(19,7){\vector(1,-1){2}}
\put(17,7){\vector(-1,-1){4}}
\put(2.25,2){\vector(1,0){8}}
\put(15,4){\vector(1,0){5.5}}
\put(1,2){\makebox(0,0)[cc]{$#1$}}
\put(12,2){\makebox(0,0)[cc]{$#2$}}
\put(22,4){\makebox(0,0)[cc]{$#2$}}
\put(11,4){\makebox(0,0)[cc]{$#1$}}
\put(12.25,4){\line(1,0){.75}}
\put(15.73,8){\vector(1,0){1}}
\put(7.93,8){\line(1,0){.9}}
\put(9.73,8){\line(1,0){.9}}
\put(11.53,8){\line(1,0){.9}}
\put(13.33,8){\line(1,0){.9}}
\put(15.13,8){\line(1,0){.9}}
%\end
\put(7,8){\makebox(0,0)[cc]{$#3$}}
\put(18,8){\makebox(0,0)[cc]{$#4$}}
\put(6,2.3){\makebox(0,0)[b]{\scriptsize $#5$}}
\put(17.3,4.3){\makebox(0,0)[b]{\scriptsize $#6$}}
\put(4,6){\makebox(0,0)[cc]{\scriptsize $s$}}
\put(15,6){\makebox(0,0)[cc]{\scriptsize $s$}}
\put(9,7){\makebox(0,0)[cc]{\scriptsize $t$}}
\put(20,7){\makebox(0,0)[cc]{\scriptsize $t$}}
\end{picture}
}
\def\pullback#1#2#3#4#5#6{
\posun{3}
\unitlength 2.8mm
\begin{picture}(16,6.5)(2,7.57)
\thicklines
\put(9.5,12){\vector(-1,-1){3}}
\put(12.5,12){\vector(1,-1){3}}
\put(6.5,7){\vector(1,-1){3}}
\put(15.5,7){\vector(-1,-1){3}}
\thinlines
\put(11,10){\line(-1,1){1.3}}
\put(11,10){\line(1,1){1.3}}
\put(11,3){\makebox(0,0)[cc]{$#1$}}
\put(8,5){\makebox(0,0)[rt]{\scriptsize $#2$}}
\put(14,5){\makebox(0,0)[lt]{\scriptsize $#3$}}
\put(6,8){\makebox(0,0)[cc]{$#4$}}
\put(16,8){\makebox(0,0)[cc]{$#5$}}
\put(11,13){\makebox(0,0)[cc]{$#6$}}
\end{picture}
}
\def\jaruska#1#2#3#4#5#6{
\unitlength 3mm 
\thicklines
\begin{picture}(22,8)(0,2)
\thicklines
\put(6,7){\vector(-1,-1){4}}
\put(8,7){\vector(1,-1){2}}
\put(19,7){\vector(1,-1){2}}
\put(17,7){\vector(-1,-1){4}}
\put(2.5,2){\vector(1,0){8}}
\put(15,4){\vector(1,0){5.5}}
\put(1,2){\makebox(0,0)[cc]{$#1$}}
\put(12,2){\makebox(0,0)[cc]{$#2$}}
\put(22,4){\makebox(0,0)[cc]{$#2$}}
\put(11,4){\makebox(0,0)[cc]{$#1$}}
\put(12.25,4){\line(1,0){.75}}
\put(15.73,8){\vector(1,0){1}}
\put(7.93,8){\line(1,0){.9}}
\put(9.73,8){\line(1,0){.9}}
\put(11.53,8){\line(1,0){.9}}
\put(13.33,8){\line(1,0){.9}}
\put(15.13,8){\line(1,0){.9}}
%\end
\put(7,8){\makebox(0,0)[cc]{$#3$}}
\put(18,8){\makebox(0,0)[cc]{$#4$}}
\put(6,2.3){\makebox(0,0)[b]{\scriptsize $#5$}}
\put(17.3,4.3){\makebox(0,0)[b]{\scriptsize $#6$}}
\put(4,6){\makebox(0,0)[cc]{\scriptsize $s$}}
\put(15,6){\makebox(0,0)[cc]{\scriptsize $s$}}
\put(9,7){\makebox(0,0)[cc]{\scriptsize $t$}}
\put(20,7){\makebox(0,0)[cc]{\scriptsize $t$}}
\end{picture}
}
\def\globe#1#2#3#4{\raisebox{-1.2em}{\rule{0pt}{0pt}}
{% Picture saved by xtexcad 2.4
\unitlength=.05em
\begin{picture}(115.00,20.00)(-20.00,18.00)
\put(40.00,4.00){\makebox(0.00,0.00)[t]{\scriptsize$#4$}}
\put(40.00,43.00){\makebox(0.00,0.00)[b]{\scriptsize$#3$}}
\put(78.00,25.00){\makebox(0.00,0.00)[l]{$#2$}}
\put(2,25.00){\makebox(0.00,0.00)[r]{$#1$}}
\put(70.00,20.00){\vector(1,1){0}}
\put(70.00,30.00){\vector(1,-1){0}}
\qbezier(10.00,30.00)(20.00,40.00)(40.00,40.00)
\qbezier(40.00,40.00)(60.00,40.00)(70.00,30.00)
\qbezier(40.00,10.00)(60.00,10.00)(70.00,20.00)
\qbezier(10.00,20.00)(20.00,10.00)(40.00,10.00)
\end{picture}}
}
\def\ssglobe#1#2#3#4{\raisebox{-1.2em}{\rule{0pt}{0pt}}
{% Picture saved by xtexcad 2.4
\unitlength=.03em
\begin{picture}(115.00,20.00)(-20.00,18.00)
\put(40.00,4.00){\makebox(0.00,0.00)[t]{\scriptsize${}_{#4}$}}
\put(40.00,43.00){\makebox(0.00,0.00)[b]{\scriptsize${}_{#3}$}}
\put(78.00,25.00){\makebox(0.00,0.00)[l]{\scriptsize$#2$}}
\put(2,25.00){\makebox(0.00,0.00)[r]{\scriptsize$#1$}}
\put(70.00,20.00){\vector(3,2){0}}
\put(70.00,30.00){\vector(3,-2){0}}
\qbezier(10.00,30.00)(20.00,40.00)(40.00,40.00)
\qbezier(40.00,40.00)(60.00,40.00)(70.00,30.00)
\qbezier(40.00,10.00)(60.00,10.00)(70.00,20.00)
\qbezier(10.00,20.00)(20.00,10.00)(40.00,10.00)
\end{picture}}\hskip .2em
}
\def\kategorie#1#2{
{% Picture saved by xtexcad 2.4
  \unitlength=.05em
\begin{picture}(50.00,40.00)(0.00,10.00)
\thicklines
\put(50.00,30.00){\makebox(0.00,0.00)[l]{\scriptsize$#1$}}
\put(0.00,30.00){\makebox(0.00,0.00)[r]{\scriptsize$#2$}}
\put(3,0){
\put(30.00,10.00){\vector(-1,-1){0}}
\qbezier(40.00,30.00)(40.00,20.00)(30.00,10.00)
\qbezier(30.00,50.00)(40.00,40.00)(40.00,30.00)
}
\put(-3,0){
\put(20.00,10.00){\vector(1,-1){0}}
\qbezier(10.00,30.00)(10.00,20.00)(20.00,10.00)
\qbezier(20.00,50.00)(10.00,40.00)(10.00,30.00)
}
\end{picture}}
}
\theoremstyle{definition}
\newtheorem{definition}[theorem]{Definition}
\newtheorem{example}[theorem]{Example}
\newtheorem{convention}[theorem]{Convention}
\newtheorem{remark}[theorem]{Remark}
\newtheorem{observation}[theorem]{Observation}
\title[Centers and homotopy centers]{Centers and homotopy centers in enriched monoidal categories.}
\author[M.~Batanin and M.~Markl]{Michael Batanin and Martin~Markl}
\thanks{The first author acknowledges  the financial
support of Scott Russel Johnson Memorial Foundation, Max Plank
Institut f\"{u}r Mathematik and Australian Research Council (grant
No.~DP1095346). The second author was supported by the grant GA \v CR
201/08/0397 and by the Academy of Sciences of the Czech Republic,
Institutional Research Plan No.~AV0Z10190503.}
\address{Macquarie University, NSW 2109, Australia}
\email{michaelbatanin@mq.edu.au}
\address{Mathematical Institute of the Academy, {\v Z}itn{\'a} 25,
         115 67 Prague 1, The Czech Republic}
\email{markl@math.cas.cz}
\keywords{Monoidal categories, center, Hochschild complex, operads, Deligne's conjecture} 
\subjclass{Primary 18D10,18D20, 18D50, secondary 55U40, 55P48.}
\begin{document}
\bibliographystyle{plain}

\begin{abstract}
We consider a theory of centers and homotopy centers of monoids in
monoidal categories which themselves are enriched in duoidal
categories. The duoidal categories (introduced by Aguillar and
Mahajan under the name $2$-monoidal categories) are categories with
two monoidal structures which are related by some, not
necessary invertible, coherence morphisms.  Centers of monoids in this sense
include many examples which are not `classical.' In particular, the
$2$-category of categories is an example of a center in our
sense. Examples of homotopy center (analogue of the classical Hochschild
complex) include the $\mathbf{Gray}$-category $\mathbf{Gray}$ of
$2$-categories, $2$-functors and pseudonatural transformations and
Tamarkin's homotopy $2$-category of $dg$-categories, $dg$-functors and
coherent $dg$-transformations.

\end{abstract}

\maketitle
\setcounter{tocdepth}{1}
{\small \tableofcontents}

\baselineskip 17pt plus 2pt minus 1 pt

\section{Introduction}

This paper grew up from our attempts to comprehend a construction by D.~Tamarkin
in~\cite{tamarkin:CM07} which answers a question: what do
$dg$-categories, $dg$-functors, and coherent up to all higher
homotopies $dg$-transformations, form? In the process we discovered
that the most natural language which allows easy development of such a
construction is a generalization to the enriched categorical context of the
classical Hochschild complex theory for algebras. Enrichment,
however, should be understood in a more general sense.  In this paper
we therefore want to set up some basic definitions and constructions
of the proposed theory of enrichment and the corresponding theory of
centers and the Hochschild complexes. In the sequel
\cite{batanin-berger-markl:homotopycenter} we will consider
homotopical aspects of the theory of the Hochschild complexes. The higher
dimensional generalization of our theory will be addressed in yet
another paper.

Classically, the Hochschild complex of an associative algebra can be
understood as its derived or homotopical center.  Our theory
generalizes this point of view by extending the notions of center and
homotopy center to a much larger class of monoids. Of course, the
classical center construction and the Hochschild complex are special
cases of the center in our sense. But, perhaps, the most striking
feature of our theory is that the $2$-category of categories is an
example of our center construction as well.  An example of a
homotopy center is then the symmetric monoidal closed category
$\mathbf{Gray}$ of $2$-categories, $2$-functors and pseudonatural
transformations \cite{GPS}.  Tamarkin's homotopy 2-category of
$dg$-categories, $dg$-functors and their coherent natural
transformations is also an example of the homotopy center.  In some
philosophical sense, we have here a new understanding of the center as a
universal method for (higher dimensional) enrichment.
Other nontrivial examples of duoidal categories and centers are
presented in the lecture notes of Ross Street concerning invariants of
$3$-dimensional manifolds~\cite{Street}.

Let us now provide more detail about where we enrich. Classically, we
can enrich over any monoidal category $\duo.$ However, monoidal
$\duo$-enriched categories make sense only if $\duo$ has some degree
of commutativity, more precisely, we need $\duo$ to be braided. It was
observed by Forcey in \cite{Forcey} that we can slightly weaken this
requirement. It is enough for $\duo$ to be $2$-fold monoidal in his
sense.  Even Forcey's conditions can be weaken. It is enough for
$\duo$ to be $2$-monoidal in Aguillar-Mahajan sense \cite{AM}.  In our
paper we call such a $\duo$ a {\it duoidal category}.\footnote{This 
terminology was proposed by Ross Street and we found it very convenient. 
The terminology of \cite{AM} suffers from the existence of a
similarly sounding terminology of Balteanu, Fiedorowicz, Schw\"anzl, and
Vogt \cite{BFSV}, and Forcey \cite{Forcey}.} In a duoidal category we
have two tensor products with the corresponding unit objects making $\duo$
a monoidal category in two different ways. In addition, we require that
these two tensor products are related by a not necessary invertible
middle interchange law and that the unit objects also satisfy some interesting
coherence relations.

Let $\duo$ be such a duoidal category. To incorporate the theory of the
Hochschild complex, we also assume that $\duo$ itself is enriched over
a base closed symmetric monoidal category $V.$ In this situation one
can consider a monoidal category $\calK$ enriched in $\duo$ (and the
underlying category of $\calK$ is a monoidal $V$-category). Then we
could define a monoid in $\calK$ in a usual way as an object equipped
with a unit and an associative multiplication. But, unexpectedly, such
a monoid notion is in general not correct -- it amounts to a monoid in
the underlying monoidal category.  We introduce a new notion of a
monoid in $\calK$ by adding  more unitary operations which makes all
theory nontrivial (all these operations coincide if $\duo$ is a braided
monoidal category, so their multitude is not visible in the classical theory). Then for
a monoid $\ttM$ in $\calK$ we define a cosimplicial object in $\duo$
which is an analogue of the classical Hochschild cosimplicial complex of an
algebra.  If we fix a cosimplicial object $\delta$ in $V$, we can take
a kind of geometric realization of the cosimplicial Hochschild
complex. This is an object $\CH_{\delta}(\ttM,\ttM)$ from $\duo$ and
this is our definition of the $\delta$-center of a monoid. If $\delta$ is
the constant object $\delta_n = I$ (the unit of $V$), then the
$\delta$-center is called the center.  Notice that the $\delta$-center of a
monoid lives in $\duo$, not in $\calK.$ When $V, \duo$ and $\calK$ have
compatible model structures, we also define a~homotopy center
$\CH(\ttM,\ttM)$ of $\ttM$ by using an appropriate cofibrant and
contractible $\delta$ and a~fibrant replacement of $\ttM$.

In the classical theory we know that the center of a monoid is a
commutative monoid. We have an analogue of this statement in our
settings: the center of a monoid in $\calK$ is a duoid (double monoid
in the terminology of \cite{AM}) in $\duo.$ The homotopy version of
this statement is the following: there is a canonical action of a
contractible $2$-operad on the homotopy center of a monoid in $\calK
.$ This is an analogue of Deligne's conjecture for the classical
Hochschild complex. Tamarkin's main result from \cite{tamarkin:CM07}
is a special case of this Deligne's conjecture applied to a particular
monoid in a monoidal category $\Jarka(O,\Chain)$ constructed in
Section~\ref{sec:tamark-compl-funct-1}.  The classical Deligne's
conjecture follows from this statement by a theorem from
\cite{batanin:conf} if $\duo$ is a symmetric monoidal category.  In
this paper we set up a version of the theory of $2$-operads which
allows a precise formulation of such a statement. A proof of this form
of Deligne's conjecture will be given
in~\cite{batanin-berger-markl:homotopycenter}.

Finally, let us say a few words about possible further directions.
One interesting and almost obvious possibility is to replace duoidal
categories by $n$-oidal categories. An $n$-oidal category is a
category with $n$ monoidal structures related by interchange morphisms
and various coherence morphisms between unit objects which satisfy
some coherence relations~\cite{AM}.  Many results of our paper admit
more or less obvious generalization to the $n$-oidal case. In particular,
we can consider $n$-oids in $n$-oidal categories and centers and
homotopy centers of $n$-oids.

\begin{conjecture}[$(n+1)$-oidal Deligne's conjecture]
There is a canonical action of a contractible $(n+1)$-operad on the
homotopy center of an $n$-oid $\ttN$ which lifts the $(n+1)$-oid
structure on the center of $\ttN.$
\end{conjecture}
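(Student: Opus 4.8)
The plan is to reduce the conjecture to the combination of a strict, purely algebraic statement and a homotopical comparison, exactly as for the duoidal ($n=1$) case treated in~\cite{batanin-berger-markl:homotopycenter}, which serves as the base of an induction on $n$. First I would attach to an $n$-oid $\ttN$ an \emph{$n$-cosimplicial} Hochschild object $\CH^{\bullet}(\ttN,\ttN)$: each of the $n$ tensor products of the ambient $n$-oidal category furnishes one bar-type cosimplicial direction, while the middle interchange morphisms together with the unit coherences supply the faces and degeneracies in the mixed directions. The strict center is then the multiple totalization with respect to the constant $\delta=I$, and the homotopy center is obtained by realizing against an $n$-fold cofibrant contractible $\delta$ and a fibrant replacement of $\ttN$.

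The strict part --- that the center carries an $(n+1)$-oid structure --- should follow by a direct, if lengthy, diagram chase generalizing the duoid computation. The $n$ multiplications come from the $n$ tensor products of the base, and the \emph{extra} $(n+1)$-st multiplication is the cup-type product induced by the self-enrichment that the Hochschild construction builds in; the $\binom{n+1}{2}$ interchange laws are then verified from the interchange and unit relations of the $n$-oidal structure. This step identifies precisely the $(n+1)$-oid structure that the operad action is required to lift, and pins down its compatibility with the cosimplicial faces in the strict ($\delta=I$) situation.

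For the homotopical statement I would construct an explicit contractible $(n+1)$-operad $\mathcal{O}_{n+1}$ out of the combinatorics of $(n+1)$-ordinals (Batanin trees) and exhibit its action on the realized $n$-cosimplicial object, generalizing the McClure--Smith-type description used for the classical Hochschild complex. The key input is a multi-cosimplicial analogue of the totalization theorem: totalizing an $n$-cosimplicial object equipped with the above family of compatible multiplications produces an algebra over $\mathcal{O}_{n+1}$, with the $(n+1)$-st operadic direction created by the totalization itself. Contractibility of $\mathcal{O}_{n+1}$ is then forced by the contractibility and cofibrancy of $\delta$ and by the compatibility of the model structures on $V$, on the $n$-oidal category, and on $\calK$; restricting to $\delta=I$ collapses the action onto the strict $(n+1)$-oid structure of the previous paragraph, which is what ``lifts'' means here.

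The main obstacle is this multi-cosimplicial totalization theorem and, with it, the explicit contractible $(n+1)$-operad. In dimension one the analogous step rests on the condensation and symmetrization theorem of~\cite{batanin:conf}, comparing $2$-operads with symmetric operads, which is what ultimately deduces the classical Deligne conjecture; what is missing for general $n$ is precisely the higher-dimensional version of this comparison, controlling how the many interchange morphisms of an $n$-oidal category assemble into the $(n+1)$-operadic coherences after totalization. I expect the inductive step itself to be the delicate point: passing from $n$ to $n+1$ adds one cosimplicial direction together with a whole new layer of interchange constraints, and verifying that the resulting operad stays \emph{contractible}, rather than merely nonempty, is where the real work --- and any genuinely new idea --- will be needed.
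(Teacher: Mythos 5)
You should first be aware that the statement you set out to prove is labeled a \emph{Conjecture} in the paper, and the paper contains no proof of it: even the base case $n=1$ (the duoidal Deligne conjecture, Theorem~\ref{duoidalDeligne} and Corollary~\ref{homotopicalDeligne}, resting on the action of Theorem~\ref{condensation}) is explicitly deferred to the sequel \cite{batanin-berger-markl:homotopycenter}, and the authors say only that they ``hope to address the proof of these conjectures in the near future.'' So there is no proof in the paper to match your proposal against; the only question is whether your sketch itself closes the gap, and it does not. Your architecture is a faithful extrapolation of the paper's $n=1$ machinery --- an $n$-cosimplicial Hochschild object, $\delta$-totalization, a strict $(n+1)$-oid structure on the center generalizing Theorem~\ref{Za_pet_dni_stehovani.}, and a contractible higher operad acting on the realization --- but the two steps you flag as ``the main obstacle'' (the multi-cosimplicial totalization theorem and the explicit contractible $(n+1)$-operad $\mathcal{O}_{n+1}$) are not side issues: they \emph{are} the conjecture. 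Asserting their existence and then claiming contractibility is ``forced'' by cofibrancy of $\delta$ restates the statement rather than proving it.

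Two points where your sketch is concretely misaligned with what the paper's $n=1$ case shows would be needed. First, contractibility is \emph{not} automatic from a cofibrant contractible $\delta$ and compatible model structures: already for $n=1$, Theorem~\ref{duoidalDeligne} requires the additional hypothesis that the lattice path operad be strongly $\delta$-reductive in the sense of \cite[Definition~3.7]{batanin-berger}, so your claim that contractibility follows from model-structure compatibility alone would fail without a higher analogue of that reductivity condition. Second, the paper's contractible $2$-operad is not produced by a McClure--Smith-type totalization theorem; it is $\Coend_{\Tam_2}(\delta)$, obtained by \emph{condensation} of Tamarkin's colored $2$-operad $\Tam_2$ from \cite{tamarkin:CM07} via the techniques of \cite{batanin-berger}, and the symmetrization comparison of \cite{batanin:conf} that you invoke enters only afterwards, to deduce the classical ($E_2$) Deligne conjecture in the symmetric case (Theorem~\ref{Deligne}) --- it plays no role in the duoidal statement itself. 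Finally, your framing as an induction on $n$ has no mechanism behind it: $n$-operads here are indexed by trees in $\Omega_{n+1}$ (cf.\ the $\Omega_2$ combinatorics of Section~\ref{sec:deligne-conj-mono}), whose structure does not decompose as ``the previous case plus one more cosimplicial direction,'' and even the strict part --- that the center of an $n$-oid is an $(n+1)$-oid --- is itself unproved in the paper beyond $n=1$. Your last paragraph honestly identifies all of this, which is to your credit, but it means the proposal is a research program consistent with the paper's stated expectations, not a proof.
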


Analogously to Tamarkin's theorem, this conjecture answers a question:
what do $n$-categories enriched in a symmetric monoidal model category
$V$ form?  This conjecture should imply also the $n$-dimensional form
of the classical Deligne conjecture \cite{Kontsevich} via the results
of~\cite{batanin:AM08,batanin:conf}. We hope to address the proof of
these conjectures in the near future.

Another very interesting direction is a construction of the so-called
semistrict $n$-categories. In the theory of higher dimensional
categories it is highly desirable to have some sort of a minimal model 
of the theory of weak $n$-categories.  Many important statements in higher
category theory, like the equivalences amongst almost all definitions of weak
$n$-categories, or the Grothendieck hypothesis on algebraic models of
$n$-homotopy types \cite{C}, will follow naturally once we have at
hands a well developed theory of semistrict $n$-categories.  So far,
however, a good notion of semistrict $n$-category is known only for
$n\le 3.$ For $n=2$, it is the category of strict $2$-categories. For
$n=3$ it is the category of $\mathbf{Gray}$-categories
\cite{GPS}. Both these categories are examples of enrichment over a
closed symmetric monoidal category which comes from our homotopical
center construction (see Examples \ref{2cat} and \ref{Graycat}).
Combining the results of \cite{BCW1} with the approach of our paper, we
hope to be able to construct an analogue of the Gray tensor product for
all dimensions and therefore a good theory of semistrict
$n$-categories. This is currently a work in progress with M.~Weber and
D.-C.~Cisinski~\cite{BCW2}.

\noindent
{\bf Acknowledgment.}  
We would like to express our thanks to  C.~Berger, D.-C.~Cisinski, 
S.~Lack, R.~Street, D.~Tamarkin and M.~Weber for many enlightening  discussions. 

\section{Monoidal $V$-categories and duoidal $V$-categories}

We fix from the beginning a complete and cocomplete closed symmetric
monoidal category $(V,\otimes,I)$. Its underlying category is denoted
$\calU V.$ For objects $X,Y$ of an $V$-enriched category $\calA$, we
denote by $\calA(X,Y)\in V$ the enriched hom and by
$\calU\calA(X,Y) := \calU V\big(I,\calA(X,Y)\big)$ the set of homomorphism in the
underlying category.

\subsection{Monoidal $V$-categories.}
It is classical~\cite{eilenberg-kelly} that $V$-categories, $V$-functors
and $V$-natural transformations form a $2$-category $\Cat(V).$
Moreover, this $2$-category is a symmetric monoidal $2$-category with
respect to the tensor product $\times_V$ of $V$-categories:
\begin{align*}
Ob(K\times_V L) &:= Ob(K)\times Ob(L),
\\
(K\times_V L)\big((X,Y),(Z,W)\big) &:= K(X,Z)\otimes_V L(Y,W).
\end{align*}
The unit for this tensor product is the category $1$ which has one
object $*$ and $1(*,*) = I$.

When $V=\Set$ we will use the notation $\Cat$ for $\Cat(V).$ The
underlying category functor provides then a symmetric lax-monoidal
2-functor
\[
\calU: \Cat(V)\to \Cat.
\]

Recall that, for any monoidal $2$-category or, more generally, for a
monoidal bicategory there exists a concept of a pseudomonoid, i.e.~of an
object equipped with a coherently associative multiplication and a
coherent unit which generalizes the notion of a monoidal category
\cite{Lack}.

\begin{definition} 
A {\em monoidal $V$-category\/} is a pseudomonoid in $\Cat(V)$.
\end{definition}

So, the definition is the usual definition of a monoidal category, but
we require the tensor product to be a $V$-functor and the coherence constraint
to be $V$-natural.

\begin{definition} 
A lax-monoidal $V$-functor between monoidal
$V$-categories 
$K = (K,\boxx_K, e_K)$ and $L= (L,\boxx_L,e_L)$ consists of 
\begin{itemize} \item[(i)] a $V$-functor $F:K \to L,$  
\item[(ii)]  
a $V$-natural transformation
$$\phi: F(X)\boxx_L F(Y) \to F(X\boxx_K Y)$$
and a morphism
$$ \phi_e: e_L\to F(e_K)$$ \end{itemize}
which satisfy the usual coherence conditions.
\end{definition}

A lax-monoidal functor is called {\it strong monoidal } if $\phi$ and $\phi_e$ are isomorphisms and it is called {\it strict monoidal } if they are identities.  

Monoidal $V$-categories, lax-monoidal $V$-functors and their monoidal
$V$-transformations form a $2$-category $1\Cat_{lax}(V).$ It is a
monoidal $2$-category with respect to the tensor product~$\times_V.$
Analogously, we have monoidal $2$-subcategories
$$1\Cat_{strict}(V)\subset 1\Cat(V)\subset 1\Cat_{lax}(V)$$ of strict
monoidal and strong monoidal functors.

\subsection{Duoidal $V$-categories}

\begin{definition} 
\label{sec:spn-mono-categ-1}
A {\em duoidal $V$-category} is a pseudomonoid in $1\Cat_{lax}(V)$. 
Explicitly, 
a  duoidal $V$-category is a quintuple $\duo = 
(\duo,\boxx_0,\boxx_1,e,v)$ such that
\begin{itemize}
\item [(i)]
$(\duo,\boxx_0,e)$ and $(\duo,\boxx_1,v)$ are monoidal $V$-categories, equipped with 
\item[(ii)]
 a $V$-natural interchange transformation
\[
(X \boxx_1 Y) \boxx_0 (Z \boxx_1 W) \to 
(X \boxx_0 Z) \boxx_1 (Y \boxx_0 W),
\]
\item[(iii)]
a map 
\begin{equation*}
\label{e}
e \to e\boxx_1 e, 
\end{equation*}
\item[(iv)] 
a map
\begin{equation*}
\label{v}
v\boxx_0 v \to v,
\end{equation*}
\item[(v)] 
and a map 
\[
e\to v.
\]
\end{itemize}
The above data enjoy the coherence properties listed 
in~\cite[Definition~2.1]{AM}, namely the {\em associativity\/} meaning
that the diagrams
\def\diagone#1#2#3#4#5#6{
{% Picture saved by xtexcad 2.4
\unitlength=1.000000pt
\begin{picture}(180.00,104.00)(0.00,-10.00)
\thicklines
\put(200.00,30.00){\vector(0,-1){20.00}}
\put(-20.00,30.00){\vector(0,-1){20.00}}
\put(200.00,70.00){\vector(0,-1){20.00}}
\put(-20.00,70.00){\vector(0,-1){20.00}}
\put(73.00,0.00){\vector(1,0){34.00}}
\put(73.00,80.00){\vector(1,0){34.00}}
\put(200.00,0.00){\makebox(0.00,0.00){$#6$}}
\put(200.00,40.00){\makebox(0.00,0.00){$#4$}}
\put(200.00,80.00){\makebox(0.00,0.00){$#2$}}
\put(-20.00,0.00){\makebox(0.00,0.00){$#5$}}
\put(-20.00,40.00){\makebox(0.00,0.00){$#3$}}
\put(-20.00,80.00){\makebox(0.00,0.00){$#1$}}
\end{picture}}
}
\[
\diagone{\big((A \boxx_1 B)\boxx_0 (C\boxx_1D)\big)\boxx_0(E\boxx_1F)}%
        {(A \boxx_1 B)\boxx_0 \big((C\boxx_1D)\boxx_0(E\boxx_1F)\big)}%
        {\big((A \boxx_0 C)\boxx_1 (B\boxx_0D)\big)\boxx_0(E\boxx_1F)}%
        {(A \boxx_1 B)\boxx_0 \big((C\boxx_0E)\boxx_1(D\boxx_0F)\big)}%
{\big((A\boxx_0 C) \boxx_0E\big) \boxx_1 \big((B \boxx_0 D)\boxx_0 F \big)}%
{\big(A\boxx_0 (C \boxx_0E)\big) \boxx_1 \big(B \boxx_0 (D\boxx_0 F )\big)}
\]
\[
\diagone{\big((A\boxx_1 B) \boxx_1C\big)\boxx_0\big((D\boxx_1E)\boxx_1F \big)}%1
        {\big(A\boxx_1(B\boxx_1C)\big)\boxx_0\big(D\boxx_1(E\boxx_1 F)\big)}%2
        {\big((A \boxx_1 B)\boxx_0 (D\boxx_1E)\big)\boxx_1(C\boxx_0F)}%3
        {(A \boxx_0 D)\boxx_1 \big((B\boxx_1C)\boxx_0(E\boxx_1F)\big)}%4
{\big((A \boxx_0 D)\boxx_1 (B\boxx_0E)\big)\boxx_1(C\boxx_0F)}%5
        {(A \boxx_0 D)\boxx_1 \big((B\boxx_0E)\boxx_1(C\boxx_0F)\big)}%6
\]
commute, and the {\em unitality\/} meaning the commutativity of
\def\diagtwo#1#2#3#4{
{% Picture saved by xtexcad 2.4
\unitlength=1.000000pt
\begin{picture}(200.00,59.00)(-45.00,-6.00)
\thicklines
\put(100.00,0.00){\makebox(0.00,0.00){$#4$}}
\put(-10.00,0.00){\makebox(0.00,0.00){$#3$}}
\put(100.00,40.00){\makebox(0.00,0.00){$#2$}}
\put(-10.00,40.00){\makebox(0.00,0.00){$#1$}}
\put(10.00,0.00){\vector(1,0){37.00}}
\put(100.00,30.00){\vector(0,-1){20.00}}
\put(-10.00,10.00){\vector(0,1){20.00}}
\put(25.00,40.00){\vector(1,0){22.00}}
\end{picture}}
}
\def\diagtwobis#1#2#3#4{
{% Picture saved by xtexcad 2.4
\unitlength=1.000000pt
\begin{picture}(200.00,59.00)(-45.00,-6.00)
\thicklines
\put(100.00,0.00){\makebox(0.00,0.00){$#4$}}
\put(-10.00,0.00){\makebox(0.00,0.00){$#3$}}
\put(100.00,40.00){\makebox(0.00,0.00){$#2$}}
\put(-10.00,40.00){\makebox(0.00,0.00){$#1$}}
\put(10.00,0.00){\vector(1,0){37.00}}
\put(100.00,10.00){\vector(0,1){20.00}}
\put(-10.00,10.00){\vector(0,1){20.00}}
\put(47.00,40.00){\vector(-1,0){22.00}}
\end{picture}}
}
\[
\diagtwo{e \boxx_0(A\boxx_1B)}{(e\boxx_1 e) \boxx_0 (A\boxx_1 B)}%
        {A\boxx_1B}{(e \boxx_0A)\boxx_1(e\boxx_0 B)}
\diagtwo{(A\boxx_1B)\boxx_0e}{(A\boxx_1 B)\boxx_0(e\boxx_1 e) }%
        {A\boxx_1B}{(A \boxx_0e)\boxx_1(B\boxx_0 e)}
\]
\[
\diagtwobis{v \boxx_1(A\boxx_0B)}{(v\boxx_0 v) \boxx_1 (A\boxx_0 B)}%
        {A\boxx_0B}{(v \boxx_1A)\boxx_0(v\boxx_1 B)}
\diagtwobis{(A\boxx_0B)\boxx_1v}{(A\boxx_0 B)\boxx_1(v\boxx_0 v) }%
        {A\boxx_0B}{\hphantom{.}(A \boxx_1v)\boxx_0(B\boxx_1 v).}
\]
In the above diagrams, $A, \ldots,F$ are objects of $\duo$ and the
arrows are induced by the structure operations of $\duo$ in an obvious
way. Moreover, we require the units $e$, $v$ to be {\em compatible\/}
in the sense that $v$ is a monoid in $(\duo,\boxx_0,e)$ and $e$ a
comonoid in $(\duo,\boxx_1,v)$.
\end{definition}

\begin{remark}
Observe that (v) is redundant as the interchange map (ii) with $A=D =
e$ and $B = C =v$ gives exactly (v).
\end{remark}

\begin{definition} A duoidal category $\duo$ is called 
{\em strict\/} if both monoidal categories
$(\duo,\boxx_0,e)$ and $(\duo,\boxx_1,v)$ are strict monoidal categories.
\end{definition}

\begin{example} Pseudomonoids in $1\Cat(V)$ are the same as  braided monoidal $V$-categories \cite{JS}.  Any braided monoidal $V$-category can be considered as a duoidal $V$-category in which two tensor products and two units coincide.   
\end{example}

\begin{example} 
The iterated $2$-monoidal categories of
Balteanu-Fiedorowicz-Schw\"anzl-Vogt \cite{BFSV} are strict duoidal
categories for which $e=v.$
\end{example}

\begin{example} 
Forcey's $2$-fold monoidal categories are duoidal categories for which
(iii) and (iv) are isomorphisms. 
\end{example}

\begin{example} 
If $\duo$ is a duoidal $V$-category, then its underlying category
$\calU(\duo)$ is a duoidal $\Set$-category which we simply call a duoidal
category. The duoidal categories are exactly the $2$-monoidal
categories in the original sense of \cite{AM}.
\end{example}

\begin{definition} 
A {\em lax-duoidal $V$-functor\/} between duoidal $V$-categories   $\duo = 
(\duo,\boxx_0,\boxx_1,e,v)$ and $\duo' = 
(\duo',\boxx'_0,\boxx'_1,e',v')$ consists of 
\begin{itemize} 
\item[(i)] 
a $V$-functor $F:\duo \to \duo',$  
\item[(ii)]  
$V$-natural transformation
$$\phi: F(A)\boxx'_0 F(B) \to F(A\boxx_0 B)$$
and a morphism
$$ \phi_e: e'\to F(e)$$
which makes $F$ a lax-monoidal functor from $
(\duo,\boxx_0,e)$ to $
(\duo',\boxx'_0,e')$,
\item[(iii)]  
a $V$-natural transformation
$$\gamma: F(A)\boxx'_1 F(B) \to F(A\boxx_1 B)$$
and a morphism
$$ \gamma_v: v'\to F(v)$$
which makes $F$ a lax-monoidal functor from $
(\duo,\boxx_1,v)$ to $
(\duo',\boxx'_1,v')$
\end{itemize}
and which enjoy coherence properties from 
\cite[Definition~6.44]{AM}. Namely, we require the commutativity of
the diagrams
\def\diagonemod#1#2#3#4#5#6{
{% Picture saved by xtexcad 2.4
\unitlength=1.000000pt
\begin{picture}(180.00,104.00)(-10.00,-10.00)
\thicklines
\put(200.00,30.00){\vector(0,-1){20.00}}
\put(-20.00,30.00){\vector(0,-1){20.00}}
\put(200.00,70.00){\vector(0,-1){20.00}}
\put(-20.00,70.00){\vector(0,-1){20.00}}
\put(53.00,0.00){\vector(1,0){74.00}}
\put(80.00,80.00){\vector(1,0){53.00}}
\put(200.00,0.00){\makebox(0.00,0.00){$#6$}}
\put(200.00,40.00){\makebox(0.00,0.00){$#4$}}
\put(200.00,80.00){\makebox(0.00,0.00){$#2$}}
\put(-20.00,0.00){\makebox(0.00,0.00){$#5$}}
\put(-20.00,40.00){\makebox(0.00,0.00){$#3$}}
\put(-20.00,80.00){\makebox(0.00,0.00){$#1$}}
\end{picture}}
}
\[
\diagonemod{\big((F(A) \boxx_1' F(B)\big)\boxx_0'\big((F(C) \boxx_1'F(D)\big)}%
        {F(A \boxx_1B)\boxx_0' F(C \boxx_1D)}%
        {\big((F(A) \boxx_0' F(C)\big)\boxx_1'\big((F(B) \boxx_0'F(D)\big)}%
        {F\big((A \boxx_1B)\boxx_0 (C \boxx_1D)\big)}%
        {(F(A \boxx_0 C)\boxx_1'(F(B \boxx_0 D)}%
        {F\big((A \boxx_0C)\boxx_1 (B \boxx_0D)\big)}%
\]
\def\diagthree#1#2#3#4#5{
{% Picture saved by xtexcad 2.4
\unitlength=1.000000pt
\thicklines
\begin{picture}(175.00,43.00)(-35.00,-6.00)
\put(0.00,30.00){\vector(0,-1){20.00}}
\put(95.00,0.00){\makebox(0.00,0.00){$#5$}}
\put(0.00,0.00){\makebox(0.00,0.00){$#4$}}
\put(95.00,40.00){\makebox(0.00,0.00){$F(#3)$}}
\put(40.00,40.00){\makebox(0.00,0.00){$F(#2)$}}
\put(0.00,40.00){\makebox(0.00,0.00){$#1$}}
\put(20.00,0.00){\vector(1,0){40.00}}
\put(95.00,10.00){\vector(0,1){20.00}}
\put(55.00,40.00){\vector(1,0){15.00}}
\put(8.00,40.00){\vector(1,0){17.00}}
\end{picture}}
}
\def\diagthreebis#1#2#3#4#5{
{% Picture saved by xtexcad 2.4
\unitlength=1.000000pt
\thicklines
\begin{picture}(175.00,43.00)(-15.00,-6.00)
\put(0.00,10.00){\vector(0,1){20.00}}
\put(95.00,0.00){\makebox(0.00,0.00){$#5$}}
\put(0.00,0.00){\makebox(0.00,0.00){$#4$}}
\put(95.00,40.00){\makebox(0.00,0.00){$F(#3)$}}
\put(40.00,40.00){\makebox(0.00,0.00){$F(#2)$}}
\put(0.00,40.00){\makebox(0.00,0.00){$#1$}}
\put(20.00,0.00){\vector(1,0){40.00}}
\put(95.00,10.00){\vector(0,1){20.00}}
\put(70.00,40.00){\vector(-1,0){15.00}}
\put(8.00,40.00){\vector(1,0){17.00}}
\end{picture}}
}
\[
\diagthree{e'}e{e\boxx_1e}{e'\boxx_1'e'}{F(e)\boxx_1'F(e)}
\diagthreebis{v'}v{v\boxx_0v}{v'\boxx_0'v'}{F(v)\boxx_0'F(v)}
{% Picture saved by xtexcad 2.4
\unitlength=1.000000pt
\begin{picture}(50.00,40.00)(10.00,-6.00)
\thicklines
\put(50.00,0.00){\makebox(0.00,0.00){\hphantom{,}$v'$,}}
\put(0.00,0.00){\makebox(0.00,0.00){$e'$}}
\put(50.00,40.00){\makebox(0.00,0.00){$F(v)$}}
\put(0.00,40.00){\makebox(0.00,0.00){$F(e)$}}
\put(10.00,0.00){\vector(1,0){30.00}}
\put(50.00,10.00){\vector(0,1){20.00}}
\put(0.00,10.00){\vector(0,1){20.00}}
\put(14.00,40.00){\vector(1,0){23.00}}
\end{picture}}
\]
where $A,B,C,D$ are objects of $\duo$ and the meaning of the arrows is clear.

\end{definition}

We call a lax-duoidal functor {\it strong } if
$\phi,\phi_e,\gamma,\gamma_v$ are isomorphisms. A strong duoidal
functor is {\it strict} if these isomorphisms are identities.

\begin{definition}[\cite{AM}, Definition~6.46] 
A duoidal transformation $\phi:F\to G$ between two lax-duoidal
functors is a natural transformation between $F$ and $G$ as functors,
which is a monoidal transformation with respect to two lax-monoidal
structures on $F$ and $G$.
\end{definition}

Duoidal categories, lax-duoidal (strong, strict) duoidal functors and
their duoidal transformations form a $2$-category $2\Cat_{lax}(V)$
($2\Cat(V), 2\Cat_{strict}(V)$).

\subsection{Duoids in duoidal $V$-categories}

The following definition coincides with the definition of a double
monoid given in \cite{AM}.

\begin{definition} 
A {\em duoid\/}  in a duoidal $V$-category $\duo$ is a lax-duoidal $V$-functor
\[
\ttD:1\to \duo.
\]
A {\em morphism\/} between duoids is a duoidal transformation between
corresponding duoidal lax-functors.
\end{definition}
It is easy to see that a duoid $\ttD$ is given by an object $\ttD\in \duo$ together with 
\begin{itemize} 
\item[(i)] a structure of a monoid 
$$ \ttD\boxx_0\ttD\to \ttD \ , \  e\to \ttD$$
with respect to the first monoidal structure, and
\item[(ii)] a structure of a monoid
$$\ttD\boxx_1\ttD \to \ttD \ , \ v\to \ttD$$
with respect to the second monoidal structure.
\end{itemize}
This data should satisfy the  following conditions: 
\begin{itemize}
\item[($\star$)] 
The map $v\to \ttD$ is a monoid morphism with respect to the first
structure and
\item[($\star\star$)] 
the diagram
\begin{center}
{\unitlength=.8mm
\begin{picture}(100,36)(10,0)
\thicklines
\put(40,35){\makebox(0,0){\mbox{$
(\ttD\boxx_1\ttD)\boxx_0 (\ttD\boxx_1\ttD)$}}}
\put(40,30){\vector(0,-1){27}}
\put(62,35){\vector(1,0){15}}
\put(62,-2){\vector(1,0){15}}
\put(88,31){\vector(1,-1){11}}
\put(88,2){\vector(1,1){11}}
\put(85,35){\makebox(0,0){\mbox{$\ttD\boxx_0 \ttD$}}}
\put(102,16.5){\makebox(0,0){\mbox{$\ttD$}}}
\put(40,-2){\makebox(0,0){\mbox{$ (\ttD\boxx_0\ttD)\boxx_1 (\ttD\boxx_0\ttD)$}}}
\put(85,-2){\makebox(0,0){\mbox{$ \ttD\boxx_1 \ttD$}}}
\end{picture}
}
\end{center}
\end{itemize}
\noindent commutes.

\begin{example} 
If $\duo$ is a braided monoidal category then a duoid in $\duo$ is the
same as a commutative monoid.
\end{example}

\begin{example} 
The second unit $v\in \duo$ is a duoid in $\duo$ with the first monoid
structure given by the canonical morphism $v\boxx_0 v\to v$ and with the
second monoid structure given by the canonical isomorphism $v\boxx_1
v\to v.$
\end{example}

\begin{example} 
Duoids in the duoidal category $\funny(\cat,\calK)$ (see Section
\ref{sec:spn-categories}) are $\calK$-enriched $2$-categories whose
$1$-truncation is the category $\cat.$
\end{example}

Any lax-duoidal $V$-functor $F:\duo\to\duo'$ maps duoids in $\duo$ to
duoids in $\duo'$.  If $\ttD$ is a duoid in a duoidal $V$-category
$\duo$ then $\ttD$ is also a duoid in the underlying duoidal category
$\calU(\duo)$. We will use the notation $u(\ttD)$ for this duoid and will
call it the {\em underlying duoid\/} of $\ttD$.

\subsection{Coherence for duoidal $V$-categories.}

Duoidal $V$-categories are algebras of a $2$-monad on  the $2$-category $\Cat(V).$ The forgetful $2$-functor
$ 2\Cat(V) \to \Cat(V)$ reflects equivalences by \cite{BKP}, hence, the following definition makes sense:

\begin{definition}  
A strong duoidal $V$-functor $F$ is a {\em duoidal equivalence\/} if
it is a $V$-equivalence of the underlying $V$-categories.  Two duoidal
$V$-categories are called {\em duoidal equivalent\/} if there is a
duoidal equivalence between them.
\end{definition}

\begin{theorem}
\label{semistrict} 
Every duoidal $V$-category is duoidal equivalent to a strict duoidal $V$-category.
\end{theorem}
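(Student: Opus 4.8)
The plan is to deduce this from the coherence theorem of two-dimensional monad theory. As recorded just before the statement, the strict duoidal $V$-categories are precisely the strict algebras of a $2$-monad $T$ on the $2$-category $\Cat(V)$, and a general duoidal $V$-category is a \emph{pseudo}-$T$-algebra for the same $T$: relaxing the associativity and unit axioms of the action $a\colon T\duo\to\duo$ up to invertible $2$-cells turns the two strict monoidal structures into the pseudomonoids $(\duo,\boxx_0,e)$ and $(\duo,\boxx_1,v)$. The monad $T$ is finitary, being assembled from the free strict-monoidal constructions for the two products together with the interchange and unit morphisms, and $\Cat(V)$ is cocomplete, so the hypotheses of the Blackwell--Kelly--Power coherence theorem \cite{BKP} (in its codescent-object formulation) are satisfied.

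The step I expect to be the main obstacle is precisely this identification of a general duoidal $V$-category with a pseudo-$T$-algebra. One has to verify that relaxing the two algebra axioms reproduces \emph{exactly} the associators and unitors of $\boxx_0$ and $\boxx_1$, while the non-invertible interchange transformation (ii) and the unit maps (iii)--(v) survive unchanged as the part of the structure that $T$ encodes rigidly. In other words, $T$ must be arranged so that its multiplication axiom, when weakened, governs only the coherence isomorphisms of the two monoidal structures and leaves the lax interchange untouched; the whole point is that strictification here strictifies the two tensor products but must keep the interchange lax.

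Granting this, the coherence theorem provides a left $2$-adjoint $(-)'$ to the inclusion of strict $T$-algebras into pseudo-$T$-algebras whose unit $\eta_{\duo}\colon\duo\to\duo'$ is an internal equivalence in the $2$-category of pseudo-$T$-algebras. By construction $\eta_{\duo}$ is a strong morphism of pseudoalgebras, hence a strong duoidal $V$-functor, and $\duo'$ is a strict $T$-algebra, hence a strict duoidal $V$-category. Finally I would match this with the notion defined above: the forgetful $2$-functor $2\Cat(V)\to\Cat(V)$ preserves the internal equivalence $\eta_{\duo}$, so its underlying $V$-functor is a $V$-equivalence, and conversely the fact that this forgetful $2$-functor \emph{reflects} equivalences \cite{BKP} is exactly what guarantees that a strong duoidal functor which is an underlying $V$-equivalence is a duoidal equivalence. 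Thus $\eta_{\duo}\colon\duo\to\duo'$ exhibits the required duoidal equivalence onto the strict duoidal $V$-category $\duo'$.
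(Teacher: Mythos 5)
Your route is genuinely different from the paper's, and its viability turns entirely on the step you yourself flag and then grant: the identification of duoidal $V$-categories with pseudo-algebras of the $2$-monad $T$ whose strict algebras are strict duoidal $V$-categories. Note that the paper asserts something weaker and different: duoidal $V$-categories are \emph{strict} algebras of a $2$-monad on $\Cat(V)$ (a monad that already encodes the associators, the unitors and the lax interchange as freely added arrows), and it uses this only to make sense of ``duoidal equivalence'' via reflection of equivalences \cite{BKP}. Your claim that weakening the algebra axioms of the strict-duoidal monad reproduces exactly Definition~\ref{sec:spn-mono-categ-1} is a biased-versus-unbiased coherence comparison: a pseudo-$T$-algebra carries a basic $n$-ary operation for every object of the free strict duoidal category on $n$ letters, together with invertible comparison cells, and one must prove that this $2$-category of structures is equivalent to that of biased duoidal categories, with the non-invertible interchange and the unit arrows surviving as images under the action of arrows of $T(\duo)$. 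This is plausible and in the spirit of Power-style general coherence results, but it is a substantive lemma of roughly the same weight as the paper's Lemma~\ref{doublemonoidal}; as written, your proof defers its hard part rather than discharging it. A second, smaller caveat: the Blackwell--Kelly--Power/codescent strictification needs the left $2$-adjoint to exist, which is usually secured by accessibility of $T$ together with local presentability of the base; the paper assumes only that $V$ is complete and cocomplete, so ``$T$ is finitary, hence the coherence theorem applies'' requires an argument in the $\Cat(V)$-enriched setting rather than a citation.

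It is instructive that the paper's proof is engineered precisely to avoid your main obstacle. It never identifies duoidal categories with pseudo-algebras: it forgets the interchange and the unit maps altogether, proves (Lemma~\ref{doublemonoidal}) that the remaining \emph{double monoidal} structure can be strictified --- by an operadic argument in $\Cat(V)$ showing that the coproduct $\pi\coprod\pi : M_c \coprod M_c \to M \coprod M$ is a weak equivalence, via the explicit comparison of bicolored binary trees with alternating bicolored trees --- and then transports the interchange transformation and the maps $e \to e\boxx_1 e$, $v\boxx_0 v \to v$, $e\to v$ along the resulting double monoidal equivalence, keeping the strict tensors and units of the target fixed. So the paper strictifies only the two tensor products and carries the lax data across unchanged, whereas you strictify the entire structure at once at the price of the pseudo-algebra identification. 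If you supply that identification (and the existence of the strictification adjoint over $\Cat(V)$), your argument closes and is arguably more conceptual; until then it has a genuine gap exactly where you predicted it.
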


Before we prove the theorem, we introduce the following auxiliary terminology.
Let us call a {\it double monoidal $V$-category} a category $\cat$
equipped with two monoidal structures $(\cat,\boxx_0,e)$ and
$(\cat,\boxx_1,v)$ without any relations between two structures.  We
call a double monoidal $V$-category {\it strict} if both monoidal
structures are strict.  Likewise, we have a notion of double monoidal
strong functor and equivalence between double monoidal $V$-categories.

\begin{lemma}
\label{doublemonoidal} Any double monoidal $V$-category is equivalent
to a strict  double monoidal $V$-category.

\end{lemma}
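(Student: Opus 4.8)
The plan is to recognise double monoidal $V$-categories as the pseudo-algebras of a single $2$-monad on $\Cat(V)$ and then to invoke the strictification theorem of two-dimensional monad theory. Since a monoidal $V$-category is by definition a pseudomonoid in $\Cat(V)$, a double monoidal $V$-category is an object of $\Cat(V)$ carrying two \emph{independent} pseudomonoid structures. Let $S$ be the $2$-monad on $\Cat(V)$ freely generated by two associative unital binary operations (equivalently, the coproduct $T \ast T$ of two copies of the $2$-monad $T$ whose strict algebras are strict monoidal $V$-categories). The first step is to match up the bookkeeping: the strict $S$-algebras are exactly the strict double monoidal $V$-categories, the pseudo-$S$-algebras are exactly the double monoidal $V$-categories, the strong $S$-morphisms are exactly the double monoidal strong functors, and the equivalences in the $2$-category of pseudo-$S$-algebras are exactly the double monoidal equivalences. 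All of these identifications are routine once one unwinds the coherence data of a pseudomonoid for each of the two products $\boxx_0$ and $\boxx_1$ separately.

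The second step is to apply coherence. The base $2$-category $\Cat(V) = V\text{-}\mathbf{Cat}$ is complete and cocomplete, and $S$ is accessible, being assembled from the accessible $2$-monad $T$; hence the hypotheses of \cite{BKP} are satisfied. The strictification theorem of two-dimensional monad theory then asserts that the inclusion of strict $S$-algebras and strong $S$-morphisms into pseudo-$S$-algebras admits a left $2$-adjoint whose unit is, componentwise, an equivalence. Applying this to a given double monoidal $V$-category $\cat$ produces a strict double monoidal $V$-category $\cat'$ together with a double monoidal strong functor $\cat \to \cat'$ that is an equivalence, which is precisely the assertion of the lemma.

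The one point that genuinely needs care — and the reason for routing the argument through a single $2$-monad $S$ rather than through $T$ used twice — is that the two monoidal structures must be rigidified \emph{simultaneously}. The naive strategy of strictifying $\boxx_0$ first and then transporting and strictifying $\boxx_1$ along the resulting equivalence fails, because the second strictification will in general reintroduce nontrivial associativity and unit constraints into the first structure. Encoding both products in the one algebra $S$ sidesteps this obstacle: strictness of an $S$-algebra \emph{is} the simultaneous strictness of both monoidal structures, so the coherence theorem delivers both at once. (Alternatively one could exhibit an explicit model of $\cat'$ by a Mac Lane style construction on words that alternate in the two products, but checking that concatenation is strictly associative and unital for $\boxx_0$ and $\boxx_1$ at the same time amounts to exactly the same coherence, so I would prefer the $2$-monadic route.)
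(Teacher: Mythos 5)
Your route through a single $2$-monad $S = T \ast T$ is attractive, and you correctly diagnose why sequential strictification of the two structures fails; but the pivotal identification -- ``the pseudo-$S$-algebras are exactly the double monoidal $V$-categories'' -- is not routine, and as stated it is a genuine gap. The universal property of the coproduct $T \ast T$ lives in the category of $2$-monads and \emph{strict} monad maps: it identifies strict $(T \ast T)$-algebras with pairs of strict $T$-algebra structures, but it says nothing about pseudo-algebras, which are not classified by maps out of $T \ast T$ in any way that the coproduct's universal property controls. A pseudo-$S$-algebra is a single pseudo-action of the whole coproduct, whose value $(T \ast T)X$ is a colimit of alternating words $X, TX, T'TX, \ldots$, together with coherence isomorphisms and two axioms relating the evaluations of \emph{all} alternating words. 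To manufacture such a structure from two unrelated pseudomonoid structures you must choose an evaluation of every bicolored word and then verify the pseudo-algebra axioms; since the multiplication of $T \ast T$ merges adjacent same-colored layers, this verification amounts precisely to the statement that all composites of constraint isomorphisms within each maximal monocolored block agree. That is exactly the coherence content which the paper supplies by exhibiting the coproduct operads explicitly as $\BTree$ (bicolored binary trees) and $\ATree$ (alternating trees) and proving that the contraction map $F: \BTree \to \ATree$ is a weak equivalence of $\Cat(V)$-operads. So your ``routine bookkeeping'' step silently contains the lemma's whole difficulty, relocated rather than discharged.

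The argument can be repaired along your lines, but the repair converges with the paper's proof. Writing $T'$ for the pseudo-algebra classifier of $T$ (so that the $2$-category of pseudo-$T$-algebras is that of strict $T'$-algebras, with comparison $p: T' \to T$), double monoidal $V$-categories \emph{are} strict $(T' \ast T')$-algebras by the coproduct's (strict) universal property, strict double monoidal ones are strict $(T \ast T)$-algebras, and the lemma becomes the assertion that restriction along $p \ast p: T' \ast T' \to T \ast T$ hits every algebra up to equivalence. For that one needs to know that the coproduct of two weak equivalences of $2$-monads -- equivalently, in the paper's operadic formulation, the map $\pi \coprod \pi: M_c \coprod M_c \to M \coprod M$ -- is again a weak equivalence. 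This is the ``non-abelian K\"unneth'' point which the paper isolates in its closing remark and proves only via the explicit tree description; it is not available off the shelf, and in particular is not part of the strictification machinery you invoke. (A secondary, citation-level point: the strictification of pseudo-algebras of an accessible $2$-monad is due to Power and to Lack's codescent-object arguments rather than to \cite{BKP}, which concerns strict algebras with pseudo-morphisms; your accessibility and cocompleteness checks for $S$ and $\Cat(V)$ are otherwise fine.)
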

\begin{proof} 
Any strict monoidal $V$-category is an algebra of the nonsymmetric
operad $M = \{M(n)\}_{n \geq 0}$ in $\Cat(V)$ such that $M(n)$ is, for
each $n$, the terminal category. A monoidal $V$-category is an algebra
of another operad $M_c$ in $\Cat(V).$ There is an operadic map
$\pi:M_c\to M$ recalled below which is an adjoint $V$-equivalence in
each arity (operadic weak equivalence). One can prove that $M_c$ is a
cofibrant resolution of $M$ in the category of nonsymmetric operads in
$\Cat(V)$ equipped with a model structure developed in \cite{Muro} and
\cite{LackV}. Here we consider the model structure on $V$ for which weak
equivalences are isomorphisms. These data allow to prove the coherence
result for monoidal $V$-categories (using bar-construction, for
example).
 
More generally, one can prove by the same method that given a weak
equivalence $\xi:A\to B$ of $\Cat(V)$-operads, every $A$-algebra is
equivalent to an algebra of the form $\xi^*(X)$, where $\xi^*$ is the
restriction functor induced by $\xi$.

\begin{remark}
One can prove that the adjunction between categories of algebras
induced by $\xi$ is in fact a Quillen equivalence.
\end{remark}

Observe now that a
double monoidal $V$-category is an algebra of the operad $M_c\coprod
M_c$ and a strict double monoidal $V$-category is an algebra of
$M\coprod M.$ Therefore, the lemma will be proved if we establish that
coproduct $\pi\coprod\pi$ is a weak equivalence of operads. This
follows from the following explicit description of this coproduct.
  
{\it A bicolored binary planar tree} is a planar tree whose vertices
have valencies three or one and have two colors white and black. A
planar tree $l$ without vertices (and, therefore, without coloring) is
also considered as a binary bicolored tree.  Let $\BTree$ be the set
of isomorphism classes of bicolored binary trees with $n$-leaves.
The sequence $\BTree := \{\BTree(n)\}_{n \geq 0}$ is an operad in
$Set$ -- the free operad on two $0$-operations and two binary
operations. A subtree $S$ of a bicolored binary tree $T$ is called
{\em monocolored\/} if all its vertices have the same colors. Any monocolored
subtree belongs to a unique maximal monocolored subtree.
 
{\it An alternating bicolored planar tree} is a planar tree whose
vertices have valencies one or greater or equal than three and have
two colors -- white and black. It also must satisfy the following
condition: there is no edge connecting two vertices of the same color.
The tree $l$ is also considered as an alternating bicolored tree. Leaves
and roots of an alternating bicolored tree inherit the color by the
following rule: a leaf (a root) has white (black) color if the unique
vertex to which the leaf (the root) is attached has white (black)
color.

Let $\ATree(n)$ be the set of isomorphism classes of 
alternating bicolored trees with
$n$-leaves. The sequence $\ATree := \{\ATree(n)\}_{n \geq 0}$ forms 
an operad. The operadic
multiplication is given by grafting if we graft a tree to a leaf of
another tree and the color of this leaf is different from the color
of the root. In the case the colors coincide, we graft and contract the
edge which has the endpoints of the same color. The unit of this
operad is $l.$
 
There is an obvious operadic map $F:\BTree \to \ATree$. For a bicolored
binary tree $T$, the tree $F(T)$ is obtained by contracting all maximal
monocolored subtrees of $T$ to corollas and preserving the colors.
 
We make $\ATree$ a $\Cat(V)$-operad by considering $\ATree(n)$ as a
discrete $V$-category.  Likewise, we make $\BTree$ a $\Cat(V)$-operad
by requiring that we have a unique isomorphism between two bicolored
binary trees if and only if their imaged under $F$ coincide.  It is
easy to see that $F$ is indeed a weak equivalence of
$\Cat(V)$-operads. Indeed, one can easily check that $M_c\coprod M_c \simeq
\BTree$ and $M\coprod M\simeq \ATree$ by considering generators and
relations in these operads. Moreover, $F\simeq \pi\coprod\pi$, which
completes the proof.
\end{proof}

\begin{proof}[Proof of Theorem~\ref{semistrict}.]
Let $\duo$ be a duoidal category. It has an underlying double
monoidal category $D$. By Lemma~\ref{doublemonoidal}, one can find
a strict double monoidal category $D'$ and a double monoidal
equivalence $F:D \to D'.$ Using this equivalence one can transport the
duoidal structure from $\duo$ to $D'$ without altering the tensor
products and units in $D'.$ In this way we obtain a strict duoidal
category $\duo'$ and $F$ is lifted to a duoidal equivalence $F':\duo
\to \duo'$.
\end{proof}

\begin{remark}Our proof of Lemma~\ref{semistrict} was based on the fact that the
coproduct of two weak equivalences of $\Cat(V)$-operads is again a weak
equivalence. This statement is a `non-abelian' version of the
K\"unneth formula for augmented dg-operads proved as Theorem~21
of~\cite{markl:ha}.  
\end{remark}

\section{$\duo$-categories and monoidal $\duo$-categories }
\label{sec:duo-categ-mono}

If $\duo$ is a duoidal category we will denote $\Cat(\duo)$ the
$2$-category of $(\duo,\boxx_0,e)$-enriched categories. It was observed
by Forcey \cite{Forcey} that $\Cat(\duo)$ can be equipped with a
monoidal structure. The tensor product $\times_1$ of two
$\duo$-categories $\calK$ and $\calL$ is given by the cartesian product
on the objects level and
\[
(\calK\times_1\calL)((X,Y),(Z,W)) = \calK(X,Z)\boxx_1 \calL(Y,W),
\mbox { for $a,c \in S$, $b,d \in P$.}
\]
 The unit for this tensor product is the category $\mathbf 1_v$ which has one object $*$ and 
 ${\mathbf 1}_v(*,*) = v.$ 
 
 \begin{definition}\label{mono}
 A monoidal $\duo$-category $\calK = (\calK,\odot,\eta)$ is a pseudomonoid in the monoidal $2$-category $(\Cat(\duo),\times_1,{\mathbf 1_v}).$ 
\end{definition} 

So we have a $\duo$-functor $\odot : \calK \times_1 \calK \to \calK$
fulfilling the expected associativity up to a $\duo$-natural
transformation, and a $\duo$-functor $\eta: {\mathbf 1_v} \to \calK$.
By abusing notations we will denote $\eta$ the value of $\eta$ on the
unique object of $\mathbf 1_v.$

A pseudomonoid  structure therefore implies the existence of a monoid morphism 
\begin{equation}
\label{vaction}
v\to \calK(\eta,\eta)
\end{equation} 
and interchange morphisms
 $$\calK(X,Y)\boxx_1\calK(Z,W)\to \calK(X\odot Z,Y\odot W)$$
 satisfying various coherence conditions.
 
Every $\duo$-category $\calK$ has an underlying $V$-category $\Und\calK$,
with the same objects and morphisms given by 
\[
\Und\calK(X,Y) = \duo(e,\calK(X,Y)),\ X,Y \in \calK.
\] 
This gives a $2$-functor 
$$\Und:\Cat(\duo)\rightarrow \Cat(V).$$
This is actually a lax-monoidal $2$-functor. To see this we have to specify a transformation
\[
\Und\calK\times_V \Und\calL \to \Und(\calK\times_1\calL).
\] 
On the object level this is an identity and on the morphisms level we have 
$$(\Und\calK\times_V \Und\calL)((X,Y),(Z,W)) = \Und\calK(X,Z)\otimes_V \Und\calL(Y,W) =$$ $$=  \duo(e,\calK(X,Z))\otimes_V \duo(e,\calL(Y,W)) 
\to \duo(e\boxx_1 e, \calK(X,Z)\boxx_1 \calL(Y,W))\to $$ $$ \to
\duo (e,  (\calK\times_1\calL)((X,Y),(Z, W)) =
\Und(\calK\times_1\calL)((X,Y),(Z,W)).$$

In this calculation we used the fact that $\boxx_1$ is a $V$-functor
and that $e$ is a comonoid with respect to $\boxx_1.$ The unit
constraint $$1\to { \mathbf 1_v}$$ amounts to a morphism $I_V\to
\duo(e,v)$ which corresponds to the canonical morphism $e\to v$ in
$\duo.$ We leave the verification of coherence conditions to the
reader.

\begin{proposition}
\label{under} 
The $2$-functor $\Und$ maps monoidal $\duo$-categories to monoidal
$V$-categories.

\end{proposition}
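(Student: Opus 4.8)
The plan is to invoke the general principle that a lax-monoidal $2$-functor between monoidal $2$-categories carries pseudomonoids to pseudomonoids, and to observe that all the work has in fact already been done. The discussion immediately preceding the proposition equips $\Und:\Cat(\duo)\to\Cat(V)$ with the structure of a lax-monoidal $2$-functor from $(\Cat(\duo),\times_1,\mathbf{1_v})$ to $(\Cat(V),\times_V,1)$: the lax constraint on morphism objects is built from the fact that $\boxx_1$ is a $V$-functor together with the comonoid structure $e\to e\boxx_1 e$, while the unit constraint $1\to\Und(\mathbf{1_v})$ is the morphism $I_V\to\duo(e,v)$ adjoint to the canonical map $e\to v$. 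Since a monoidal $\duo$-category is by Definition~\ref{mono} exactly a pseudomonoid in $(\Cat(\duo),\times_1,\mathbf{1_v})$, and a monoidal $V$-category is a pseudomonoid in $\Cat(V)$, the proposition follows once the pseudomonoid structure is transported along $\Und$.

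Concretely, I would first record the transported data. Given a monoidal $\duo$-category $\calK=(\calK,\odot,\eta)$, the multiplication on $\Und\calK$ is the composite
\[
\Und\calK\times_V\Und\calK \to \Und(\calK\times_1\calK)\xrightarrow{\Und(\odot)}\Und\calK,
\]
where the first arrow is the lax constraint of $\Und$, and the unit is the composite
\[
1\to\Und(\mathbf{1_v})\xrightarrow{\Und(\eta)}\Und\calK
\]
of the unit constraint with $\Und$ applied to $\eta$. Because $\Und$ is a $2$-functor it carries the associativity and unit $2$-cells witnessing the pseudomonoid structure of $\calK$ to $V$-natural isomorphisms, and these serve as the coherence constraints of the candidate monoidal $V$-category structure on $\Und\calK$.

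The remaining verification is that these transported constraints satisfy the pentagon and triangle identities for a pseudomonoid in $\Cat(V)$. This is a formal consequence of two inputs: the coherence axioms already imposed on the lax-monoidal structure of $\Und$ (its compatibility with the associators and unitors of $\times_1$ and $\times_V$), and the pentagon and triangle for $\calK$, to which $\Und$ is applied. I expect the main bookkeeping obstacle to be tracking the interchange morphism of $\duo$ and the comonoid structure of $e$ as they thread through the lax constraint; once the naturality squares for these are assembled, each coherence diagram for $\Und\calK$ decomposes into a region coming from the coherence of $\Und$ and a region that is the image under $\Und$ of the corresponding diagram for $\calK$. Since this diagram chase introduces no ideas beyond the lax-monoidal coherence recalled above, I would leave its routine completion to the reader.
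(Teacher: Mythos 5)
Your proof is correct and follows the same route as the paper: the paper's own proof consists of the single remark that the proposition is a direct consequence of the lax-monoidality of $\Und$, established in the discussion immediately preceding it. Your explicit description of the transported multiplication, unit, and coherence cells simply fills in the details the paper leaves implicit.
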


\begin{proof}
This is a direct consequence of lax-monoidality of $\Und.$
\end{proof}

\begin{definition} 
A {\em lax-monoidal functor\/} $F$ from a monoidal $\duo$-category
$(\calK,\odot,\eta)$ to a monoidal $\duo$-category
$(\calL,\diamond,\iota)$ is a $\duo$-functor $F:\calK\to\calL$
equipped with a $\duo$-natural transformation:
\[
F(X)\diamond F(Y)\to F(X\odot Y)
\]
and a morphism
\[
\iota\to F(\eta)
\]
which make $F$ a lax-monoidal functor between underlying monoidal
$V$-categories and which satisfy the following additional coherence
condition:

\begin{center}
{\unitlength=.9mm
\begin{picture}(100,60)(33,-12)
%PROSTREDNI PRAVY HORNI
\put(115,30){\makebox(0,0){\makebox(0,0)
{$\calL(FX\diamond FZ,F(X\odot Z))\boxx_0 \calL(F(X\odot Z),F(Y\odot W))$}}}
%PRAVY
\put(115,15){\makebox(0,0){\makebox(0,0){$\calL(FX\diamond FZ,F(Y\odot W))$}}}
%PRAVY DOLNI PROSTREDNI
\put(115,0){\makebox(0,0){\makebox(0,0){$
\calL(FX\diamond FZ,FY\diamond FW)\boxx_0 \calL(FY\diamond FW,F(Y\odot W))$}}}
%HORNI VRCHOL
\put(72.5,45){\makebox(0,0){\makebox(0,0){$ e\boxx_0 \calK(X\odot Z,Y\odot W)$}}}
%DOLNI
\put(72.5,-15){\makebox(0,0)
{\makebox(0,0){$(\calL(FX,FY)\boxx_1\calL(FZ,FW))\boxx_0e$}}}
%LEVY HORNI PROSTREDNI
\put(30,30){\makebox(0,0){\makebox(0,0){$\calK(X\odot Z,Y\odot W)$}}}
%LEVY
\put(30,15){\makebox(0,0){\makebox(0,0){$\calK(X,Y)\boxx_1\calK(Z,W)$}}}
%LEVY DOLNI PROSTREDNI
\put(30,0){\makebox(0,0){\makebox(0,0){$\calL(FX,FY)\boxx_1\calL(FZ,FW)$}}}
\thicklines
\put(115,4){\vector(0,1){8}}
\put(30,12){\vector(0,-1){8}}
\put(115,27){\vector(0,-1){8}}
\put(30,19){\vector(0,1){8}}
\put(95,41.5){\vector(2,-1){16}}
\put(33,33.5){\vector(2,1){18}}
\put(95,-11){\vector(2,1){16}}
\put(33,-3){\vector(2,-1){16}}
\end{picture}}
\end{center}
\end{definition}

As usual, we call a lax-monoidal $\duo$-functor {\em strong\/} 
({\em strict}) if its coherence constrains are isomorphisms (identities). 
 
\begin{definition} 
A {\em monoidal $\duo$-transformation\/} between two lax-monoidal
$\duo$-functors is a $\duo$-natural transformation which is a monoidal
transformation between their underlying lax-monoidal $V$-functors.
\end{definition}
 
Monoidal $\duo$-categories, lax-monoidal (strong, strict)
$\duo$-functors and their monoidal $\duo$-transformations form a
$2$-category $1\Cat_{lax}(\duo)$ ($1\Cat(\duo),1\Cat_{strict}(\duo)$).
Every lax-monoidal $\duo$-functor between monoidal $\duo$-categories
induces a lax-monoidal $V$-functor between the underlying monoidal
$V$-categories. The same is true for $\duo$-transformations.
 
\begin{remark}  
The $2$-category $1\Cat(\duo)$ is not a monoidal $2$-category. To make
it monoidal, we need one more tensor product on $\duo$ which would make
it a trioidal category. If this is the case, the underlying
$V$-category functor
\[
\Und:1\Cat(\duo)\to 1\Cat(V)
\] 
would be even a monoidal $2$-functor.
\end{remark}

\begin{theorem}\label{Kcoherence}  Every monoidal $\duo$-category is equivalent in $1\Cat(\duo)$ to a strict monoidal $\duo$-category.
\end{theorem}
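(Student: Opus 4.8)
The plan is to mimic the operadic strictification used in the proof of Lemma~\ref{doublemonoidal}, but with algebras taken in the monoidal $2$-category $(\Cat(\duo),\times_1,\mathbf 1_v)$ instead of in $\Cat(V)$. By Definition~\ref{mono} a monoidal $\duo$-category is a pseudomonoid in $(\Cat(\duo),\times_1,\mathbf 1_v)$, hence an algebra of the operad $M_c$ whose algebras are pseudomonoids, while a strict monoidal $\duo$-category is an algebra of the terminal operad $M$; these are the same two operads, together with the operadic weak equivalence $\pi:M_c\to M$, that appear in the proof of Lemma~\ref{doublemonoidal}. It will be convenient first to replace $\duo$, using Theorem~\ref{semistrict}, by a strict duoidal $V$-category duoidal equivalent to it, after checking that a duoidal equivalence $\duo\simeq\duo'$ induces a $2$-equivalence $1\Cat(\duo)\simeq 1\Cat(\duo')$ and hence preserves the property of being equivalent to a strict object; I may then assume $\duo$ is strict, so that $(\Cat(\duo),\times_1)$ is a strict monoidal $2$-category.

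The core of the argument is to apply the general strictification statement established at the end of the proof of Lemma~\ref{doublemonoidal}: for a weak equivalence $\xi:A\to B$ of $\Cat(V)$-operads, every $A$-algebra is equivalent to one of the form $\xi^*(X)$. Applied to $\pi:M_c\to M$ acting on $\Cat(\duo)$, this yields, for any monoidal $\duo$-category $\calK$, a strict monoid $X$ (an $M$-algebra) together with an equivalence $\calK\simeq\pi^*(X)$ in $1\Cat(\duo)$. Since every category $M(n)$ is terminal, all structure morphisms of $\pi^*(X)$ factor through $M(n)$ as $M_c(n)\to M(n)\to\End(X)(n)$, so the associativity and unit coherence constraints of $\pi^*(X)$ are identities; that is, $\pi^*(X)$ is a strict monoidal $\duo$-category. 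This exhibits $\calK$ as equivalent to a strict monoidal $\duo$-category and proves the theorem.

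What requires genuine work, and is the main obstacle, is to make precise the action of $\Cat(V)$-operads on $\Cat(\duo)$ and to transport the homotopical argument of Lemma~\ref{doublemonoidal} to this setting. One uses that $\duo$ is enriched over $V$ and that $\boxx_1$ is a $V$-functor to see that the hom-categories of $\Cat(\duo)$ are $V$-categories, so that $\Cat(\duo)$ is enriched and tensored over $\Cat(V)$; the interchange transformation of $\duo$ is precisely what makes $\times_1$ a monoidal structure on $\Cat(\duo)$ compatible with this tensoring, as observed by Forcey~\cite{Forcey}. With $\Cat(\duo)$ so structured, one must check that the cofibrancy of $M_c$, the bar-construction producing the equivalence $\calK\simeq\pi^*(X)$, and the status of $\pi$ as an operadic weak equivalence all persist when algebras are formed in $\Cat(\duo)$ rather than in $\Cat(V)$; once this is verified, the remaining bookkeeping is formal.
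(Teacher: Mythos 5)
Your proposal takes a genuinely different route from the paper, and the route as described has a real gap precisely at the point you defer. The paper's own proof is two lines: a monoidal $\duo$-category is a pseudomonoid in the monoidal $2$-category $(\Cat(\duo),\times_1,{\mathbf 1_v})$; this monoidal $2$-category is \emph{not} a Gray-monoid, but by the Gordon--Power--Street tricategorical coherence theorem \cite{GPS} it can be replaced by an equivalent Gray-monoid, after which Lack's strictification theorem for pseudomonoids in Gray-monoids \cite{Lack} gives the result. Crucially, Lack's construction is colimit-free, so this argument needs no hypotheses on $\duo$ beyond its being a duoidal $V$-category.

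Your plan instead transports the operadic bar-construction strictification of Lemma~\ref{doublemonoidal} from algebras in $\Cat(V)$ to algebras in $\Cat(\duo)$, and this is where the argument breaks. First, to let the $\Cat(V)$-operads $M_c$ and $M$ act on objects of $\Cat(\duo)$ via a tensoring, you need $\Cat(\duo)$ tensored over $\Cat(V)$, which requires $\duo$ itself to be copowered over $V$ with $\boxx_0$ and $\boxx_1$ preserving copowers; Theorem~\ref{Kcoherence} assumes none of this ($\duo$ is merely $V$-enriched, and even $V$-completeness is a hypothesis introduced only in later sections). One can avoid tensoring by formulating an action as an operad map into the endomorphism operad $\{\Cat(\duo)(\calK^{\times_1 n},\calK)\}_{n\ge 0}$, but then the second and decisive problem appears: the statement you quote from the proof of Lemma~\ref{doublemonoidal} --- that for a weak equivalence $\xi:A\to B$ of $\Cat(V)$-operads every $A$-algebra is equivalent to some $\xi^*(X)$ --- is established there only for algebras \emph{in $\Cat(V)$}, via model structures on $\Cat(V)$-operads and a bar construction. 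Rerunning it for algebras in $\Cat(\duo)$ requires free algebras and codescent-type $2$-colimits in $\Cat(\duo)$, hence colimits in $\duo$, together with a homotopy theory on such algebras; none of these exist for a general duoidal $V$-category, so your argument could at best prove a weaker theorem with added (co)completeness hypotheses. Finally, your assertion that after strictifying $\duo$ by Theorem~\ref{semistrict} the monoidal $2$-category $(\Cat(\duo),\times_1)$ becomes strict is itself unproved: $2$-functoriality of $\times_1$ and the middle-four interchange depend on the non-invertible structure maps $e\to e\boxx_1 e$ and the duoidal interchange through the unit coherence axioms, and the paper explicitly flags that $(\Cat(\duo),\times_1,{\mathbf 1_v})$ fails to be a Gray-monoid --- this is exactly why it invokes GPS before applying Lack.
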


\begin{proof} 
This follows from a general theorem of S.~Lack about strictification
of pseudomonoids in a Gray-monoid \cite{Lack}. In our situation, the
monoidal $2$-category $(\Cat(\duo),\times_1,1_v)$ is not a~Gray-monoid
but can be replaced by an equivalent Gray-monoid due to the
tricategorical coherence theorem of Gordon-Power-Street \cite{GPS}.
\end{proof}

Due to this coherence theorem we will assume that all objects of
$1\Cat_{lax}(\duo)$, $1\Cat(\duo)$ and $1\Cat_{strict}(\duo)$ are strict
monoidal $\duo$-categories.

\begin{definition} 
A $\duo$-enriched $2$-category is a category enriched over
$(\Cat(\duo),\times_1,1_v)$.
\end{definition} 

\begin{remark}\label{loop} 
As in the classical situation, we can identify a strict monoidal
$\duo$-category with a $\duo$-enriched $2$-category with one object.
On the other hand, if $\calC$ is a $\duo$-enriched $2$-category and
$x$ is an object of $\calC$, then $\calC(x,x)\in \Cat(\duo)$ is a
monoidal $\duo$-category.
\end{remark}

\subsection{ Monoids in monoidal $\duo$-categories} 
Let $(\calK,\odot,\eta)$ be a monoidal $\duo$-category. The category
${\mathbf 1_v}$ is canonically a monoidal $\duo$-category (the tensor
product is given by the structure isomorphism $v\boxx_1v\to v $).

\begin{definition}
\label{na-plaz} 
A {\em monoid in $\calK$\/} is a lax-monoidal $\duo$-functor
\[
\ttM:{\mathbf 1_v}\to \calK.
\]
A morphism $f:\ttM\to\ttN$ of monoids is a monoidal transformation.
More explicitly, a monoid in $\calK$ is given by an object
$\ttM\in\calK$ together with:
\begin{itemize}
\item[(i)] 
a morphism (neutral element) in $\calK$
$$i:\eta \to  \ttM,$$
which is, by definition,
a morphism $\nubar:e\to \calK(\eta,\ttM)$ in $\duo$,
\item[(ii)] 
a morphism in $\calK$ (multiplication)
$$m: \ttM\odot \ttM \to \ttM$$
that is, a morphism
 $\mubar:e\to \calK(\ttM\odot \ttM,\ttM)$ in $\duo$ and
\item[(iii)] 
a morphism in $\duo$ (the {\em unit\/})
 $$u:v\to \calK(\ttM,\ttM).$$ 
\end{itemize}
This last unusual piece of 
data comes from the requirements that $\ttM$ is a $\duo$-functor.
These data should satisfy the following axioms:
\begin{itemize}
\item[($\star$)] 
$i$ and $m$ make $\ttM$ a monoid in $\Und\calK$,
\item[($\star\star$)] 
$u$ is a monoid morphism in $\duo$, and
\item[($\star\!\star\!\star$)] 
the following diagram commutes:

\begin{center}
\begin{picture}(300,123)(50,-8)
\thicklines
\unitlength=1mm
\put(15,35){\makebox(0,0){\mbox{$v$}}}
\put(40,35){\makebox(0,0){\mbox{$e\boxx_0 v$}}}
\put(1,20){\vector(1,1){12}}
\put(0,14){\vector(1,-2){6}}
\put(19,35){\vector(1,0){12}}
\put(48,35){\vector(1,0){34}}
\put(77,-2){\vector(1,0){10}}
\put(24,-2){\vector(1,0){8}}
\put(108,32){\vector(1,-1){12}}
\put(108,2){\vector(1,1){12}}
\put(105,35){\makebox(0,0){\mbox{$\calK(\ttM\odot\ttM,\ttM)\boxx_0\calK(\ttM,\ttM)$}}}
\put(0,17){\makebox(0,0){\mbox{$v\boxx_1v$}}}
\put(124,17){\makebox(0,0){\mbox{$\calK(\ttM\odot \ttM,\ttM)$}}}
\put(5,-2){\makebox(0,0){\mbox{$\calK(\ttM,\ttM)\boxx_1\calK(\ttM,\ttM)$}}}
\put(55,-2){\makebox(0,0){\mbox{$(\calK(\ttM,\ttM)\boxx_1\calK(\ttM,\ttM))\boxx_0 e$}}}
\put(115,-2){\makebox(0,0){\mbox{$\calK(\ttM\odot \ttM,\ttM\odot \ttM)\boxx_0 \calK(\ttM\odot \ttM,\ttM)$}}}
\put(25,36){\makebox(0,0)[b]{\mbox{\scriptsize $\cong$}}}
\put(65,36){\makebox(0,0)[b]{\mbox{\scriptsize $\overline{\mu}\boxx_0 u$}}}
\put(5,8){\makebox(0,0)[lb]{\mbox{\scriptsize $u \boxx_1 u$}}}
\put(81,0){\makebox(0,0)[b]{\mbox{\scriptsize $\odot\boxx_0 u$}}}
\end{picture}
\end{center}
\end{itemize}
A {\em monoid morphism\/} is a morphism $f: \ttM\to \ttN$ in $\calK$
(i.e.~a morphism $\bar{\phi}: e\to \calK(\ttM,\ttN)$ in~$\duo$) which
satisfies the usual requirements for monoids morphism, and

\begin{itemize}
\item[($\star\!\star\!\star \hskip 0.05em \star$)]  
the following diagram commutes:
\begin{center}
\begin{picture}(300,100)(50,-8)
\thicklines
\unitlength=.8mm
%\put(15,35){\makebox(0,0){\mbox{$v$}}}
\put(40,35){\makebox(0,0){\mbox{$v\boxx_0 e$}}}
\put(40,20){\vector(0,1){12}}
\put(40,14){\vector(0,-1){12}}
%\put(19,35){\vector(1,0){12}}
\put(48,35){\vector(1,0){34}}
\put(48,-2){\vector(1,0){34}}
%\put(24,-2){\vector(1,0){8}}
\put(108,32){\vector(1,-1){11}}
\put(108,2){\vector(1,1){11}}
\put(103,35){\makebox(0,0){\mbox{$\calK(\ttM,\ttM)\boxx_0\calK(\ttM,\ttN)$}}}
\put(40,17){\makebox(0,0){\mbox{$v$}}}
\put(124,17){\makebox(0,0){\mbox{$\calK( \ttM,\ttN)$.}}}
\put(40,-2){\makebox(0,0){\mbox{$e \boxx_0 v$}}}
\put(103,-2){\makebox(0,0){\mbox{$\calK(\ttM,\ttN)\boxx_0 \calK(\ttN,\ttN)$}}}
\put(35,25){\makebox(0,0)[b]{\mbox{\scriptsize $\cong$}}}
\put(35,6){\makebox(0,0)[b]{\mbox{\scriptsize $\cong$}}}
%\put(65,36){\makebox(0,0)[b]{\mbox{\scriptsize $\overline{\mu}\boxx_0 u$}}}
%\put(5,8){\makebox(0,0)[lb]{\mbox{\scriptsize $u \boxx_1 u$}}}
%\put(81,0){\makebox(0,0)[b]{\mbox{\scriptsize $\odot\boxx_0 u$}}}
\end{picture}
\end{center}
\end{itemize}
Monoids and their morphisms form the category $\Mon(\calK).$
\end{definition}

\subsection{$\calK$-enriched categories.}
Classically, a monoid in a monoidal category $\calC$ is the same as a one
object $\calC$-enriched category.  We now introduce
$\calK$-enriched categories in a way that this property is preserved.

\begin{definition} 
A {\em $\calK$-enriched category\/} $\ttM$ consists of a set of objects
$\ttM_0$ and, for each two objects $x,y\in \ttM_0$, an object
$\ttM(x,y)\in \calK.$ The structure morphisms are:
\begin{itemize} 
\item[(i)] 
for each object $x\in \ttM_0$, a morphism in $\calK$
\[
i(x): \eta \to \ttM(x,x),
\]
\item[(ii)] 
for any $x,y,z\in \ttM_0$ a morphism
\[
m(x,y,z):\ttM(x,y)\odot \ttM(y,z)\to \ttM(x,z),
\]
\item[(iii)]
and, for any two objects $x,y\in \ttM_0$, a morphism in $\duo$
\[
u(x,y):v\to \calK(\ttM(x,y),\ttM(x,y)).
\] 
\end{itemize}
These data satisfy the obvious analogs of the axioms for monoids
where, in the monoid  coherence condition  ($\star\!\star\!\star$), 
we have to replace $\calK(\ttM,\ttM)\boxx_1\calK(\ttM,\ttM)$ by 
\[
\calK(\ttM(x,y),\ttM(x,y))\boxx_1\calK(\ttM(y,z),\ttM(y,z)).
\]
The rest of the diagram is clear. 
\end{definition}

Analogously, one can define {\em $\calK$-functors\/} 
and {\em $\calK$-natural transformations.\/}
So, a $\calK$-functor $F:\ttM\rightarrow \ttN$ 
is given by a map of objects and an effect in $\duo$ on 
morphisms expressed as a~structure morphism
\begin{equation}
\label{etov}f_e(x,y): e \to \calK(\ttM(x,y),\ttN(F(x),F(y)), \ x,y \in \ttM_0,
\end{equation}
satisfying the usual conditions 
and an obvious analogue of the extra coherence diagram 
(\hbox{$\star\!\star\!\star \hskip 0.05em \star$}) in which 
we have to replace $\ttM$ by $\ttM(x,y)$ and $\ttN$ by $\ttN(F(x),F(y))$.

\begin{remark}\label{replace} 
We  can replace  the structure morphism (\ref{etov}) by the morphism  
\begin{equation}
\label{vtov} f_v(x,y): v \to \calK(\ttM(x,y),\ttN(F(x),F(y)), \ x,y \in \ttM_0,
\end{equation}
defined  as the composite
\begin{align*}
v\simeq e\boxx_0 v \stackrel{f_e\boxx_0 u}{\vlra}& \hskip .5em
\calK\big(\ttM(x,y),\ttN(F(x),F(y)\big)\boxx_0
\calK\big(\ttN(F(x),F(y)),\ttN(F(x),F(y)\big)\longrightarrow
\\
&\hskip 1em \longrightarrow
\calK\big(\ttM(x,y),\ttN(F(x),F(y)\big).
\end{align*}
We can reconstruct $f_e$ from $f_v$ by precomposing with $e\to v.$ 
\end{remark}

It is not difficult to check that $\calK$-enriched categories,
their $\calK$-functors and $\calK$-natural transformations form a
$2$-category which we will denote $\Cat(\calK)$. By abusing the
notation, we will also denote by $\Cat(\calK)$ the $1$-truncation of
$\Cat(\calK)$, i.e.~the ordinary category of $\calK$-categories and
$\calK$-functors when it does not lead to confusion.
  
With the definitions above we can identify a 
monoid in $\calK$ with a $\calK$-category with one object. 
This identification gives a functor 
\[
\Sigma: \Mon(\calK)  \to \Cat(\calK).
\]

\section{Operads in duoidal categories}

It is customary and convenient in the classical operad theory to assume
that the base symmetric monoidal category is strict.  This is possible
due to MacLane coherence theorem.  We follow this tradition and assume
that our base duoidal category $\duo$ is strict.
Theorem~\ref{semistrict} justifies this assumption.

The notion of a $2$-fold operad in a $2$-fold monoidal category was
introduced by Forcey, Siehler and Sowers in~\cite{Forcey2}.  Our
notion of a duoidal category is weaker than Forcey's $2$-fold monoidal
category, so we need a slight modification of their definition.

\begin{definition}
\label{Forcey-operad}
A collection $A = \{A(n)\}_{n \geq 0}$ of objects of $\duo$ is a
{\it Forcey $1$-operad} 
if, for each integers $n \geq 1$, $\Rada k1n
\geq 0$, one is given a morphism
\begin{equation}
\label{eq:5}
\gamma : \ll A(k_1) \boxx_1 \cdots \boxx_1 A(k_n)\rr \boxx_0 A(n) \to
A(k_1 + \cdots + k_n),
\end{equation}
fulfilling
the obvious version of the associativity for a nonsymmetric
operad. One also requires
a $\duo$-map $j : e \to A(1)$ (the {\em unit\/}) 
such that the diagrams
\[
\hskip 4em
{\unitlength=1.1mm
\begin{picture}(40,8)(10,15)
\thicklines
\put(13,25){\makebox(0,0){\mbox{$(\boxx_1^k e) \boxx_0 A(k)$}}}
\put(13,21){\vector(0,-1){10}}
\put(37.5,25){\vector(-1,0){12}}
\put(46,25){\makebox(0,0){\mbox{$e\boxx_0 A(k)$}}}
\put(45,21){\vector(0,-1){10}}
\put(13,7){\makebox(0,0){\mbox{$(\boxx_1^k A(1)) \boxx_0 A(k)$}}}
\put(28,7){\vector(1,0){13}}
\put(46,7){\makebox(0,0){\mbox{$A(k)$}}}
\put(12,16){\makebox(0,0)[r]{\mbox{\scriptsize $\boxx^k_1 j\boxx_0\id$}}}
\put(31,8.5){\makebox(0,0)[b]{\mbox{\scriptsize $\gamma$}}}
\end{picture}}
\hskip 2em \mbox { and }
%
%\label{eq:12}
\jarca{A(k) \boxx_0 e}{A(k)}{A(k) \boxx_0 A(1)}{\id \boxx_0j}
\]
commute for each $k \geq 0$. 
{\em Morphisms\/} of  Forcey $1$-operads are
morphism of the underlying collections compatible with all
structure operations.
\end{definition}

We, therefore, have a category of Forcey $1$-operads in $\duo.$ 
Every Forcey $1$-operad determines a right action of $A$ on $A(0)$ in
the sense that there are morphisms $(\boxx_1^k A(0))\boxx_0 A(k)
\rightarrow A(0), k\ge 1$, which satisfy the usual conditions for operad
action.

\begin{definition}
\label{1-operad} 
A {\em $1$-operad\/} in a duoidal category $\duo$ is a Forcey
$1$-operad in $\duo$ equipped with a left $v$-module structure $v
\boxx_0 A(0) \to A(0)$ with respect to $\boxx_0$ on $A(0)$ such that
it makes $A(0)$ a $(v,A)$-bimodule in the sense that the following
diagram commutes:

{\unitlength=1.5mm
\begin{picture}(40,30)(-40,2)
\thicklines
\put(0,26){\makebox(0,0)[b]{\scriptsize $\cong$}}
\put(17,25){\makebox(0,0){$v\boxx_0 (\boxx_1^k A(0)) \boxx_0 A(k)$}}
\put(-20,25){\makebox(0,0){{$(\boxx_1^kv)\boxx_0(\boxx_1^kA(0))\boxx_0A(k)$}}}
\put(-20,7){\makebox(0,0){{$\big(\boxx_1^k(v\boxx_0 A(0))\big) \boxx_0 A(k)$}}}
\put(-20,21){\vector(0,-1){10}}
\put(31,25){\vector(1,0){8}}
\put(-3,25){\vector(1,0){6}}
\put(-5,7){\vector(1,0){8}}
\put(46,25){\makebox(0,0){\mbox{$v\boxx_0 A(0)$}}}
\put(45,21){\vector(0,-1){10}}
\put(15,7){\makebox(0,0){\mbox{$(\boxx_1^k A(0)) \boxx_0 A(k)$}}}
\put(26,7){\vector(1,0){15}}
\put(45,7){\makebox(0,0){\mbox{\hphantom{.}$A(0)$.}}}
\end{picture}}
\end{definition}

\begin{remark} 
As it is clear from these definitions, $1$-operads and Forcey
$1$-operads differ only in the treatment of composition~(\ref{eq:5}) for
$n=0$. If we agree that $\boxx_1^0 = v$ and add the $n=0$ case of
$\gamma$ to Definition~\ref{Forcey-operad}, we will obtain exactly 
Definition~\ref{1-operad}.
\end{remark}

\begin{example}\label{Ass} 
The associativity $1$-operad $\fAss$ in $\duo$
is given by $\fAss(n) = v$, with the unit and multiplication
given by canonical morphisms $e\rightarrow v$ and $v\boxx_0 v
\rightarrow v.$ \end{example}

\begin{example}
\label{weak}
The Forcey $1$-operad $\eAss$ is given by $\eAss(n) = e$.
The unit is obvious. The multiplication is defined as the composition
\begin{align*}
(\underbrace{e \boxx_1\ldots\boxx_1e}_n)& \boxx_0 e
\simeq(\underbrace{e \boxx_1\ldots\boxx_1e}_n))\boxx_0 ((\underbrace{v
\boxx_1\ldots\boxx_1v}_{n-1}\boxx_1 e) \rightarrow
\\
&\rightarrow \underbrace{(e\boxx_0 v)\boxx_1\ldots \boxx_1 (e\boxx_0
v)}_{n-1} \boxx_1 (e\boxx_0 e) \simeq \underbrace{v\boxx_1\ldots
\boxx_1 v}_{n-1} \boxx_1 e \simeq e. 
\end{align*}
For $n=1$ we have just the canonical isomorphism
$$e\boxx_0 e \to e.$$

\end{example}

\begin{example}
If $\duo$ is a braided monoidal category, then a $1$-operad in $\duo$
is a classical nonsymmetric operad. In this case there is no
difference between Forcey operads and $1$-operads.
\end{example}

\subsection{Endomorphism operads and algebras of operads}\label{algebras} 

Let $(\calK,\odot,\eta)$ be a (strict) monoidal $\duo$-category.

\begin{definition} 
\label{sec:endom-oper-algebr}
The {\em endomorphism $1$-operad\/} of an object $X\in \calK$ is
determined by:
\[
\End_X(n) = {\calK}(\odot^n X,X)
\]
with the obvious multiplication and unit data. 
The only unusual data
is the action of $v$ on $\End_X(0) = {\calK}(\eta,X)$ given
by the composition of the structure maps
\[
v \boxx_0 \End_X(0) \to \calK(\eta,\eta) \boxx_0 \calK(\eta,X) \to
\calK(\eta,X) = \End_X(0).
\]
Since the monoid ${\calK}(\eta,\eta)$ acts on ${\calK}(\eta,X)$, we
have also an action of $v$ via the morphism of monoids
(\ref{vaction}).
\end{definition}

\begin{definition} An {\em algebra\/} of
a $1$-operad $A$ is
an object $X$ of a duoidal category $\calK$ equipped  with a $1$-operad morphism $k: A \to \End_X$.
\end{definition}

We define now the $V$-enriched category of algebras for a $1$-operad $A$. 
To shorten the notations, we denote 
for two objects $X,Y\in \calK$ and $n \geq 0$,
\[
\EE_X(n) = \End_X(n), \ \EE_Y(n) = \End_Y(n), \ \EE_{X,Y}(n) = 
{\calK}(\odot^n X,Y).
\]
Let now $X,Y$ be two $A$-algebras and let 
$k_X^n: A(n) \rightarrow \EE_X(n)$, $k_Y^n : A(n) 
\rightarrow \EE_Y(n)$ be components of their structure morphisms.
By definition, they are  morphisms 
\[
k_X^n: I_V\rightarrow 
{\duo}(A(n), \EE_X(n)), \ k_Y^n: I_V\rightarrow 
{\duo}(A(n) ,\EE_Y(n))
\]
in $V$. For any $n\ge 0$, we have obvious actions
\[
\EE_X(n)\boxx_0 \EE_{X,Y}(1) \stackrel{a_0}\longrightarrow
\EE_{X,Y}(n) \stackrel{a_1}\longleftarrow \EE_{X,Y}(1)\boxx_0 \EE_Y(n).
\]
Now we define, using $a_0$, for each $n$ a morphism $d_n^0 :
{\duo}(e,\EE_{X,Y}(1)) \rightarrow {\duo}(A(n),
\EE_{X,Y}(n))$ in~$V$
\begin{gather*}
{\duo}(e,\EE_{X,Y}(1)) \rightarrow I_V\otimes_V
{\duo}(e,\EE_{X,Y}(1)) \rightarrow {\duo}(A(n),
\EE_X(n))\otimes_V {\duo}(e,\EE_{X,Y}(1)) \rightarrow
\\
\rightarrow {\duo}(A(n)\boxx_0 e, \EE_X(n))\boxx_0 \EE_{X,Y}(1))
\rightarrow {\duo}(A(n), \EE_{X,Y}(n)).
\end{gather*}
Similarly we define $d_n^1 : {\duo}(e,\EE_{X,Y}(1)) \rightarrow
{\duo}(A(n), \EE_{X,Y}(n))$ using $a_1$. Finally, 
we define the $V$-enriched Hom
from $X$ to $Y$ as the equalizer of the products of $d_n^0$ and
$d_n^1$,
%\[
%{\duo}(e,\EE_{X,Y}(1))
%\raisebox{-.2em}{$
%{\def\arraystretch{.5}
%\begin{array}{c}
%\longrightarrow
%\\
%{\longrightarrow}
%\end{array}
%}
%$}
%\prod_{n\geq 0}{\duo}(A(n), \EE_{X,Y}(n)).
%\]
$$
{\duo}(e,\EE_{X,Y}(1))
%\raisebox{-.2em}{$
%{\def\arraystretch{.5}
%\begin{array}{c}
%\longrightarrow
%\\
%{\longrightarrow}
%\end{array}
%}
%$}
\eqv
\prod_{n\geq 0}{\duo}(A(n), \EE_{X,Y}(n)).
$$
It is easy to see that the above construction defines a $V$-enriched category   of
algebras of~$A$. As usual, the category of algebras of $A$ is just the
underlying category  of this $V$-category.
Analogously one can define $V$-category of algebras of any Forcey operad.

\begin{proposition}
\label{sec:spn-mono-categ-2}The category of algebras of the Forcey $1$-operad $\eAss$ is isomorphic to the category of 
 monoids in the underlying category $\Und\calK.$  The category of algebras of the
$1$-operad $\fAss$ is isomorphic to the category of monoids
 in $\calK.$ 
 
\end{proposition}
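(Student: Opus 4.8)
**The plan is to unwind both halves of the statement directly from the definitions of the two operads and of the $V$-enriched category of algebras, exhibiting in each case an explicit mutually-inverse correspondence between algebra structures and monoid structures.**

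First I would treat the $\fAss$ case, which should match the full notion of monoid in $\calK$ from Definition~\ref{na-plaz}. Recall $\fAss(n) = v$ for all $n$, with unit $e\to v$ and multiplication $v\boxx_0 v\to v$ given by the canonical duoidal maps. An algebra structure on $X$ is a $1$-operad morphism $k:\fAss\to\End_X$, i.e.\ for each $n$ a morphism $k_n:v\to\End_X(n) = \calK(\odot^n X,X)$ in $\duo$, compatible with the operadic structure. I would read off the three pieces of data of a monoid: the component $k_2:v\to\calK(X\odot X,X)$ together with the map $e\to v$ produces $\mubar:e\to\calK(X\odot X,X)$ (the multiplication $m$); the component $k_0:v\to\calK(\eta,X) = \End_X(0)$ together with $e\to v$ produces $\nubar:e\to\calK(\eta,X)$ (the neutral element $i$); and the component $k_1:v\to\calK(X,X) = \End_X(1)$ is exactly the unusual unit datum $u:v\to\calK(X,X)$ from item~(iii). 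The key point is that, because $\fAss$ is freely generated over $v$ by these low-arity operations, the operad-morphism axioms for $k$ translate precisely into the monoid axioms $(\star)$, $(\star\star)$, $(\star\!\star\!\star)$. In particular the left $v$-module compatibility built into Definition~\ref{1-operad} on $A(0)$, transported through $k_0$, is what forces $\nubar$ to be compatible with the $v$-action on $\End_X(0)$ as specified in Definition~\ref{sec:endom-oper-algebr}; and the requirement that $k_1$ be a monoid map (a consequence of $j:e\to\fAss(1)$ respecting operadic units and of the multiplication $v\boxx_0 v\to v$) is exactly axiom $(\star\star)$ saying $u$ is a monoid morphism. The associativity/unitality of the operad map then yields axiom $(\star\!\star\!\star)$ via the interchange coherence; I would check that the pentagon-shaped diagram in Definition~\ref{na-plaz} is literally the arity-$(2,1)$ instance of the $\gamma$-associativity of $\fAss$ composed with $k$.

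For the $\eAss$ case the argument is the same in structure but simpler, since $\eAss(n) = e$ with the composite multiplication described in Example~\ref{weak}. Here a Forcey-operad morphism $k:\eAss\to\End_X$ gives $k_n:e\to\calK(\odot^n X,X)$, and the only surviving data are $\nubar = k_0:e\to\calK(\eta,X)$ and $\mubar = k_2:e\to\calK(X\odot X,X)$, with the $n=1$ component $k_1:e\to\calK(X,X)$ determined (no separate $v$-level unit appears, because $\eAss$ has no $v$-module structure on $\eAss(0)$ — this is precisely the distinction flagged in the remark following Definition~\ref{1-operad}). Consequently the algebra axioms reduce to exactly the axioms for a monoid in the underlying monoidal $V$-category $\Und\calK$ — that is, axiom $(\star)$ alone, with $(\star\star)$ and $(\star\!\star\!\star)$ vacuous because there is no $u$ to constrain. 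I would verify that under $\Und$ the maps $\nubar,\mubar:e\to\calK(-,-)$ are literally morphisms of $\Und\calK$ (using $\Und\calK(X,Y) = \duo(e,\calK(X,Y))$), and that the operadic associativity of $\eAss$ becomes the associativity square for a monoid.

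The main obstacle I anticipate is bookkeeping rather than conceptual: one must check that the correspondences on objects extend functorially to morphisms — i.e.\ that $\eAss$- (resp.\ $\fAss$-) algebra maps, defined via the equalizer description of the $V$-enriched Hom, correspond exactly to monoid morphisms (and in the $\fAss$ case to the morphisms satisfying the extra coherence diagram $(\star\!\star\!\star\,\star)$). The delicate verification is that the equalizer of $d_n^0$ and $d_n^1$ over all $n$ cuts out precisely the compatibility with $\mubar$, $\nubar$, and $u$; I would argue that, because $\fAss$ and $\eAss$ are generated in low arities, the infinite product of equalizer conditions collapses to the finitely many monoid-morphism conditions, the higher-arity conditions following automatically from operadic associativity. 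Granting this, the two assignments are manifestly mutually inverse and natural, giving the asserted isomorphisms of categories.
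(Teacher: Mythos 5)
Your proposal is correct and follows essentially the same route as the paper: the paper likewise unwinds an $\fAss$-algebra into the three $\duo$-maps $\nu: v\to\calK(\eta,\ttM)$, $\mu: v\to\calK(\ttM\odot\ttM,\ttM)$, $u: v\to\calK(\ttM,\ttM)$ subject to five commuting diagrams (its (d1)--(d5), which are exactly the low-arity operad-morphism constraints you describe), extracts $\nubar,\mubar$ by composing with $e\to v$, and identifies the resulting axioms with those of Definition~\ref{na-plaz}, treating the $\eAss$ half as classical just as you reduce it to a monoid in $\Und\calK$. The one step the paper makes explicit that you leave inside ``manifestly mutually inverse'' is the reconstruction of the $v$-level components from the monoid data --- $\nu$ and $\mu$ are recovered as the composites $v\cong e\boxx_0 v\stackrel{\nubar\boxx_0 u}\to\calK(\eta,\ttM)\boxx_0\calK(\ttM,\ttM)\to\calK(\eta,\ttM)$ (and similarly for $\mu$), via its diagrams (d3) and (d4) --- which is precisely what makes the correspondence bijective rather than merely surjective on data.
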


\begin{proof}

The first statement of the proposition is classical. Let us prove the
second statement.  Let $\ttM$ be an algebra of $\fAss.$ It is obvious
that the structure algebra map $\fAss \to \End_{\ttM}$ is given by the
following three maps in $\duo$: \hfill\break \hphantom{iiiii}(i)~the
`neutral element' $\nu : v \to \calK(\eta,\ttM)$,\hfill\break
\hphantom{iiii}(ii)~the `unit' $u : v \to \calK(\ttM,\ttM)$, and
\hfill\break \hphantom{iii}(iii)~the `multiplication' $\mu : v \to
\calK(\ttM \odot \ttM,\ttM)$, \hfill\break such that the
diagrams~(\ref{d1})--(\ref{d5}) of maps in $\duo$ below are
commutative.

\begin{subequations}
\begin{equation}
\posunm{2}
\label{d1}
\unitlength 4mm 
\begin{picture}(19,4.5)(2,8)
\thicklines
\put(5,13){\vector(1,0){9}}
\put(23,12.25){\vector(0,-1){1.5}}
\put(23,9){\vector(0,-1){1.25}}\put(23,8){\vector(0,1){1.25}}
\put(23,4.75){\vector(0,1){1.5}}
\put(5,4){\vector(1,0){9}}
\put(-2,9.25){\vector(0,1){3}}
\put(-2,7.75){\vector(0,-1){3}}
\put(-5,13){\makebox(0,0)[cl]{$v\boxx_0v\cong(v\boxx_1v)\boxx_0 v$}}
\put(-5,8.5){\makebox(0,0)[cl]{$v\boxx_0 e \cong v$}}
\put(-5,4){\makebox(0,0)[cl]{$v\boxx_0v\cong(v\boxx_1v)\boxx_0v$}}
\put(14.5,13){\makebox(0,0)[lc]{
$\calK(\ttM\odot(\ttM\odot\ttM),\ttM\odot\ttM)\boxx_0\ \calK(\ttM\odot\ttM,\ttM)$}}
\put(23,10){\makebox(0,0)[cc]{$\calK(\ttM\odot(\ttM\odot\ttM),\ttM)$}}
\put(23,7){\makebox(0,0)[cc]{$\calK((\ttM\odot\ttM)\odot\ttM,\ttM)$}}
\put(14.5,4){\makebox(0,0)[lc]{
$\calK((\ttM\odot\ttM)\odot\ttM,\ttM\odot\ttM)\boxx_0\ \calK(\ttM\odot \ttM,\ttM)$}}
\put(9,13.25){\makebox(0,0)[bc]{\scriptsize $\odot(u\boxx_1 \mu)\boxx_0 \mu$}}
\put(23.5,11.5){\makebox(0,0)[lc]{\scriptsize $\circ$}}
\put(23.5,8.5){\makebox(0,0)[lc]{\scriptsize $\cong$}}
\put(23.5,5.5){\makebox(0,0)[lc]{\scriptsize $\circ$}}
\put(9,3.75){\makebox(0,0)[tc]{\scriptsize $\odot(\mu\boxx_1 u)\boxx_0 \mu$}}
\end{picture}
\end{equation}
%%%%%%%%%%%%%%%%%%%%%%%%%%%%%%%%%%%%%%%%%%%%%%%%%%
\begin{equation}
\label{d2}
\posunm{2}
\unitlength 4mm
\begin{picture}(19,4.75)(5,8)
\thicklines
\put(7.25,12){\vector(1,0){5}}
\put(20.5,11){\vector(0,-1){2}}
\put(26.5,5){\vector(0,1){2}}
\put(7.25,4){\vector(1,0){11}}
\put(2,11){\vector(0,-1){2}}
\put(2,5){\vector(0,1){2}}
\put(2,12){\makebox(0,0)[cc]{$v\boxx_0 v \cong (v\boxx_1v)\boxx_0v$}}
\put(12.5,12){\makebox(0,0)[cl]{
$\calK(\tte \odot \ttM,\ttM\odot \ttM)\boxx_0\ \calK(\ttM \odot \ttM,\ttM)$}}
\put(13,8){\makebox(0,0)[cl]{$\calK(\ttM,\ttM) \cong \calK(\tte \odot
    \ttM,\ttM) \cong \calK(\ttM \odot \tte,\ttM)$}}
\put(18.5,4){\makebox(0,0)[cl]{
$\calK(\ttM \odot \tte,\ttM\odot \ttM)\boxx_0\ \calK(\ttM \odot \ttM,\ttM)$}}
\put(2,4){\makebox(0,0)[cc]{$v\boxx_0 v \cong (v\boxx_1v)\boxx_0v$}}
\put(2,8){\makebox(0,0)[cc]{$v$}}
\put(9.5,12.25){\makebox(0,0)[bc]{\scriptsize $\odot(\nu \boxx_1u)\boxx_0 \mu$}}
\put(21,10){\makebox(0,0)[lc]{\scriptsize $\circ$}}
\put(27,6){\makebox(0,0)[lc]{\scriptsize $\circ$}}
\put(12.5,3.75){\makebox(0,0)[tc]{\scriptsize $\odot(u \boxx_1 \nu)\boxx_0 \mu$}}
\put(8.5,8.25){\makebox(0,0)[cb]{\scriptsize $u$}}
\put(3,8){\vector(1,0){9}}
\end{picture}
\end{equation}
%%%%%%%%%%%%%%%%%%%%%%%%%%%%%%%%%%%%%%%%%%%%%%%%%%%%%%
\begin{equation}
\label{d3}
\unitlength 4mm
\posunm{1.7}
\begin{picture}(19,5)(5,7.5)
\thicklines
\put(6,12){\vector(1,0){9}}
\put(20.5,11){\vector(0,-1){2}}
\put(20.5,5){\vector(0,1){2}}
\put(7.25,4){\vector(1,0){5.75}}
\put(4,11){\vector(0,-1){2}}
\put(4,5){\vector(0,1){2}}
\put(4,12){\makebox(0,0)[cc]{$v\boxx_0v$}}
\put(20.5,12){\makebox(0,0){$\calK(\ttM \odot \ttM,\ttM)\boxx_0\ \calK(\ttM,\ttM)$}}
\put(20.5,8){\makebox(0,0){$\calK(\ttM \odot\ttM,\ttM)$}}
\put(20.5,4){\makebox(0,0){$\calK(\ttM \odot \ttM,\ttM \odot
    \ttM)\boxx_0 \ \calK(\ttM \odot\ttM,\ttM)$}}
\put(4,4){\makebox(0,0)[cc]{$(v\boxx_1v)\boxx_0v$}}
\put(4,8){\makebox(0,0)[cc]{$v$}}
\put(10.5,12.25){\makebox(0,0)[bc]{\scriptsize $\mu \boxx_0u$}}
\put(21,10){\makebox(0,0)[lc]{\scriptsize $\circ$}}
\put(21,6){\makebox(0,0)[lc]{\scriptsize $\circ$}}
\put(10,3.75){\makebox(0,0)[tc]{\scriptsize $\odot(u \boxx_1u)\boxx_0\mu$}}
\put(10.5,8.25){\makebox(0,0)[cb]{\scriptsize $\mu$}}
\put(5,8){\vector(1,0){12.5}}
\end{picture}
\end{equation}
%%%%%%%%%%%%%%%%%%%%%%%%%%%%%%%%%%%%%%%%%%%%%%%%%%%%%%%%%%
\begin{equation}
\label{d4}
\unitlength 4mm
\posunm{1}
\begin{picture}(19,2.5)(5,10)
\thicklines
\put(6,12){\vector(1,0){9.75}}
\put(20.5,11){\vector(0,-1){2}}
\put(4,11){\vector(0,-1){2}}
\put(4,12){\makebox(0,0)[cc]{$v\boxx_0v$}}
\put(20.5,12){\makebox(0,0){$\calK(\tte,\ttM)\boxx_0\ \calK(\ttM,\ttM)$}}
\put(20.5,8){\makebox(0,0){$\calK(\tte,\ttM)$}}
\put(4,8){\makebox(0,0)[cc]{$v$}}
\put(10.5,12.25){\makebox(0,0)[bc]{\scriptsize $\nu \boxx_0u$}}
\put(21,10){\makebox(0,0)[lc]{\scriptsize $\circ$}}
\put(10.5,8.25){\makebox(0,0)[cb]{\scriptsize $u$}}
\put(4.75,8){\vector(1,0){13.5}}
\end{picture}
\end{equation}
%%%%%%%%%%%%%%%%%%%%%%%%%%%%%%%%%%%%%%%%%%%%%%%%%%%%%%%%%%%%%%
\begin{equation}
\label{d5}
\unitlength 4mm
\posunm{1}
\begin{picture}(19,2.5)(5,10)
\thicklines
\put(6,12){\vector(1,0){9.75}}
\put(20.5,11){\vector(0,-1){2}}
\put(4,11){\vector(0,-1){2}}
\put(4,12){\makebox(0,0)[cc]{$v\boxx_0v$}}
\put(20.5,12){\makebox(0,0){$\calK(\ttM,\ttM)\boxx_0\ \calK(\ttM,\ttM)$}}
\put(20.5,8){\makebox(0,0){$\calK(\ttM,\ttM)$}}
\put(4,8){\makebox(0,0)[cc]{$v$}}
\put(10.5,12.25){\makebox(0,0)[bc]{\scriptsize $u \boxx_0u$}}
\put(21,10){\makebox(0,0)[lc]{\scriptsize $\circ$}}
\put(10.5,8.25){\makebox(0,0)[cb]{\scriptsize $u$}}
\put(4.75,8){\vector(1,0){13.5}}
\end{picture}
\end{equation}
\end{subequations}

Using the map $e \to v$ of~Definition~\ref{sec:spn-mono-categ-1}(v), 
one defines 
the morphisms $\nubar \in \Und\calK(\tte,\ttM)$, $\mubar \in
\Und\calK(\ttM\odot \ttM,\ttM)$ and  $\ubar \in \Und\calK(\ttM,\ttM)$ of the
underlying category as the compositions
\[
\nubar := e \to v \stackrel \nu\to \calK(\tte,\ttM),\
\mubar := e \to v \stackrel \mu\to \calK(\ttM\odot \ttM,\ttM)\ 
\mbox{ and }\
\ubar := e \to v \stackrel u\to \calK(\ttM,\ttM).
\]
Diagram~(\ref{d4}) extends into
\[
\unitlength 4mm
\posunm{1.2}
\begin{picture}(19,2.75)(2,10)
\thicklines
\put(6,12){\vector(1,0){9.75}}
\put(20.5,11){\vector(0,-1){2}}
\put(4,11){\vector(0,-1){2}}
\put(4,12){\makebox(0,0)[cc]{$v\boxx_0v$}}
\put(-3,12){\makebox(0,0)[cc]{$e\boxx_0v$}}
\put(-1.5,12){\vector(1,0){3.5}}
\put(3,8.4){\vector(-2,1){5}}
\put(1.5,10){\makebox(0,0)[cb]{\scriptsize $\cong$}}
\put(20.5,12){\makebox(0,0){$\calK(\tte,\ttM)\boxx_0\ \calK(\ttM,\ttM)$}}
\put(20.5,8){\makebox(0,0){$\calK(\tte,\ttM)$}}
\put(4,8){\makebox(0,0)[cc]{$v$}}
\put(10.5,12.25){\makebox(0,0)[bc]{\scriptsize $\nu \boxx_0u$}}
\put(21,10){\makebox(0,0)[lc]{\scriptsize $\circ$}}
\put(10.5,8.25){\makebox(0,0)[cb]{\scriptsize $\nu$}}
\put(4.75,8){\vector(1,0){13.5}}
\end{picture}
\]
which shows that $\nu$ is determined by the composition $\nubar
\boxx_0 u$ of the top two horizontal maps, i.e.~by $\nubar$ and $u$.
Similarly one proves, using~(\ref{d3}), that $\mu$ is determined by
$\mubar$ and $u$. Finally,~(\ref{d5}) implies that $\ubar$
equals the unit map of the underlying category.

{}From~(\ref{d1}) and~(\ref{d2}) one concludes that $(\ttM,\mubar,\nubar)$
is a unital monoid in the underlying category
$\Und\calK$.  Diagram~(\ref{d4}) asserts that $u$ is a monoid morphism. 
From~(\ref{d3}) one gets the last coherence condition:

\begin{equation}
\label{eq:11}
\unitlength 4mm
\posunm{1.7}
\begin{picture}(19,4)(5,7.5)
\thicklines
\put(6,12){\vector(1,0){9}}
\put(20.5,11){\vector(0,-1){2}}
\put(20.5,5){\vector(0,1){2}}
\put(7.25,4){\vector(1,0){5.75}}
\put(4,8){\vector(0,-1){3}}
\put(4,8){\vector(0,1){3}}
\put(4,12){\makebox(0,0)[cc]{$e\boxx_0v$}}
\put(20.5,12){\makebox(0,0){$\calK(\ttM \odot \ttM,\ttM)\boxx_0\ \calK(\ttM,\ttM)$}}
\put(20.5,8){\makebox(0,0){$\calK(\ttM \odot\ttM,\ttM)$}}
\put(20.5,4){\makebox(0,0){$\calK(\ttM \odot \ttM,\ttM \odot
    \ttM)\boxx_0 \ \calK(\ttM \odot \ttM,\ttM)$}}
\put(4,4){\makebox(0,0)[cc]{$(v\boxx_1v)\boxx_0e$}}
\put(10.5,12.25){\makebox(0,0)[bc]{\scriptsize $\mubar \boxx_0u$}}
\put(21,10){\makebox(0,0)[lc]{\scriptsize $\circ$}}
\put(21,6){\makebox(0,0)[lc]{\scriptsize $\circ$}}
\put(10,3.75){\makebox(0,0)[tc]{\scriptsize $\odot(u \boxx_1u)\boxx_0\mubar$}}
\put(5,8){\makebox(0,0)[l]{\scriptsize $\cong$}}
\end{picture}
\end{equation}

This shows that $\fAss$-algebras determine a monoid in $\calK$. The
opposite implication is now obvious as well as the statement about
the isomorphism of categories.
\end{proof}

\section{Center, $\delta$-center  and homotopy center of a monoid}

\subsection{Multiplicative $1$-operads in duoidal categories.}
In the following definition, $\fAss$ is the $1$-operad in $\duo =
(\duo, \boxx_0,\boxx_1,e,v)$ introduced in Example~\ref{Ass}.

\begin{definition}
\label{sec:mutipl-1-oper}
A $1$-operad $A$ in a duoidal category $\duo$ is {\em
multiplicative\/} if it is equipped with a $1$-operad map $\fAss \to A$.
\end{definition}

By definition, a multiplicative structure on $A$ is given by a system
$v \to A(n)$, $n \geq 0$, of $\duo$-morphisms satisfying
appropriate compatibility conditions.
Let  $\Delta$ be the classical simplicial
category whose objects are finite ordinals 
$[n] = \{0,\ldots,n\}$, $n \geq 1$, and morphisms in $\Delta(m,n)$ are
order-preserving set maps $\{0,\ldots,m\} \to \{0,\ldots,n\}$.
\label{page14}

\begin{proposition}
\label{Lei}
Each multiplicative operad $A$ determines a cosimplicial object $A :
\Delta \to \duo$ in $\duo$ whose value at $n \in \Delta$ is $A(n)$.
\end{proposition}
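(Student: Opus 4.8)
The plan is to exhibit the coface and codegeneracy maps of the desired cosimplicial object explicitly and then verify the cosimplicial identities, reducing them to the associativity of $\gamma$ and to the monoid relations carried by the multiplicative structure. Since $\duo$ is assumed strict (Theorem~\ref{semistrict} justifies this), I may freely use the strict unit isomorphisms $e\boxx_0 X\cong X\cong X\boxx_0 e$ and $v\boxx_1 X\cong X\cong X\boxx_1 v$, as well as $\boxx_1^{k}v\cong v$. Write $\omega:e\to v$ for the map of Definition~\ref{sec:spn-mono-categ-1}(v). The multiplicative map $\fAss\to A$ supplies $\duo$-morphisms $\theta_n:v\to A(n)$; I single out the multiplication $\mu:=\theta_2$, the unit element $\nu:=\theta_0:v\to A(0)$, and the \emph{identity} $\iota:=\theta_1:v\to A(1)$. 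Because $\fAss\to A$ is a map of $1$-operads and $\fAss$ carries the canonical associative-monoid structure, the associativity and two-sided unit relations among $\mu,\nu,\iota$ under $\gamma$ are the $\theta$-images of the structural relations of $\fAss$, so they hold automatically; moreover $\iota\circ\omega=j$, the operad unit of Definition~\ref{Forcey-operad}.

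The crucial point, and the place where the genuinely duoidal (as opposed to braided) setting intervenes, is that the trivial slots must be filled by the $v$-level identity $\iota$ and \emph{not} by the operad unit $j:e\to A(1)$. The reason is that the inner legs are combined by $\boxx_1$, whose unit is $v$: only a leg emanating from $v$ collapses against the rest. With $\iota$ every inner source is $v$, so the $\boxx_1$ of the inner legs reduces to a single $v$, and the one map $\omega:e\to v$ then turns $A(n)\cong e\boxx_0 A(n)$ into $v\boxx_0 A(n)$, which is exactly the source on which $\gamma$ is defined; had I used $j$, the outer cofaces would retain an uncollapsible factor $e\boxx_1 A(n)$. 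Concretely, for $1\le i\le n$,
\[
d^i:\ A(n)\ \xrightarrow{\ \omega\ }\ v\boxx_0 A(n)\ \cong\ (\boxx_1^{n}v)\boxx_0 A(n)\ \xrightarrow{\ \Theta_i\ }\ \bigl(\boxx_1^{i-1}A(1)\boxx_1 A(2)\boxx_1 \boxx_1^{n-i}A(1)\bigr)\boxx_0 A(n)\ \xrightarrow{\ \gamma\ }\ A(n+1),
\]
where $\Theta_i:=(\boxx_1^{i-1}\iota\,\boxx_1\,\mu\,\boxx_1\,\boxx_1^{n-i}\iota)\boxx_0\id$ and I abbreviate by $\omega$ the composite $A(n)\cong e\boxx_0 A(n)\to v\boxx_0 A(n)$. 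The outer cofaces $d^0,d^{n+1}$ are defined analogously, but with $x$ placed into an inner slot of $\mu$ (so the outer leg is $\mu$ and the source is $A(n)\boxx_0 v$), and the codegeneracies $s^i:A(n)\to A(n-1)$ insert $\nu$ into one slot and $\iota$ into the remaining $n-1$ slots.

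With the maps in hand, functoriality on $\Delta$ follows from the epi--mono factorisation once the cosimplicial identities hold, so I would verify those. Each identity not involving a coincidence of indices is an instance of the associativity of $\gamma$ (which already encodes the duoidal interchange) applied to two insertions into disjoint slots; the adjacent relations among the $d^i$ reduce to associativity of $\mu$; and the relations $s^jd^j=\id=s^jd^{j+1}$ reduce to the monoid unit relations $\gamma(\mu;\nu,\iota)=\iota=\gamma(\mu;\iota,\nu)$ together with the fact that inserting $\iota$ acts as an identity under $\gamma$. This last fact is the $v$-normalised form of the operad unit axiom of Definition~\ref{Forcey-operad}, recovered by precomposing with $\omega$ and using $\iota\circ\omega=j$.

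The main obstacle is precisely this unit bookkeeping. Because the interchange map and the maps $e\to e\boxx_1 e$, $\omega:e\to v$ and $v\boxx_0 v\to v$ are not invertible, units cannot be moved around freely, and I must check that the single normalising map $\omega$ threaded through $A(n)\cong e\boxx_0 A(n)\to v\boxx_0 A(n)$ is inserted consistently on the two sides of each identity, so that the composites coincide on the nose rather than merely after some further coherence map. Granting the strictness reduction and the associativity/unit relations transported from $\fAss$, this amounts to a finite diagram chase for each of the three families of simplicial identities, which I expect to go through without surprises.
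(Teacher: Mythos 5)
Your construction coincides with the paper's proof: the inner cofaces insert $m_2$ at the $i$-th slot and $m_1$ (your $\iota:v\to A(1)$, rather than the operad unit $j:e\to A(1)$) in the remaining slots, normalised by the single canonical map $e\to v$, the outer cofaces place $A(n)$ into an inner slot of $m_2$, and the codegeneracies insert $m_0$ --- exactly the maps $f_i$ and $g_i$ of the paper's proof of Proposition~\ref{Lei}. The paper likewise stops after defining these maps, so your sketch of how the cosimplicial identities reduce to associativity of $\gamma$, associativity of $\mu$, and the $v$-normalised unit axiom (via $\iota\circ\omega=j$ and the comonoid counit property of $e$) is, if anything, more detailed than the original.
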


\begin{proof}
Assume that the multiplicative structure of $A$ is given by a
system $m_n : v \to A(n)$, $n \geq 0$, of $\duo$-maps.
We define the cosimplical structure on $A$ by specifying the actions
of the standard generating maps of $\Delta$. The coboundary
$d_0 : A(n) \to A(n+1)$ is the composition
\[
A(n) \!\cong\! \big(v \boxx_1 A(n)\big) \boxx_0 e
\!\to \!
\big(v \boxx_1 A(n)\big) \boxx_0 v 
\stackrel {(m_1 \boxx_1 \id) \boxx_0 m_2}\vvlra 
\big(A(1) \boxx_1 A(n)\big) \boxx_0 A(2) \stackrel \gamma \to A(n+1),
\]
where the second map uses the canonical morphism $c: e \to v$. 
Likewise, $d_{n+1} : A(n) \to A(n+1)$ is the
composition
\[
A(n)\! \cong\! \big(A(n) \boxx_1 v\big) \boxx_0 e
\!\to\! \big(A(n) \boxx_1 v\big) \boxx_0 v 
\stackrel {(\id \boxx_1 m_1) \boxx_0 m_2}\vvlra
\big(A(n) \boxx_1 A(1)\big) \boxx_0 A(2) \stackrel \gamma \to A(n+1).
\]
For $1\leq i \leq n$, the map $d_i: A(n) \to A(n+1)$ is the composition
\begin{align*}
A(n) \cong e \boxx_0 A(n)& \stackrel{c \boxx_0 \id}\vlra v
\boxx_0 A(n) \cong (\boxx_1^n v ) \boxx_0 A(n)
\stackrel{f_i \boxx_0 \id}\vlra
\\
&\hskip 1em
\stackrel{f_i  \boxx_0 \id}\vlra \big(\boxx_1^{i-1} A(1) 
\boxx_1 A(2) \boxx_1^{n-i} A(1)\big)\boxx_0 A(n) \stackrel\gamma\to A(n+1),
\end{align*}
where $f_i := \boxx_1^{i-1} m_1 \boxx_1 m_2 \boxx_1^{n-i} m_1$.
The cosimplicial degeneracies $s_i : A(n+1) \to A(n)$, $0
\leq i \leq n$, are the compositions
\begin{align*}
A(n+1) \cong e \boxx_0 A(n+1)& \stackrel{c \boxx_0 \id}\vlra v
\boxx_0 A(n+1) \cong (\boxx_1^{n+1} v ) \boxx_0 A(n+1)
\stackrel{g_i  \boxx_0 \id}\vlra
\\
&\hskip 1em
\stackrel{g_i  \boxx_0 \id}\vlra \big(\boxx_1^{i} A(1) 
\boxx_1 A(0) \boxx_1^{n-i} A(1)\big)\boxx_0 A(n+1) \stackrel\gamma\to A(n),
\end{align*}
where $g_i := \boxx_1^{i} m_1 \boxx_1 m_0 \boxx_1^{n-i} m_1$.
\end{proof}

\subsection{Hochschild object, center and homotopy center}
\label{sec:hochsch-object-homot}

Assume that $\duo$ is complete as a $V$-category. By
\cite[Theorem 3.73]{Kelly} this is equivalent to $\duo$ having small
conical limits and $V$-cotensors. The last condition  means that, for any $a\in V$
and $y\in \duo$, there exists an object $y^a\in\duo$ such that
\[
{\duo}(x,y^a)\simeq
{V}\big(a,{\duo}(x,y)\big)
\] 
naturally for all $x\in\duo.$
We also fix a cosimplicial object $\delta :
\Delta \to V$
in $V$. 
Then one defines the $\delta$-{\em totalization\/} 
of a cosimplicial object $\phi : \Delta
\to \duo$ as the $V$-enriched end
\[
\Tot_\delta(\phi) := \int_{n \in \Delta} \phi(n)^{\delta(n)} \in \duo.
\]

Let $A$ be a multiplicative $1$-operad in $\duo$. By
Proposition~\ref{Lei}, it determines a cosimplicial object in $\duo$
(denoted again by $A$).

\begin{definition}
The {\em Hochschild $\delta$-object\/} of a multiplicative $1$-operad
$A$ is defined as
\[
\CH_\delta(A) := \Tot_\delta(A).
\] 
\end{definition}

The endomorphism operad $\End_\ttM$ of a  monoid $\ttM$ in a
$\duo$-monoidal category $\calK$ is, by Proposition~\ref{sec:spn-mono-categ-2}, a
multiplicative $1$-operad in $\duo$. 

\begin{definition} 
The {\em $\delta$-center\/} 
of a monoid $\ttM$ is defined as
\[
\CH_\delta(\ttM,\ttM) := \CH_\delta(\End_\ttM).
\]
\end{definition}

If $\delta = I$ is the constant cosimplicial object that equals $I\in
V$ for all $n$, then the $\delta$-center $Z(\ttM) :=
\CH_I(\ttM,\ttM)$ will be called the {\em center} of $\ttM.$
Notice that, in general, the center of a monoid in $\calK$ lives in
the category $\duo$, not in $\calK.$ It is not difficult to see that
the center of a monoid $\ttM$ is given by the following equalizer in
$\duo$:
\begin{equation}
\label{equalizer}
Z(\ttM)\to {\calK}(\eta,\ttM)
%\raisebox{-.2em}{$
%{\def\arraystretch{.5}
%\begin{array}{c}
%\longrightarrow
%\\
%{\longrightarrow}
%\end{array}
%}
%$}
\eqv
{\calK}(\ttM, \ttM).
\end{equation}

\begin{example}
\label{centerofunit} 
A trivial example of a monoid in $\calK$ is the unit object $\eta$.
It is obvious from (\ref{equalizer}) that $Z(\eta) =\calK(\eta,\eta)$ and
that $\calK(\eta,\eta)$ is a duoid in $\duo.$
\end{example}

\begin{theorem}
\label{Za_pet_dni_stehovani.} 
Let $A$ be a multiplicative operad and $\delta = I$. Then the
Hochschild object $\CH_I(A)$ has the canonical structure of a duoid in
$\duo$. In particular, the center of a monoid $\ttM\in \calK$ has a
canonical structure of a duoid in $\duo.$
\end{theorem}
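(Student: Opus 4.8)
The plan is to exhibit the two monoid structures on $\CH_I(A)$ directly from the operadic data and then to verify the duoid coherences. First I would note that for the constant cosimplicial object $\delta=I$ the $V$-cotensor is trivial, $A(n)^{I}\cong A(n)$, so that
\[
\CH_I(A)=\Tot_I(A)=\int_{n\in\Delta}A(n)=\lim_\Delta A .
\]
Since the limit of a cosimplicial object is computed as the equalizer of the two coface maps out of the bottom degree, this realizes $\CH_I(A)$ as the equalizer of the two coface maps $A(0)\to A(1)$, which for $A=\End_\ttM$ is precisely~(\ref{equalizer}). Thus $\CH_I(A)$ is a subobject of $A(0)$, and the task is to equip it with a $\boxx_0$-monoid structure with unit $e$, a $\boxx_1$-monoid structure with unit $v$, and to check the duoid compatibilities $(\star)$ and $(\star\star)$.

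For the structure maps I would use the composition $\gamma$ of Definition~\ref{Forcey-operad}, the unit $j\colon e\to A(1)$, and the multiplicative system $m_n\colon v\to A(n)$ of Definition~\ref{sec:mutipl-1-oper}. The $\boxx_1$-product is the ``cup'' map obtained from $A(0)\boxx_1 A(0)\cong\big(A(0)\boxx_1 A(0)\big)\boxx_0 e$ followed by
\[
\big(A(0)\boxx_1 A(0)\big)\boxx_0 e
\xrightarrow{\ \id\,\boxx_0\, m_2c\ }\big(A(0)\boxx_1 A(0)\big)\boxx_0 A(2)
\xrightarrow{\ \gamma\ }A(0),
\]
where $c\colon e\to v$ is the canonical map of Definition~\ref{sec:spn-mono-categ-1}(v) and $m_2c$ denotes $e\xrightarrow{c}v\xrightarrow{m_2}A(2)$; its unit is $m_0\colon v\to A(0)$. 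The $\boxx_0$-product is the operadic composition $A(0)\boxx_0 A(1)\xrightarrow{\gamma}A(0)$ of arity one, precomposed in its two slots with the inclusion $\CH_I(A)\hookrightarrow A(0)$ and the canonical lift $\CH_I(A)\to A(1)$ supplied by the equalizer; its unit is $e\xrightarrow{c}v\xrightarrow{m_0}A(0)$, equivalently the element of $A(0)$ whose lift to $A(1)$ is $j$. For $A=\End_\ttM$ these specialize to the two products on the center anticipated in Example~\ref{centerofunit}: the $\boxx_1$-product is $m\circ(-\odot-)$ and the $\boxx_0$-product is composition after lifting a central element to an endomorphism.

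Next I would check that all four maps factor through the equalizer; this is the exact analogue of the classical statement that the center is closed under multiplication. Using the naturality of $\gamma$ with respect to the coface maps of Proposition~\ref{Lei}, together with the fact that both arguments already equalize $A(0)\to A(1)$, one shows that the two cofaces agree on each product and on each unit. The associativity and unit axioms for the two products then reduce to the operad associativity of Definition~\ref{Forcey-operad} and the relations among the $m_n$ forced by the operad map $\fAss\to A$ (for instance $m_{p+q}=\gamma\big((m_p\boxx_1 m_q)\boxx_0 m_2\big)$ up to canonical isomorphism); these computations involve degree $3$, although the structure maps themselves live in degrees $0,1,2$.

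The hard part will be the duoid compatibility, and in particular $(\star\star)$, which asserts that the two multiplications interchange. Here the middle interchange transformation of $\duo$ from Definition~\ref{sec:spn-mono-categ-1}(ii) must be matched against the single associativity coherence of $\gamma$ that governs the simultaneous use of the $\boxx_1$-slot (combining inputs) and the $\boxx_0$-slot (grafting into the outer operation): writing $\ttD=\CH_I(A)$ and starting from $(\ttD\boxx_1\ttD)\boxx_0(\ttD\boxx_1\ttD)$, one applies the interchange of $\duo$ and then operad associativity to identify the two induced maps into $A(0)$. Condition $(\star)$, that the $\boxx_1$-unit $m_0$ is a $\boxx_0$-monoid morphism, follows in the same spirit from the compatibility of $m_0$ and $m_2$ with $\gamma$. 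Finally, the ``in particular'' clause is immediate: by Proposition~\ref{sec:spn-mono-categ-2} the endomorphism operad $\End_\ttM$ is multiplicative, so $\CH_I(\ttM,\ttM)=\CH_I(\End_\ttM)$ inherits the duoid structure just constructed.
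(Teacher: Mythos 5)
Your proposal is correct and takes essentially the same route as the paper's proof: the paper likewise realizes $\CH_I(A)$ as the equalizer of $d_0,d_1\colon A(0)\to A(1)$, defines the $\boxx_1$-product by the identical composite through $\big(A(0)\boxx_1 A(0)\big)\boxx_0 A(2)$, takes the same units (its $\boxx_1$-unit $s_0\circ m_1$ equals your $m_0$ because $m\colon\fAss\to A$ is an operad map), and likewise leaves the tedious coherence verifications to the reader. Your $\boxx_0$-product $\gamma\circ(\pi\boxx_0 d_0\pi)$ is only a superficial repackaging of the paper's $s_0\circ\gamma\circ(d_0\pi\boxx_0 d_0\pi)$, the two agreeing by operad associativity together with the cosimplicial identity $s_0d_0=\id$.
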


\begin{proof}
We describe the structure morphisms for the duoid $\CH_I(A).$ 
To construct a morphism
\[
\CH_I(A)\boxx_0\CH_I(A)\to \CH_I(A)
\]
it is enough to construct a morphism  $\CH_I(A)\boxx_0\CH_I(A)\to
A(0)$ which equalizes the coboundary operators
$d_0,d_1: A(0)\to A(1)$. One can take the composite
\[
\CH_I(A)\boxx_0\CH_I(A)\stackrel{\pi\boxx_0 \pi}{\vlra} A(0)\boxx_0 A(0)
%\raisebox{-.2em}{$
%{\def\arraystretch{.5}
%\begin{array}{c}
%\longrightarrow
%\\
%{\longrightarrow}
%\end{array}
%}
%$}
\stackrel{d_0\boxx_0 d_0}{\vlra}
A(1)\boxx_0 A(1) \to A(1) \stackrel{s_0}{\to} A(0),
\]
in which $\pi: \CH_I(A) \to A(0)$ is the canonical map. Observe that,
since $\pi$ equalizes $d_0$ and $d_1$, instead of $d_0 \boxx_0 d_0$,
one could have taken, in the above composition, $d_i \boxx_0 d_j$ with
arbitrary $i,j \in \{0,1\}$.
Analogously, we construct a  morphism
\[
\CH_I(A)\boxx_1\CH_I(A)\to \CH_I(A)
\]
using the composite
\begin{align*}
\CH_I(A)\boxx_1\CH_I(A)&\to A(0)\boxx_1 A(0) \simeq \big(A(0)\boxx_1
A(0)\big)\boxx_0 e \to
\\
 \to& \big(A(0)\boxx_1 A(0)\big)\boxx_0 v \to
\big(A(0)\boxx_1 A(0)\big)\boxx_0 A(2) \to A(0).
\end{align*}
 We define the unit 
\[
v\rightarrow \CH_I(A)
\] 
for the second product using the composite
\[
v\to A(1)\stackrel{s_0}{\to} A(0)
\]
and the unit for the first product by  composing
\[
e\to v\to \CH_I(A).
\] 
We leave a long, tedious, but straightforward verification of the
correctness of our definitions, as well as the verification of the duoid
axioms to the reader.
\end{proof}

C.~Barwick developed in \cite{barwick}  a notion of a model category
enriched in a monoidal model category. We can easily adapt his
definition to the situation of an enrichment over a duoidal category.
So, let us assume that $V$ is a monoidal model category, the category
$\duo$ is a model category which is a monoidal model $V$-category for
each of the monoidal structures on $\duo$, and $\calK$ is a
$\duo$-monoidal model category.  In this case one can speak about a
{\it standard system of simplices} for $V$ in the sense of
\cite[Definition~A.6]{BergerMoerdijk} (see also \cite{batanin-berger},
Section 3.5).  Let $\delta$ be such a standard system of simplices for
$V.$ We also assume that there is a model structure on the category of
monoids in $\calK$ and $Fb(\ttM)$ is a fibrant replacement for a
monoid $\ttM$.

\begin{definition} 
The $\delta$-center $\CH_\delta\big(Fb(\ttM),Fb(\ttM)\big)$ will be called the
{\em homotopy center} of $\ttM$ and will be denoted $\CH(\ttM,\ttM)$.
\end{definition}

\begin{remark} 
The homotopy aspects of the theory of the center are out of the scope of this
paper and will be considered in the sequel
\cite{batanin-berger-markl:homotopycenter}.  We will show there that,
under some, not very restrictive, technical conditions the notion of
homotopy center does not depend (up to homotopy) on the standard
system of simplices we use (see Example~\ref{JArca} for
illustration). This justifies our terminology.
\end{remark}

\begin{example}
Let $V=\Set$ and let $\duo$ be a closed braided monoidal category,
i.e~$\duo$ is enriched over itself. Then the center of a monoid $\ttM$ in
$\duo$ is the equalizer
\[
Z(\ttM)\to\ttM \simeq {\duo}(e,\ttM)
\eqv
{\duo}(\ttM, \ttM).
\]
where the two arrows are induced by the left and right multiplication in
$\ttM.$ Therefore, $Z(\ttM)$ is the classical center of $\ttM.$
\end{example}

\begin{example}
\label{JScenter} 
Let $V= \duo = \calK = \Cat.$ A monoid $\ttM$ in $\Cat$ is a strict
monoidal category. Let $\delta$ be the cosimplicial object in $\Cat$
whose $n$-th space is equal to the chaotic groupoid with $n+1$
objects. This is a standard system of simplices in $\Cat$ if we equip
$\Cat$ with a Joyal-Tirney model structure for which weak equivalences
are categorical equivalences. In this model structure all objects in
$Cat$ are fibrant, hence the $\delta$-center of $\ttM$ is its
homotopy center and is equal to the Joyal-Street center of $\ttM$
\cite{JS}.

\end{example}
 
\begin{example}\label{laxcenter} 
If we use, in the previous example, $\delta$ which in dimension $n$
equals to the free category on a linear graph with $n+1$ objects
$${\bullet_0 \to \bullet_1 \to \ldots \to \bullet_{n-1} \to
\bullet_n}$$ then the $\delta$-center of $\ttM$ is its lax-center (or
colax if we reverse the orientation in $\delta$), see~\cite{DayStreet}.
\end{example} 

\begin{example} 
\label{JArca} 
Let $k$ be a commutative ring and let $V = \duo = \calK = \Chain$ the
category of chain complexes of $k$-modules.  Let $\delta :=
C_*(\Delta^\bullet)$, the complex of normalized simplicial chains on
the standard simplicial simplex $\Delta^\bullet$.  This $\delta$ is a
standard system of simplices.  The $\delta$-center of a monoid $\ttM$
(i.e.\ a unital differential graded algebra) is its normalized
Hochschild complex. It is the homotopy center of $\ttM$.

Instead of $\delta$ given by normalized chains we can take
$\tilde{\delta}=Lan_i(\delta_{un})$, the left Kan extension of $\delta_{un}:
\Delta_{in}\to \Chain$ given by un-normalized chains. Here
$i:\Delta_{in}\subset \Delta$ is the subcategory of injections.  As
follows from \cite[Proposition~A.6]{batanin-berger-markl} and the
discussion in the appendix to that paper, the $\tilde{\delta}$-center of
$\ttM$ will be the unnormalized Hochschild complex of $\ttM.$ It is
classical that $CH_{\delta}(\ttM)$ is weakly equivalent to
$CH_{\tilde{\delta}}(\ttM)$.
\end{example}

\section{The duoidal category $\funny(\cat,\duo)$}
\label{sec:spn-categories}

Let us fix a small (with respect to some universe $\bbU$) category
$\cat$ with the set of objects $\cat_0$, set of arrows $\cat_1$,
source and target maps $s_\cat,t_\cat : \cat_1 \to \cat_0$, and the
identity map $i_\cat : \cat_0 \to \cat_1$. We also fix a duoidal
$V$-category $\duo = (\duo,\boxx_0,\boxx_1,e,v)$ as in
Definition~\ref{sec:spn-mono-categ-1}. We assume, in addition, that
$\duo$ has small products and coproducts and both $\boxx_0$ and
$\boxx_1$ commute with coproducts in each variable.

\begin{remark}
\label{sec:first-mono-struct-1} 
In our applications we often assume that $\cat$ is a large category
with respect to $\bbU$ such as the category of small categories.  This
is not, however, a big obstacle. Let $\bbU' \supset \bbU$ be a bigger
universe with respect to which the set of objects of $\cat$ is a small set
i.e.~$\cat_0 \in \bbU'$.  There is a standard procedure in enriched
category theory described in \cite[Section 3.11]{Kelly} known as the
{\em universe enlargement\/} which allows to embed a symmetric
monoidal category~$V$ in an essentially unique manner to a larger
symmetric monoidal category $V'$ in a way that this embedding
preserves all limits and colimits which exist in $V$, but $V'$ also
admits large (with respect to $\bbU$) limits and colimits which are
small with respect to $\bbU'.$ 

This embedding is based on the argument of Day \cite{Day} which uses
the convolution tensor product (Day convolution) on the presheaf
category $\mathcal{SET}^{V^{\it op}}.$ Here $\mathcal{SET}$ is a
version of the category of sets based on the universe $\bbU'$. It is
not difficult to check that Day's argument works equally well for a
duoidal category $\duo$ so we can embed $\duo$ to a larger duoidal
category $\duo'$ which admits all necessary limits and colimits.  Due
to this consideration, we can always assume that $V$ and $\duo$ are
large enough to form limits and colimits we need.
\end{remark}

\begin{definition}
A {\em globe\/} (in $\cat$) is a diagram
\begin{equation}
\label{globe}
\glb ABfg := \globe ABfg, 
\end{equation}
where $A,B \in \cat_0$ are objects of $\cat$ and $f,g : A \to B$
their morphisms.
\end{definition}

For $\sfG $ as in~(\ref{globe}) we set $s(\sfG):= f$, $t(\sfG):= g$, 
$S(\sfG):= A$ and $T(\sfG):= B$ (the {\em source, target, supersource and
supertarget of $\sfG$\/}, respectively). We will often need the 
`trivial' globes
\begin{subequations}
\begin{eqnarray}
\label{triv-globe}
\posun{-1.4}\posun{1.7}
\sfG(A)&:=& \glb AA{\id_A}{\id_A} = \globe AA{\id_A}{\id_A},\ A \in
\cat_0, \ \mbox { and}
\\
\label{triv-globe1}
\sfG(f)&:=& \glb ABff = \globe ABff,\ f: A \to B \in
\cat_1.
\end{eqnarray}
\end{subequations}

\begin{definition}
\label{funny-object}
Let $\duo$ be a duoidal $V$-category as above. A {\em \spn\
$\duo$-object over $\cat$\/} (or simply a {\em \spn\ object\/}) is a
system $Y = \{Y_\sfG\}$ of objects of $\duo$ indexed by globes in
$\cat$. A~{\em morphism\/} $F : Y' \to Y''$ of \spn\ objects is a
system of morphisms $\{F_\sfG \in \duo(Y'_\sfG,Y''_\sfG)\}$ indexed by
globes in $\cat$.
\end{definition}

We denote by $\funny(\cat,\duo)$ or simply by $\funny$ if $\cat$
and $\duo$ are understood, the $V$-category of \spn\ $\duo$-objects over $\cat$
and their morphisms.

\begin{example}
If $\duo = V$ is the category $\Set$ of sets, a \spn\
$\Set$-object, or a {\em \spn\ set\/} for short, is the same as a
diagram of sets
\begin{equation}
\label{funny-diagram}
\begin{array}{c}
Y
\\
\kategorie st
\\
\cat_1
\\
\kategorie {s_\cat}{t_\cat}
\\
\cat_0
\end{array}
\end{equation}
in which $s_\cat s = s_\cat t$ and $t_\cat t = t_\cat s$. 
Indeed, for $Y$ is as in the above diagram, 
we define the {\em fiber\/} over a globe $\sfG $~as
\begin{equation}
\label{eq:1}
Y_\sfG := \{y \in Y;\ s(y) = s(\sfG),\ t(y) = t(\sfG)\}. 
\end{equation}
The system $\{Y_\sfG\}$ of fibers is then a \spn\ set in the sense of
Definition~\ref{funny-object}. 

On the other hand, any collection $Y_\sfG$ of sets indexed by globes
in $\cat$ assembles into the disjoint union $Y :=
\bigcup_{\sfG}Y_\sfG$. The maps $s,t : Y \to \cat_1$
defined by $s(y) := s(\sfG)$, $t(y) := t(\sfG)$ for $y \in Y_{\sfG}$,
are then as in diagram~(\ref{funny-diagram}).

We call the composition $S := s_\cat
s: Y \to \cat_0$ (resp.~$T := t_\cat t: Y \to \cat_0$) the {\em
supersource\/} (resp.~the {\em supertarget\/}) map.
\end{example}

\begin{convention}
\label{conv}
We will visualize \spn\ $\duo$-objects as
diagrams~(\ref{funny-diagram}) even when $\duo$ is a general duoidal
category so the disjoint union $Y := \bigcup_{\sfG}Y_\sfG$ does not
have a formal sense. We can then think of $Y$ as of a set fibered over
the globes in $\cat$, with the fibers objects of $\duo$.
\end{convention}

\subsection{The first monoidal structure}
\label{sec:first-mono-struct}\def\tmes{\times}
Let $\funny = \funny(\cat,\duo)$ be as in
Definition~\ref{funny-object}. For \spn\ objects $Y_i =
\{Y_{i,\sfG}\}\in \funny$, $i=1,2$, define $Y_1 \tmes_0 Y_2 = 
\{(Y_1 \tmes_0 Y_2)_\sfG\} \in \funny$ by  
\begin{equation}
\label{mon1}
(Y_1 \tmes_0 Y_2)_\sfG:=\coprod_{\sfG_1,\sfG_2}Y_{1,\sfG_1}\boxx_0 Y_{2,\sfG_2}, 
\end{equation}
where the coproduct is taken over all globes $\sfG_1,\sfG_2 $ that
decompose $\sfG$ in the sense that $T(\sfG_1) = S(\sfG_2)$ and
\[
s(\sfG) = s(\sfG_2)s(\sfG_1),\ t(\sfG) = t(\sfG_2)t(\sfG_1)\
\mbox { (the composition in $\cat$)}.
\]
If we think of $Y_1$ and $Y_2$ in terms of
diagrams~(\ref{funny-diagram}), then $Y_1 \tmes_0 Y_2$ is the pullback
\begin{equation}
\label{eq:10}
\pullback{\cat_o}{tt}{ss}{Y_1}{Y_2}{Y_1 \tmes_0 Y_2}.
\end{equation}

The above construction clearly extends into a functor $\tmes_0 : \funny
\times \funny \to \funny$.  Let $0 \in \duo$ be the initial object and
recall that $e\in \duo$ is the unit for $\boxx_0$. Denote by $\II0
= \{\II0_\sfG\} \in \funny$ the object defined by
\begin{equation*}
\label{u1}
\II0_\sfG := \cases {e}{if $\sfG$ is the globe $\sfG(A)$ in~(\ref{triv-globe})
  for some $A \in \cat_0$, and}{0}{otherwise.}
\end{equation*}
It is easy to see that $\II0$ is a two-sided unit for $\tmes_0$.
Observe that, if $\duo = V = \Set$, then $\II0$ is
given by the diagram
\[
\begin{array}{c}
\cat_0
\\
\kategorie {i_\cat}{i_\cat}
\\
\hphantom{.} \hskip 1em \cat_1 \hskip 1em .
\\
\kategorie {s_\cat}{t_\cat}
\\
\cat_0
\end{array}
\] 

\subsection{The second monoidal structure}
For \spn\ $V$-objects $Y_i =
\{Y_{i,\sfG}\}\in \funny$, $i=1,2$, define $Y_1 \tmes_1 Y_2 = 
\{(Y_1 \tmes_1 Y_2)_\sfG\} \in \funny$ by  
\begin{equation}
\label{mon2}
(Y_1 \tmes_1 Y_2)_\sfG :=\coprod_{\sfG_1,\sfG_2} Y_{1,\sfG_1}\boxx_1 Y_{2,\sfG_2}, 
\end{equation}
where $\boxx_1$ is the second monoidal structure of $\duo$ and the coproduct is
taken over all globes $\sfG_1,\sfG_2 $ such that
\begin{equation}
\label{eq:13}
s(\sfG) = s(\sfG_1),\
t(\sfG_1) =
s(\sfG_2) \mbox { and } t(\sfG) = t(\sfG_2).
\end{equation}
In terms of diagrams~(\ref{funny-diagram}), $Y_1 \tmes_1 Y_2$ is the pullback
\[
\posun{3.75}
\pullback{\cat_1}ts{Y_1}{Y_2}{Y_1 \tmes_1 Y_2}
\]
which has to be compared to the pullback~(\ref{eq:10}) defining the
$\tmes_0$-product.  Let $\II1 = \{\II1_\sfG\} \in \funny$ be the object
with
\begin{equation*}
\label{u2}
\II1_\sfG := \cases {v}{if $\sfG$ is the globe $\sfG(f)$ of~(\ref{triv-globe1})
  for some $A \stackrel f\to B  \in \cat_1$, and}{0}{otherwise.}
\end{equation*}
In the diagrammatic language, $\II1$ is the diagram
\[
\begin{array}{c}
\cat_1
\\
\kategorie {\id}{\id}
\\
\cat_1
\\
\kategorie {s_\cat}{t_\cat}
\\
\cat_0
\end{array}  \hskip .5em.
\] 
It is clear that $\II1$ is a two-sided unit for $\tmes_1$. To construct the
canonical map $\II1\tmes_0 \II1 \to \II1$, observe that $(\II1\tmes_0 \II1)_\sfG
\not=0$ only if $\sfG = \sfG(f)$ for some morphism $f$ in $\cat$, in
which case one has the composition
\begin{equation}
\label{eq:15}
(\II1\tmes_0 \II1)_{\sfG(f)} = \coprod_{f = f_2f_1} \II1_{\sfG(f_1)} \boxx_0
\II1_{\sfG(f_2)} =  \coprod_{f = f_2f_1} v \boxx_0 v  \longrightarrow
\coprod_{f = f_2f_1} v,
\end{equation}
in which the last arrow is the map (iv) of
Definition~\ref{sec:spn-mono-categ-1}.  We define the component
$(\II1\tmes_0 \II1)_{\sfG} \to \II1_{\sfG}$ of the structure map $\II1\tmes_0 \II1
\to \II1$ as the composition of the map~(\ref{eq:15}) with the folding
map $\coprod_{f = f_2f_1} v \to v = \II1_\sfG$ if $\sfG = \sfG(f)$
for some $f$, and as the unique map $0 \to 0$ in the remaining cases.

It is clear that, for the object $\II0$
defined in Subsection~\ref{sec:first-mono-struct},
$(\II0\tmes_1 \II0)_\sfG \not = 0$ only if $\sfG = \sfG(A)$ for some $A \in
\cat_0$, in which case
\[
(\II0\tmes_1\II0)_{\sfG(A)} = e \boxx_1 e.
\]
We define the
structure map $\II0 \to \II0\tmes_1 \II0$ to be the map induced, in the obvious
way, by (iii) of Definition~\ref{sec:spn-mono-categ-1}.

Let us describe the interchange law
$(A \tmes_1 B) \tmes_0 (C \tmes_1 D) \to (A \tmes_0 C) \tmes_1 (B
\tmes_0 D)$. {}From the definitions~(\ref{mon1}) and~(\ref{mon2}) of the
products $\tmes_0$ and $\tmes_1$  we get that
\[
(A \tmes_1 B) \tmes_0 (C \tmes_1 D)_{\sfG} =
\coprod_{(\sfG^u_1,\sfG^u_2,\sfG^d_1,\sfG^d_2) \in {\sf L_\sfG}}
(A_{\sfG^u_1} \boxx_1 B_{\sfG^d_1}) \boxx_0(C_{\sfG^u_2}
\boxx_1 D_{\sfG^d_2}),
\]
where ${\sf L_\sfG}$ is the set of all globes
$\sfG^u_1,\sfG^u_2,\sfG^d_1,\sfG^d_2$ such that
\begin{subequations}
\begin{align}
\label{p1}
T(\sfG^u_1) = S(\sfG^u_2),\ T(\sfG^d_1) = S(\sfG^d_2),&\
s(\sfG) = s(\sfG^u_2)s(\sfG^u_1),\ t(\sfG) = t(\sfG^d_2)t(\sfG^d_1)\
\mbox { and}
\\
\label{p2}
t(\sfG^u_1) = s(\sfG^d_1),& \ t(\sfG^u_2) = s(\sfG^d_2).  
\end{align}
\end{subequations}
The `configuration' of the globes
$(\sfG^u_1,\sfG^u_2,\sfG^d_1,\sfG^d_2) \in {\sf L_\sfG}$ is schematically
depicted as 
\begin{center}
{% Picture saved by xtexcad 2.4
\unitlength=1.000000pt
\begin{picture}(230.00,60.00)(0.00,0.00)
\thicklines
\put(150.00,17.00){\makebox(0.00,0.00){$\sfG^d_2$}}
\put(150.00,40.00){\makebox(0.00,0.00){$\sfG^u_2$}}
\put(50.00,17.00){\makebox(0.00,0.00){$\sfG^d_1$}}
\put(50.00,40.00){\makebox(0.00,0.00){$\sfG^u_1$}}
\put(210.00,25.00){\makebox(0.00,0.00){.}}
\put(200.00,30.00){\vector(1,1){0.00}}
\put(200.00,30.00){\vector(1,-1){0.00}}
\put(100.00,30.00){\vector(1,1){0.00}}
\put(100.00,30.00){\vector(1,-1){0.00}}
\put(100.00,30.00){\vector(1,0){100.00}}
\put(0.00,30.00){\vector(1,0){100.00}}
\qbezier(100.00,30.00)(150.00,-20.00)(200.00,30.00)
\qbezier(100.00,30.00)(150.00,80.00)(200.00,30.00)
\qbezier(0.00,30.00)(50.00,-20.00)(100.00,30.00)
\qbezier(0.00,30.00)(50.00,80.00)(100.00,30.00)
\end{picture}}
\end{center}
It is equally clear that
\[
(A \tmes_0 C) \tmes_1 (B \tmes_0 D)_{\sfG} =
\coprod_{(\sfG^u_1,\sfG^u_2,\sfG^d_1,\sfG^d_2) \in {\sf R_\sfG}}
(A_{\sfG^u_1} \boxx_0 C_{\sfG^u_2}) \boxx_1 
(B_{\sfG^d_1} \boxx_1  D_{\sfG^d_2}),  
\]
where ${\sf R_\sfG}$ is the set of all globes
$\sfG^u_1,\sfG^u_2,\sfG^d_1,\sfG^d_2$ satisfying~(\ref{p1}), but instead
of~(\ref{p2}), a weaker condition
\[
s(\sfG^u_2)s(\sfG^u_1) = t(\sfG^d_2)t(\sfG^d_1).
\]
It is clear that ${\sf L_\sfG} \subset {\sf R_\sfG}$. A `configuration' 
that belongs to ${\sf R_\sfG}$ but not to ${\sf L_\sfG}$ is portrayed below:
\begin{center}
{% Picture saved by xtexcad 2.4
\unitlength=1.000000pt
\begin{picture}(230.00,63.50)(0.00,0.00)
\thicklines
\put(80.00,30.00){\vector(1,-1){0.00}}
\put(120.00,30.00){\vector(3,2){0.00}}
\put(80.00,30.00){\vector(1,0){40.00}}
\put(0.00,30.00){\vector(1,0){80.00}}
\qbezier(0.00,30.00)(40.00,80.00)(80.00,30.00)
\qbezier(120.00,30.00)(160.00,-20.00)(200.00,30.00)
\qbezier(0.00,30.00)(60.00,-20.00)(120.00,30.00)
\qbezier(80.00,30.00)(140.00,80.00)(200.00,30.00)
\put(160.00,17.00){\makebox(0.00,0.00){$\sfG^d_2$}}
\put(140.00,40.00){\makebox(0.00,0.00){$\sfG^u_2$}}
\put(60.00,17.00){\makebox(0.00,0.00){$\sfG^d_1$}}
\put(40.00,40.00){\makebox(0.00,0.00){$\sfG^u_1$}}
\put(210.00,25.00){\makebox(0.00,0.00){.}}
\put(200.00,30.00){\vector(1,1){0.00}}
\put(200.00,30.00){\vector(1,-1){0.00}}
\put(100.00,30.00){\vector(1,0){100.00}}
\end{picture}}
\end{center}
The $\sfG$-component of the interchange law is defined as the map of
coproducts induced by the inclusion ${\sf L_\sfG} \hookrightarrow 
{\sf R_\sfG}$ of the indexing sets, precomposed with the map
\[
\coprod_{(\sfG^u_1,\sfG^u_2,\sfG^d_1,\sfG^d_2) \in {\sf L_\sfG}}
(A_{\sfG^u_1} \boxx_1 B_{\sfG^d_1}) \boxx_0(C_{\sfG^u_2}
\boxx_1 D_{\sfG^d_2}) \longrightarrow
\coprod_{(\sfG^u_1,\sfG^u_2,\sfG^d_1,\sfG^d_2) \in {\sf L_\sfG}}
(A_{\sfG^u_1} \boxx_0 C_{\sfG^u_2}) \boxx_1 
(B_{\sfG^d_1} \boxx_1  D_{\sfG^d_2}) 
\]
induced by the interchange law in $\duo$.

\begin{theorem}
\label{sec:second-mono-struct}
The object $\funny(\cat,\duo) = 
(\funny(\cat,\duo),\tmes_0,\tmes_1,\II0,\II1)$ constructed above 
is a duoidal $V$-category in the sense of
Definition~\ref{sec:spn-mono-categ-1}. Suppose moreover that $\duo$
is $V$-complete. Then $\funny(\cat,\duo)$ is also $V$-complete. 
\end{theorem}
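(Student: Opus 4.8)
The plan is to reduce every assertion to a globewise, summandwise statement in $\duo$, using that both $\tmes_0$ and $\tmes_1$ are built fiberwise as coproducts of $\boxx_0$- and $\boxx_1$-products and that, by our standing hypothesis, $\boxx_0$ and $\boxx_1$ commute with coproducts in each variable. The first thing I would record is that the initial object $0\in\duo$ is then a strict annihilator for both tensor products (it is the empty coproduct, so $0\boxx_0 X\cong 0$ and $0\boxx_1 X\cong 0$); hence in any fiber only summands all of whose globe-indices are "nontrivial" survive. The second is that iterated products compute as coproducts over configurations of globes: for instance both $\big((Y_1 \tmes_0 Y_2)\tmes_0 Y_3\big)_\sfG$ and $\big(Y_1 \tmes_0 (Y_2\tmes_0 Y_3)\big)_\sfG$ are the coproduct of $Y_{1,\sfG_1}\boxx_0 Y_{2,\sfG_2}\boxx_0 Y_{3,\sfG_3}$ over triples of horizontally composable globes, and likewise for $\tmes_1$ over vertically composable globes.

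First I would establish the two monoidal structures. For associativity of $\tmes_0$ the indexing sets of the two bracketings coincide because composition in $\cat$ is associative, and on matching summands the associator is that of $(\duo,\boxx_0,e)$, so the pentagon follows summandwise from the pentagon in $\duo$; the same argument with vertical composition handles $\tmes_1$. For the unit $\II0$, the decomposition conditions $T(\sfG_1)=S(\sfG_2)$, $s(\sfG)=s(\sfG_2)s(\sfG_1)$, $t(\sfG)=t(\sfG_2)t(\sfG_1)$ together with $\II0_{\sfG_1}=0$ unless $\sfG_1$ is a trivial globe $\sfG(A)$ force the only surviving summand to be $e\boxx_0 Y_\sfG\cong Y_\sfG$, with strict annihilation disposing of all other terms; thus $\II0$ is a two-sided unit. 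The identical pattern, using~(\ref{eq:13}) and $\II1_{\sfG}=v$ only on globes $\sfG(f)$, shows $\II1$ is a two-sided unit for $\tmes_1$.

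Next I would treat the duoidal data and their coherence. The interchange transformation and the maps $\II0\to\II0\tmes_1\II0$, $\II1\tmes_0\II1\to\II1$, $\II0\to\II1$ have all been written down above; $V$-naturality of the interchange follows from $V$-naturality of the interchange in $\duo$ together with functoriality of the coproduct. Each coherence diagram of Definition~\ref{sec:spn-mono-categ-1}---the two associativity hexagons, the four unitality squares, and the compatibility requiring $\II1$ to be a monoid in $(\funny,\tmes_0,\II0)$ and $\II0$ a comonoid in $(\funny,\tmes_1,\II1)$---I would verify in each fiber and on each summand, where it becomes exactly the corresponding axiom of $\duo$; the last two reduce precisely to the hypotheses that $v$ is a monoid in $(\duo,\boxx_0,e)$ and $e$ a comonoid in $(\duo,\boxx_1,v)$.

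The hard part will be the combinatorial bookkeeping attached to the interchange, whose $\sfG$-component is the coproduct map induced by the inclusion ${\sf L_\sfG}\hookrightarrow{\sf R_\sfG}$ composed summandwise with the interchange of $\duo$. For the interchange coherence hexagons I must check that the two composites induce the \emph{same} map of indexing sets and agree summand by summand; because the maps (iii) and (iv) are not invertible, I would take care that the summands in ${\sf R_\sfG}\setminus{\sf L_\sfG}$ are routed consistently so that commutativity is preserved. This amounts to checking that the nested inclusions of configuration sets commute with reassociation of composites in $\cat$, which is delicate but routine. Finally, for $V$-completeness I would invoke \cite[Theorem 3.73]{Kelly}: conical limits and $V$-cotensors in $\funny$ are formed globewise, $(\lim_j Y^j)_\sfG=\lim_j Y^j_\sfG$ and $(Y^a)_\sfG=(Y_\sfG)^a$ (the products over globes existing since $V$ is complete), so the assumed $V$-completeness of $\duo$ transports globewise to $\funny(\cat,\duo)$.
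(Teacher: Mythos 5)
Your proposal is correct and takes essentially the same route as the paper: the paper leaves the verification of the duoidal structure as an exercise (your globewise, summandwise reduction using distributivity of $\boxx_0,\boxx_1$ over coproducts and annihilation by the initial object $0$ is exactly the intended argument), and its proof of $V$-completeness is precisely your globewise one. Namely, conical limits are formed fiberwise, and the cotensor is defined by $(Y^a)_\sfG := (Y_\sfG)^a$ and checked via the chain $\funny(X,Y^a)\cong\prod_\sfG \duo(X_\sfG,Y^a_\sfG)\cong\prod_\sfG V\big(a,\duo(X_\sfG,Y_\sfG)\big)\cong V\big(a,\funny(X,Y)\big)$, which is the computation you invoke through \cite[Theorem 3.73]{Kelly}.
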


\begin{proof}
We leave the proof of the first part as an exercise. If $V$ is
complete, it is clear that $\funny(\cat,\duo)$ has all conical limits.
Let us prove that $\funny(\cat,\duo)$ has $V$-cotensors if $\duo$ has.
For $v \in V$ and $Y = \{Y_\sfG\}\in \funny(\cat,\duo)$ put $Y^v :=
\{Y^v_\sfG\} \in \funny(\cat,\duo)$, where $Y^v_\sfG$ is the
$V$-cotensor of $Y_\sfG \in \duo$.  Let us verify that this formula
defines a cotensor in $\funny(\cat,\duo)$. For $X = \{X_\sfG\} \in
\funny(\cat,\duo)$ one has
\begin{align*}
{\funny}(X,Y^v) & \cong
\textstyle\prod_\sfG \duo(X_\sfG,Y^v_\sfG) 
\cong \prod_\sfG V\big(v,\duo(X_\sfG,Y_\sfG)\big) 
\cong  V\big(v,\prod_\sfG\duo(X_\sfG,Y_\sfG)\big) 
\\
&\cong
V\big(v,{\funny}(X,Y)\big)
\end{align*}
as required.
\end{proof}

Observe that, if $\duo = V$, i.e.~if $\boxx_0 = \boxx_1 = \ot$ and
$e=v=I$, then  the structure map $\II0 \to \II0
\tmes_1 \II0$ of \ $\funny(\cat,\duo)$ is an isomorphism. Also, the
assumption of the second part of
Theorem~\ref{sec:second-mono-struct} is satisfied, therefore 
the category $\funny(\cat,\duo)$ has cotensors.

\begin{example}
If $\cat$ is the one-object, one-morphism category,
the duoidal category $\funny(\cat,\duo)$ is isomorphic to the basic duoidal
category $\duo$.
\end{example}

\begin{example} 
If $\duo = \Set$, then $\funny(\cat,\duo)$ is the category of
derivation schemes introduced by R.~Street in \cite{StH}. If, in addition,
$\cat$ is the free category on a graph $G$, then $\funny(\cat,\duo)$ is
the category of $2$-computads in the sense of Street, whose
$1$-truncation is $G$.
\end{example}

\section{Span categories and span operads}

{}From now on we will assume that $\duo = (\duo,\boxx_0,\boxx_1,e,v)$ is
a duoidal category which is $V$-complete, has coproducts, and both
monoidal structures in $\duo$ preserve coproducts in each variable.

\begin{definition}
\label{span-cat}
A {\em \spn\ $\duo$-category\/} is a category enriched over the
$V$-monoidal category $\funny =
(\funny(\cat,\duo),\tmes_0,\II0)$.
\end{definition}

By expanding the above definition, one sees that 
a \spn\ category $\calA$ consists of a class $\Ob(\calA)$ of objects
and of \spn\ $\duo$-sets $\calA(\ttE,\ttF)$ given for any $\ttE,\ttF \in
\Ob(\calA)$, equipped with the composition
\begin{subequations}
\begin{equation}
\label{eq:2}
\circ: \calA(\ttE,\ttF)_\sfG \boxx_0 \calA(\ttF,\ttG)_\sfF \to 
\calA(\ttE,\ttG)_{\sfF \sfG} \  \mbox { (a map in $\duo$)}
\end{equation}
defined for all objects $\ttE,\ttF,\ttG \in \Ob(\calA)$ and globes
$\sfG, \sfF $ satisfying $S(\sfF) = T(\sfG)$. In~(\ref{eq:2}),
$\sfF \sfG$ denotes the globe $\glb
{S(\sfG)}{T(\sfF)}{s(\sfF)s(\sfG)}{t(\sfF)t(\sfG)}$.  Still more explicitly,
the compositions are $\duo$-maps
\begin{equation}
\label{eq:3bis}
\circ:
\calA(\ttE,\ttF)_{\ssglobe ABfg} \boxx_0 \calA(\ttE,\ttF)_{\ssglobe BChl} 
\to \calA(\ttE,\ttF)_{\ssglobe AB{hf}{lg}},
\end{equation}
\end{subequations}
defined for arbitrary $A,B,C \in \cat_0$ and $f,g,h,l \in \cat_0$ for
which the globes in the above display make sense.

The operation $\circ$ is assumed to fulfill the standard associativity
whenever the iterated composition is defined. We also require, for
each $\ttE \in \Ob(\calA)$, the {\em unit map\/} $i_\ttE \in
\calA(\ttE,\ttE)$ having the standard unitality property with respect
to the composition $\circ$.

\begin{convention}
\label{sec:span-categories-span}
As usual, by a {\em map\/} in an enriched category we understand a map in the
underlying category (recalled below). So $i_\ttE$ is in fact a map in
\[
\funny\big(\II0,\calA(\ttE,\ttE)\big) \cong \prod_{A \in \cat_0}
\duo(e,\calA(\ttE,\ttE)_{\sfG(A)}),\ 
\mbox { (cartesian product in $V$)}
\]
where $\sfG(A)$ is the trivial globe~(\ref{triv-globe}).
Because $\duo$ is $V$-enriched, we still have to descent one more step and
interpret $i_\ttE$ as an element of the set  
\[
{\mathcal U}V\big(I, \prod_{A \in \cat_0}
\duo(e,\calA(\ttE,\ttE)_{\sfG(A)})\big),\ 
\mbox { (cartesian product of sets)}
\]
where  ${\mathcal U}V$ is the underlying category of $V$. This convention
will be used throughout the rest of the paper.
\end{convention}

If we interpret \spn\ objects as diagrams~(\ref{funny-diagram}), a
\spn\ category appears as a `partial' category, in which the
categorial composition $\phi\circ \psi$ of $\psi \in \calA(\ttE,\ttF)$ and $\phi
\in \calA(\ttF,\ttG)$ is defined only if $T(\psi) = S(\phi)$. One then has
\[
s(\phi\circ \psi) = s(\phi) s(\psi) \mbox { and } 
t(\phi\circ \psi) = t(\phi)t(\psi),
\] 
which implies $T(\phi\circ \psi) = T(\phi)$ and 
$S(\phi\circ \psi) = S(\psi)$.  The unit $i_\ttE$
is then represented by a~map $i_\ttE : \cat_0
\to \calA(\ttE,\ttE)$ with $si_\ttE = ti_\ttE = i_\cat$ such that
\[
\phi \circ i_\ttE(S(\phi)) = i_\ttF(T(\phi)) \circ \phi = \phi,
\]
for all $\phi \in \calA(\ttE,\ttF)$ and $\ttE,\ttF \in \Ob(\calA)$.

The {\em underlying category\/}  of a \spn\ (=
$\funny$-enriched) category $\calA$ is defined in the usual manner
as the $V$-enriched category $\calUV$
with the same set of objects, and morphism  $\calUV(\ttE,\ttF) :=
\funny(\II0,\calA(\ttE,\ttF))$. It follows from definition that
\[
\calUV(\ttE,\ttF) = \prod_{A \in \cat_0}
\duo\big(e,\calA(\ttE,\ttF)_{\sfG(A)}\big), \ \mbox {(the product in $V$)}
\] 
where $\sfG(A)$ is the trivial $A$-globe~(\ref{triv-globe}). 
In the diagrammatic interpretation~(\ref{funny-diagram}) 
of \spn\ objects one has
\[
\calUV(\ttE,\ttF) := \{\lambda : \cat_0 \to \calA(\ttE,\ttF);\
s\lambda = t\lambda = i_\cat\}.
\]

So, the underlying category $\calUV$ of a span $W$-category $\calA$ is a
$V$-enriched category. It therefore has its own underlying category 
${\mathcal U}^2 \calA := {\mathcal U}(\calUV)$, which is this time an
ordinary category (no enrichment). 
Objects of a \spn\ category $\calA$ are
{\em isomorphic\/} if and only if they are isomorphic as objects of
${\mathcal U}^2 \calA$.

\begin{example}
If $\cat$ is the initial one-object, one-morphism category,
then \spn\ $\duo$-categories over $\cat$ are ordinary
$(\duo,\boxx_0,e)$-enriched categories.
\end{example}

\subsection{Monoidal  span-categories.}
\label{fmc}

Monoidal categories over a duoidal category were introduced in
Section~\ref{sec:duo-categ-mono}. Here we address the particular case
of the duoidal category $\funny(\cat,\duo)$.
The {\em $1$-product\/} of \spn\ $\duo$-categories $\calA_1$ and
$\calA_2$ over $\cat$ is the \spn\ $V$-category \hbox{$\calA_1\times_1
  \calA_2$} over $\cat$ whose class of objects is the cartesian
product $\Ob(\calA_1)\times \Ob(\calA_2)$.  
The morphisms~are
\begin{subequations}
\begin{equation}
(\calA_1\times_1 \calA_2)(\ttE_1 \times \ttE_2,\ttF_1 \times \ttF) :=
  \calA_1(\ttE_1,\ttF_1) \tmes_1 \calA_2(\ttE_2,\ttF_2).
\end{equation}
Explicitly, 
for a globe $\sfG $
and objects $\ttE_i,\ttF_i \in \calA_i$, $i=1,2$, we have
\begin{equation}
\label{eq:3}
(\calA_1\times_1 \calA_2)(\ttE_1 \times \ttE_2,\ttF_1 \times
\ttF)_\sfG 
:= 
\coprod_{\sfG_1,\sfG_2}
 \calA_1(\ttE_1,\ttF_1)_{\sfG_1} 
\boxx_1  \calA_2(\ttE_2,\ttF_2)_{\sfG_2},
\end{equation}
\end{subequations}
with the coproduct over all globes $\sfG_1,\sfG_2$ as in~(\ref{eq:13}).

Loosely speaking, the set of morphisms 
$(\calA_1\times_1 \calA_2)(\ttE_1 \times
\ttE_2,\ttF_1 \times \ttF_2)$ 
is generated by the products  $\phi_1 \boxx_1 \phi_2 \in
\calA_1(\ttE_1,\ttF_1) 
\boxx_1 \calA_2(\ttE_2,\ttF_2)$ satisfying $t(\phi_1) = s(\phi_2)$, see
Figure~\ref{obr2}. 
\begin{figure}
{% Picture saved by xtexcad 2.4
\unitlength=1.400000pt
\begin{picture}(120.00,40.00)(0.00,0.00)
\thicklines
\put(95.00,10.00){\makebox(0.00,0.00){$\phi_2$}}
\put(25.00,10.00){\makebox(0.00,0.00){$\phi_1$}}
\put(60.00,20.00){\makebox(0.00,0.00){$t(\phi_1)=s(\phi_2)$}}
\put(70.00,0.00){\line(0,1){10.00}}
\put(120.00,0.00){\line(-1,0){50.00}}
\put(120.00,40.00){\line(0,-1){40.00}}
\put(70.00,40.00){\line(1,0){50.00}}
\put(70.00,30.00){\line(0,1){10.00}}
\put(50.00,0.00){\line(0,1){10.00}}
\put(0.00,0.00){\line(1,0){50.00}}
\put(0.00,40.00){\line(0,-1){40.00}}
\put(50.00,40.00){\line(0,-1){10.00}}
\put(0.00,40.00){\line(1,0){50.00}}
\end{picture}}
\caption{\label{obr2}
The source-target conditions for generators $\phi_1 \boxx_1 \phi_2 \in
\calA_1(\ttE_1,\ttF_1) 
\boxx_1 \calA_2(\ttE_2,\ttF_2)$.}
\end{figure}
The categorical composition is defined componentwise in the obvious
manner.  One clearly has, for \spn\ categories $\calA_1$, $\calA_2$
and $\calA_3$, an isomorphism
\[
(\calA_1 \times_1 \calA_2) \times \calA_3
\cong \calA_1 \times_1 (\calA_2 \times \calA_3),
\] 
but, in general, 
$\calA_1 \times_1 \calA_2 \not\cong (\calA_2 \times_1 \calA_1)$. The
category ${\mathbf 1}_v$ with one object $1$ and the \spn\ set of
morphisms ${\mathbf 1}_v(1,1) := v$
is the unit for the multiplication $\times_1$.

\begin{definition}
\label{mono1}
A {\em \spn\ monoidal $\duo$-category\/} is a
$\funny(\cat,\duo)$-monoidal category in the sense of
Definition~\ref{mono}.
\end{definition}

\begin{observation}
\label{sec:spn-mono-categ}
The functor $\eta: {\mathbf 1}_v \to \calK$ in Definition~\ref{mono} is
specified by an object $\tte := \eta(1)$ together with a
$\cat_1$-family of $\duo$-morphisms $v \to \calK(\tte,\tte)_{\sfG(f)}$,
with $\sfG(f)$ as in~(\ref{triv-globe1}). 
\end{observation}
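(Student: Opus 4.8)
The plan is to unwind Definition~\ref{mono}, applied in the \spn\ context of Definition~\ref{mono1} with $\funny = \funny(\cat,\duo)$ playing the role of the enriching duoidal category, and simply to read off what the data of the unit functor $\eta$ amount to. First I would recall that $\eta$ is a $1$-cell of $\Cat(\funny)$, that is, a $(\funny,\tmes_0,\II0)$-enriched functor out of the one-object category ${\mathbf 1}_v$. Since ${\mathbf 1}_v$ has the single object $1$, the object-part of $\eta$ is nothing but a choice $\tte := \eta(1) \in \Ob(\calK)$, and the only remaining datum is the effect of $\eta$ on the single hom-object, namely a morphism
\[
\eta_{1,1} : {\mathbf 1}_v(1,1) = \II1 \longrightarrow \calK(\tte,\tte)
\]
of \spn\ objects, where $\II1$ is the $\tmes_1$-unit of $\funny$.

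Next I would decode $\eta_{1,1}$ via Definition~\ref{funny-object}: a morphism of \spn\ objects is a globe-indexed system $\{(\eta_{1,1})_\sfG \in \duo(\II1_\sfG,\calK(\tte,\tte)_\sfG)\}$. Here I invoke the explicit description of $\II1$: its fibre $\II1_\sfG$ equals $v$ precisely when $\sfG = \sfG(f)$ for some $f \in \cat_1$, and equals the initial object $0$ for every other globe. For a globe not of the form $\sfG(f)$, the component $0 \to \calK(\tte,\tte)_\sfG$ is the unique map out of the initial object and carries no information; for $\sfG = \sfG(f)$ the component is exactly a $\duo$-morphism $v \to \calK(\tte,\tte)_{\sfG(f)}$. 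Assembling these, $\eta_{1,1}$ is equivalent to a $\cat_1$-indexed family $\bigl\{v \to \calK(\tte,\tte)_{\sfG(f)}\bigr\}_{f \in \cat_1}$, which is precisely the asserted data. I would stress that the trivial object-globes are not lost but appear here as the identity cases, since $\sfG(A) = \sfG(\id_A)$ in the notation of~(\ref{triv-globe}) and~(\ref{triv-globe1}).

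Finally, for completeness I would note that the remaining functor axioms translate into conditions on this family rather than into extra data: compatibility with units records the interaction of the family with the unit map $\II0 \to \II1$ (induced by $e \to v$), and compatibility with composition records its interaction with the structure map $\II1 \tmes_0 \II1 \to \II1$ constructed in Section~\ref{sec:spn-categories}. The whole argument is a direct transcription of the definitions; the only point needing a moment's care — and the closest thing to an obstacle — is the bookkeeping of the vanishing components, i.e.~verifying that every globe on which $\II1$ is initial contributes nothing (immediate from the universal property of $0$) and that the identity globes $\sfG(\id_A)$ are correctly counted among the $\sfG(f)$.
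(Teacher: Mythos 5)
Your proposal is correct and is exactly the intended argument: the paper states this as an unproved Observation precisely because it is the definitional unwinding you carry out, namely reading off the object part $\tte=\eta(1)$ and the single hom-component $\II1={\mathbf 1}_v(1,1)\to\calK(\tte,\tte)$, whose fibres are $v\to\calK(\tte,\tte)_{\sfG(f)}$ for $f\in\cat_1$ and forced maps out of $0$ elsewhere. Your closing remarks on the vanishing components, the identification $\sfG(A)=\sfG(\id_A)$, and the functor axioms (unit via $\II0\to\II1$, composition via $\II1\tmes_0\II1\to\II1$) being conditions rather than extra data are all accurate and consistent with the paper's conventions.
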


\subsection{Span operads}

\begin{definition}

A {\em span operad\/} is a $1$-operad, in the sense of
Definition~\ref{1-operad}, in the duoidal category $\funny(\cat,\duo)$. 
\end{definition}

Expanding the above definition, we see that a 
\spn\ operad is an $\funny$-collection 
$X = \{X(n)\}_{n \geq 0}$ such that, for $n \geq 1$, $\Rada k1n \geq 0$, and
globes $\sfG$, $\sfG_i$, $1 \leq i \leq n$, that satisfy 
\[
S(\sfG) = T(\sfG_1) = \cdots = T(\sfG_n) 
\]
and
\[
t(\sfG_1) = s(\sfG_2),\ t(\sfG_2) = s(\sfG_3), \ldots,
 t(\sfG_{n-1}) = s(\sfG_n),
\]
one has a $\duo$-map 
\begin{equation}
\label{eq:19}
\gamma :  \big(X(k_1)_{\sfG_1} \boxx_1 \cdots \boxx_1
X(k_n)_{\sfG_n}\big) \boxx_0 X(n)_\sfG \longrightarrow
X(k_1 + \cdots + k_n)_{\sfG(\Rada \sfG1n)},  
\end{equation}
where $\sfG(\Rada \sfG1n) := 
\glb{S(\sfG)}{T(\sfG_1)}{s(\sfG)s(\sfG_1)}{t(\sfG)t(\sfG_n)}$,
satisfying May's associativity (which, in this case, includes also the
distributivity law in $\duo$) whenever the
iterated compositions are defined.

An operad unit is given
by a map\footnote{In the sense of Convention~\ref{sec:span-categories-span}.}
$j \in \funny\big(e,X(1)\big)$, i.e.~ by maps $j_{\sfG(A)} \in
\duo\big(e, X(1)_{\sfG(A)}\big)$, $A \in \cat_0$, such that
\[
\gamma(x,j_{\sfG(B)},\ldots,j_{\sfG(B)}) = x\ \mbox { and }\
\gamma(j_{\sfG(A)},x_1) = x_1,
\] 
for each $x \in X(n)$, $x_1 \in X(1)$ and $A,B \in \cat_0$ such that
$S(x) = B$ and $T(x_1)=A$.

Informally, a \spn\ operad is a `partial' operad with the composition
$\gamma(x,\Rada xn1)$ defined only if the {\em source and target conditions}
\begin{subequations}
\begin{equation}
\nonumber 
S(x) = T(x_1) = \cdots = T(x_n)
\end{equation}
and
\begin{equation}
\nonumber 
t(x_1) = s(x_2),\ t(x_2) = s(x_3), \ldots,\ t(x_{n-1}) = s(x_n),
\end{equation}
\end{subequations}
are satisfied, see Figure~\ref{obr1}.

\begin{example}
If $\cat$ is the one object, one morphism category,
then a span operad is a $1$-operad in the duoidal category $\duo$ in
the sense of Definition~\ref{1-operad}.

\end{example}

\begin{example}
If $\duo = V=\Set$, we immediately see from the above explicit
description that a span operad is exactly a ${\mathbf f}{\mathbf
c}$-operad of Leinster \cite{Leinster} with discrete graph of colors 
$$
\begin{array}{c}
\cat_0
\\
\kategorie {\id}{\id}
\\
\cat_0
\end{array}
$$
whose $0$-truncation is ${\mathbf I}{\mathbf d}$-operad $\cat.$ 

\end{example}

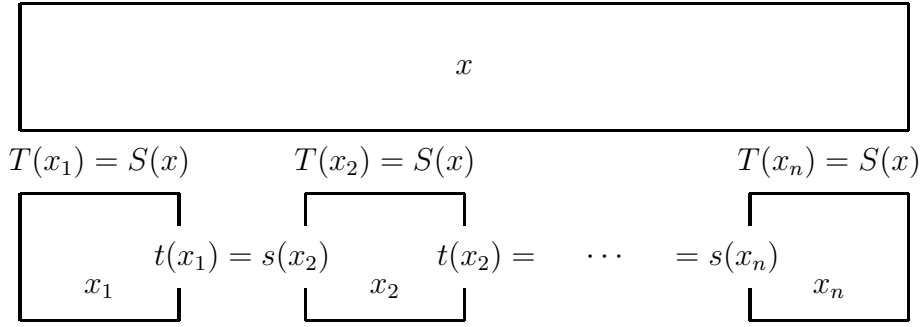
\begin{figure}
{% Picture saved by xtexcad 2.4
\unitlength=1.2pt
\begin{picture}(280.00,100.00)(0.00,0.00)
\thicklines
\put(50.00,10.00){\line(0,-1){10.00}}
\put(90.00,10.00){\line(0,-1){10.00}}
\put(140.00,10.00){\line(0,-1){10.00}}
\put(230.00,10.00){\line(0,-1){10.00}}
\put(230.00,40.00){\line(0,-1){10.00}}
\put(140.00,40.00){\line(0,-1){10.00}}
\put(90.00,40.00){\line(0,-1){10.00}}
\put(50.00,40.00){\line(0,-1){10.00}}
\put(0.00,40.00){\line(0,-1){40.00}}
\put(255.00,10.00){\makebox(0.00,0.00){$x_n$}}
\put(115.00,10.00){\makebox(0.00,0.00){$x_2$}}
\put(25.00,10.00){\makebox(0.00,0.00){$x_1$}}
\put(140.00,80.00){\makebox(0.00,0.00){$x$}}
\put(255.00,50.00){\makebox(0.00,0.00){$T(x_n)=S(x)$}}
\put(115.00,50.00){\makebox(0.00,0.00){$T(x_2)=S(x)$}}
\put(25.00,50.00){\makebox(0.00,0.00){$T(x_1)=S(x)$}}
\put(280.00,60.00){\line(-1,0){280.00}}
\put(280.00,100.00){\line(0,-1){40.00}}
\put(0.00,100.00){\line(1,0){280.00}}
\put(0.00,60.00){\line(0,1){40.00}}
\put(185.00,20.00){\makebox(0.00,0.00){$\cdots$}}
\put(223.00,20.00){\makebox(0.00,0.00){$=s(x_n)$}}
\put(147.00,20.00){\makebox(0.00,0.00){$t(x_2)=$}}
\put(70.00,20.00){\makebox(0.00,0.00){$t(x_1)=s(x_2)$}}
\put(280.00,0.00){\line(-1,0){50.00}}
\put(280.00,40.00){\line(0,-1){40.00}}
\put(230.00,40.00){\line(1,0){50.00}}
\put(140.00,40.00){\line(-1,0){50.00}}
\put(90.00,0.00){\line(1,0){50.00}}
\put(50.00,40.00){\line(-1,0){50.00}}
\put(0.00,0.00){\line(1,0){50.00}}
\end{picture}}
\caption{\label{obr1}
The source-target conditions of elements for
which $\gamma(x,\Rada x1n)$ is defined. The composition is assumed
from the bottom to the top.}
\end{figure}

\begin{example}
\label{fAss}
According to Example~\ref{Ass}, one has a span operad 
$\fAss$ with $\fAss(n) := \II1$
for $n \geq 0$. There is also a Forcey span-operad
$\eAss$ with $\eAss(n) := \II0$. If $\cat$ is the initial one-object one-arrow
category and $\duo = V$, then $\fAss$ and $\eAss$ coincide and
are the non-$\Sigma$ operads for unital associative $V$-algebras.
\end{example}

\begin{example}
Each object $\ttE$ of a \spn\ monoidal category $(\calA,\odot,\tte)$
determines the \spn\ {\em endomorphism operad\/} $\End_\ttE =
\coll{\End_\ttE}$ with $\End_\ttE(n) := \calA(\ttE^{\odot
n},\ttE)$. We put, by definition, $\ttE^{\odot 0} := \tte$ so
$\End_\ttE(0) = \calA(\tte,E)$.  The structure operations are given in
an obvious way. The operadic unit $j : e \to \End_E(1) =
\calA(\ttE,\ttE)$ is the unit map of the category $\calA$.
\end{example}

\section{The monoidal span category $\Jarka(O,\calK) $} 
\label{Jarka}

In this section we describe a construction providing examples of span
categories.  More precisely,  we us fix a monoidal $\duo$-category
$\calK = (\calK, \odot, \eta)$ which admits coproducts and $\odot$
preserves them in each variable. Let us fix also a functor $O:\cat\to
\Set$. We will construct  a monoidal span category
$\Jarka(O,\calK)$ which will play the r\^ole of the basis monoidal category
for the construction of the Tamarkin complex.

Let $\Gl$ be a category of globes in $\cat$, that is, the
category whose set of objects is $\cat_0$ and whose set of arrows is
the set of globes in $\cat$. The composition and identities are
obvious.

Let $F:\Gl\rightarrow \Cat(\duo)$ be a
functor.  The enriched version of the Grothendieck construction
$\int{F}$ is the category whose objects are pairs $(A,X)$ where $A\in
\cat_0$ and $X$ is an object of $F(A)$.  The enriched hom
$\int{F}\big((A,X),(B,Y)\big)$ is defined as
\[
\coprod_{\sfG \in \Gl(A,B)}{F(B)}\big(F(\sfG)(X),Y\big).
\]

There is a projection $\pi_0: (\int F)_0 \rightarrow \cat_0.$ 
One therefore has a span category $\Gamma(F)$ whose objects are 
sections of the map 
$\pi_0: (\int F)_0 \rightarrow \cat_0$.
Given two such sections $\ttE,\ttF$, one puts
\[
\Gamma(F)(\ttE,\ttF)
:= \coprod_{A,B \in \cat_0} {\textstyle\int{F}}\big(\ttE(A),\ttF(B)\big)
\]
with the obvious structure of a span $\duo$-object.
The composition and identities are obvious.

Now we will construct a canonical
spanification functor $\Sp(O,\calK): \Gl \rightarrow \Cat(\duo)$ out of a
functor $O:\cat\rightarrow \Set$ and a monoidal $\duo$-category $\calK$.
The objects of the category
$\Sp(O,\calK)(A)$ are  $\calK$-spans of the form 
\begin{equation}\label{spanE}\posun{1.5}
\0-span(O(A), O(A),X) \hskip 1em ,\ \  A \in \cat_0,
\end{equation}
i.e.~collections $X = \{X(a',a'')\}$ of objects of $\calK$ indexed by
elements $a',a'' \in O(A)$. The $\duo$-enriched homs in $\Sp(O,\calK)(A)$ are given by
\[
\Sp(O,\calK)(A)(X,Y) :=   
\prod_{a',a'' \in O(A)} \calK\big(X(a',a''),Y(a',a'')\big).
\] 
It is easy to see that the monoidal
$\duo$-structure $\odot$ of $\calK$ induces a monoidal
$\duo$-structure $\tens$ on
$\Sp(O,\calK)(A)$ by the formula
\begin{equation}\label{tens}
(X \tens Y)(a',a'') := \coprod_{a \in O(A)} X(a',a)\odot Y(a,a'').
\end{equation}
For a globe as in~(\ref{globe}), the functor $\Sp(O,\calK)(\sfG)$ maps a
span~(\ref{spanE}) to the span
\[
O(B)\stackrel{O(f)}{\longleftarrow} O(A)\stackrel{s}{\leftarrow} X 
\stackrel{t}{\rightarrow} O(A)\stackrel{O(g)}{\longrightarrow} O(B).
\]
So we have a span category $\Gamma(\Sp(O,\calK))$.  This category is the main
ingredient for the construction of the Tamarkin complex, so
we describe it explicitly.

To simplify the notation, we will denote 
$O$ by $\wt{(-)}: \cat \to \Set$ and $\Gamma(\Sp(O,\calK))$ 
will be denoted $\Jarka(O,\calK)$.
Objects of $\Jarka(O,\calK)$ are $\cat_0$-families of $\calK$-enriched spans
\begin{equation}
\label{eq:6}
\posun{1.5}
\hphantom{\  A \in \cat_0,}
\0-span(\wt A, \wt A,\ttE_A) \hskip 1em ,\ \  A \in \cat_0,
\end{equation}
i.e.~families
$\ttE = \{\ttE_A(a',a'')\}$, $a',a''\in \wt A$, $A \in \cat_0$, 
$\ttE(a',a'') \in \calK$. 
The \spn\ $\duo$-sets of morphisms $\Jarka(O,\calK)(\ttE,\ttF)
= \{\Jarka(O,\calK)(\ttE,\ttF)_\sfG\}$ have fibers the products
\begin{equation}
\label{eq:8}
\Jarka(O,\calK)(\ttE,\ttF)_{\ssglobe ABfg} := \prod_{a',a'' \in \wt{A}}
\calK\left(\posun{.4} \ttE_A(a',a''), \ttF_B(\Of(a'), \Og(a'')\right)
\end{equation}
of $\duo$-enriched homs in $\calK$. Less formally,~(\ref{eq:8}) is the set of
dashed arrows in the commutative diagram   
\[
\jaruska{{\wt A}}{{\wt B}}{\ttE_A}{\ttF_B}{\Of}{\Og}
\]

The structure maps~(\ref{eq:3bis}) are then 
compositions of the dashed arrows in 
\[
\jaruskaext{{\wt A}}{{\wt B}}{\ttF_A}{\ttE_B}{\Of}{\Og}
\]
In terms of the fibers, the structure maps are compositions of the
maps in the following display:
\[
\begin{aligned}
\prod_{a',a'' \in {\wt A}}
\calK\left(\posun{.4}\right. \hskip -.3em  \ttE_A(a',a''), 
\ttF_B(\Of(a'), \Og(a''))\left.\posun{.4}\hskip -.3em \right)& \boxx_0
\prod_{b',b'' \in {\wt B}}
\calK\left(\posun{.4}\ttF_B(b',b''), \ttG_C(\Oh(b'), \Ol(b''))\right)
\\
&\downarrow
\\
\prod_{a',a'' \in {\wt A}}  
\calK\left(\posun{.4}\right. \hskip -.3em \ttE_A(a',a''), \ttF_B(\Of(a'), 
\Og(a''))\left.\posun{.4}\hskip -.3em \right) & \boxx_0 
\calK \left(\posun{.4} \right. \hskip -.3em
\ttF_B(\Of(a'),\Og(a'')), \ttG_C(\Ohf(a')), \Olg(a'')) 
\hskip -.3em  \left.\posun{.4}\right)
\\
&\downarrow
\\
\prod_{a',a'' \in {\wt A}}
\calK\left(\posun{.4}\ttE_A(a',a''),\right.\! &
\ttG_C(\Ohf(a'),\Olg(a'')) \hskip -.3em \left. \posun{.4}\right).
\end{aligned}
\]
The upper map above is the canonical one and the lower map is the
categorial composition in $\calK$.  The unit map $i_\ttE$ in
$\funny(e,\Jarka(O,\calK)(\ttE,\ttE)$ is the the product in
\[
\prod_{A \in \cat_0}
\duo\big(\posun{.4} e,\Jarka(O,\calK)(\ttE,\ttE)_{\sfG(A)}\big)
=
\prod_{A \in \cat_0} \ \prod_{a',a'' \in {\wt A}} 
\duo\big(\posun{.4}e,\calK(\ttE_A(a',a''),\ttE_A(a',a''))\big) 
\]
of the enriched units $i_{\ttE(a',a'')} \in
\duo\big(\posun{.4}e,\calK(\ttE_A(a',a''),\ttE_A(a',a''))\big)$ of the
category~$\calK$.

\begin{example}
\label{Jaruska} 
The following particular case will be 
relevant to our interpretation of the Tamarkin construction 
addressed in Section~\ref{sec:tamark-compl-funct-1}.  
Let $\cat$ be  the category
of small
dg-categories, $\calK = \duo = V = \Chain$ and $O : \cat \to \Set$ be the object
functor. The objects of the corresponding category $\Jarka(O,\calK)$ will
then be collections of chain complexes $\ttE = \{\ttE_A(a',a'')\}$,
indexed by objects $a',a'' \in A$ of dg-categories $A \in \cat_0$.
\end{example}

We will often drop the indices $A,B,C,... \in \cat_0$ and write simply
$\{\ttE(a',a'')\}$ instead of $\{\ttE_A(a',a'')\}$,~\&c.

Let us prove that the category $\Jarka(O,\calK)$ constructed above has
a natural monoidal structure.  The functor $\tens : \Jarka(O,\calK)
\times_1 \Jarka(O,\calK) \to \Jarka(O,\calK)$ assigns to objects
$\ttE_1 = \{\ttE_1(a',a'')\}$ and $\ttE_2 = \{\ttE_2(a',a'')\}$ of
$\Jarka(O,\calK)$ the object $\ttE_1 \tens \ttE_2 = \{(\ttE_1 \tens
\ttE_2)(a',a'')\} \in \Jarka(O,\calK)$ where in the right hand side we
use the
`local' product defined by (\ref{tens}).
Informally, $\ttE_1 \tens \ttE_2$ is the $\cat_0$-family of the pull-backs
\[
\posun{1.5}
\unitlength 1.5mm
\linethickness{0.4pt}
\begin{picture}(19,13)(10,11)
\thicklines
\put(11,13){\vector(-1,-1){3.2}}
\put(15,13){\vector(1,-1){3.2}}
\put(7.7,8){\makebox(0,0)[r]{${\wt A}$}}
\put(21.5,8){\makebox(0,0)[c]{${\wt A}$}}
\put(13,14){\makebox(0,0)[b]{$\ttE_1$}}
\put(9,12.5){\makebox(0,0)[r]{\scriptsize $s$}}
\put(18,12.5){\makebox(0,0)[l]{\scriptsize $t$}}
\put(17,0){
\put(11,13){\vector(-1,-1){3.2}}
\put(15,13){\vector(1,-1){3.2}}
\put(18.3,8){\makebox(0,0)[l]{${\wt A}$}}
\put(13,14){\makebox(0,0)[b]{\hphantom{.}\hskip 1em $\ttE_2$ \hskip 1em .}}
\put(9,12.5){\makebox(0,0)[r]{\scriptsize $s$}}
\put(18,12.5){\makebox(0,0)[l]{\scriptsize $t$}}
}
\put(8.5,7){
\put(9,13){\vector(-1,-1){3.2}}
\put(17,13){\vector(1,-1){3.2}}
\put(13,14){\makebox(0,0)[b]{$\ttE_1 \tens \ttE_2$}}
}
\end{picture}
\]

Before we explain how the functor $\tens$ acts on morphisms, we need to
expand some definitions.
For objects $\ttE_1, \ttE_2, \ttF_1, \ttF_2 \in \Jarka(O,\calK)$ and a globe
$\sfG = \glb ABfg \in \Gl$, one sees that the fiber 
$(\Jarka(O,\calK) \times_1 \Jarka(O,\calK))(\ttE_1 \times \ttE_2, \ttF_1
\times\ttF_2)_\sfG$ of the mapping space in $\Jarka(O,\calK) \times_1 \Jarka(O,\calK)$ equals
\[
\coprod_{l \in \cat(A,B)}
\left(
\prod_{a'_1,a''_1 \in {\wt A}} \hskip -.2em
\calK\ll\ttE_1(a'_1,a''_1),\ttF_1(\Of(a'_1),\Ol(a''_1))\rr
\boxx_1
\prod_{a'_2,a''_2 \in {\wt A}}  \hskip -.2em
\calK\ll\ttE_2(a'_2,a''_2),\ttF_2(\Ol(a'_2),\Og(a''_2))\rr
\right),
\]
since all globes $\sfG_1$, $\sfG_2$ such that $ t(\sfG_1) =
s(\sfG_2)$ as in~(\ref{eq:3}) have the form 
\[
\sfG_1 = \glb ABfl,\ \sfG_2 = \glb ABlg,
\] 
for some $l : A \to B \in \cat_1$. On the other
hand, the fiber $\Jarka(O,\calK)(\ttE_1 \tens \ttE_2, \ttF_1
\tens\ttF_2)_\sfG$ of the hom space in $\Jarka(O,\calK)$ equals
\[
\prod_{a',a'' \in {\wt A}}
\calK\ll\coprod_{a \in {\wt A}} \ttE_1(a',a) \odot \ttE_2(a,a''),
\coprod_{a \in {\wt A}}    \ttF_1(\Of(a'),a) \odot \ttF_2(a,\Og(a''))\rr.
\]

To define the functor $\tens$ on morphisms, one needs to specify, for
objects $\ttE_1, \ttE_2, \ttF_1, \ttF_2 \in \Jarka(O,\calK)$ and a globe $\sfG$
as above, a $\duo$-morphism
\[
\tens:
(\Jarka(O,\calK) \times_1 \Jarka(O,\calK))(\ttF_1 \times \ttF_2,\ttE_1 \times \ttE_2)_\sfG \to 
\Jarka(O,\calK)(\ttF_1 \tens \ttF_2,\ttE_1 \tens \ttE_2)_\sfG.
\]
One defines this $\duo$-morphism as the composition of the canonical maps
\begin{align*}
\coprod_{l \in \cat(A,B)}
\left(
\prod_{a'_1,a''_1 \in {\wt A}} \hskip -.2em
\calK\ll\ttE_1(a'_1,a''_1),\ttF_1(\Of(a'_1),\Ol(a''_1))\rr
\right.\boxx_1&\left.
\prod_{a'_2,a''_2 \in {\wt A}}  \hskip -.2em
\calK\ll\ttE_2(a'_2,a''_2),\ttF_2(\Ol(a'_2),\Og(a''_2))\rr
\right)
\\
\downarrow&
\\
\coprod_{l \in \cat(A,B)} \
\prod_{a'_1,a''_1,a'_2,a''_2 \in {\wt A}} \hskip -.5em  \left(
\calK\ll\ttE_1(a'_1,a''_1),\ttF_1(\Of(a'_1),\Ol(\right.\hskip -.1em&a''_1))\rr
\boxx_1\left.
\calK\ll\ttE_2(a'_2,a''_2),\ttF_2(\Ol(a'_2),\Og(a''_2))\rr
\right)
\\
\downarrow&
\\
\coprod_{l \in \cat(A,B)} \
\prod_{a'_1,a''_1,a'_2,a''_2 \in {\wt A}} \hskip -0.2em 
\calK\ll\ttE_1(a'_1,a''_1) \odot \ttE_2(a'_2,a''_2),  & \
\ttF_1(\Of(a'_1),\Ol(a''_1)) \odot \ttF_2(\Ol(a'_2),\Og(a''_2))\rr
\\
\downarrow&
\\
\coprod_{l \in \cat(A,B)} \
\prod_{a'_1,a,a''_2 \in {\wt A}} \hskip -.1em 
\calK\ll\ttE_1(a'_1,a) \odot \ttE_2(a,a''_2),  & \
\ttF_1(\Of(a'_1),\Ol(a)) \odot \ttF_2(\Ol(a),\Og(a''_2))\rr
\\
\downarrow&
\\
\prod_{a'_1,a,a''_2 \in {\wt A}} \hskip -.1em 
\calK\ll\ttE_1(a'_1,a) \odot \ttE_2(a,a''_2),  & \
\coprod_{a' \in {\wt A}}    \ttF_1(\Of(a'_1),a') \odot
\ttF_2(a',\Og(a''_2))\rr
\\
\downarrow&
\\
\prod_{a',a'' \in {\wt A}}
\calK\ll\coprod_{a \in {\wt A}} \ttE_1(a',a) \odot \ttE_2(a,a''),  & \
\coprod_{a \in {\wt A}}    \ttF_1(\Of(a'),a) \odot \ttF_2(a,\Og(a''))\rr.
\end{align*}

Observe the necessity of the source-target condition $t(\sfG_1) = l =
s(\sfG_2)$ for the existence of the above composed map.  

The first piece of data specifying the unit functor $\eta: {\mathbf 1} \to
\Jarka(O,\calK)$ as in Observation~\ref{sec:spn-mono-categ} is the object
$\tte = \{\tte(a',a'')\} \in \Jarka(O,\calK)$ defined by
\begin{equation}
\label{eq:4}
\tte(a',a'') := \cases{\eta}{if $a'=a''$, and}0{otherwise.}
\end{equation}
In the diagrammatic language, $\tte$ is the span
\[
\posun{1.4}
\spanext({\wt A},{\wt A},{\wt A},\id,\id).
\]
To define $v \to \Jarka(O,\calK)(\tte,\tte)_{\sfG(f)}$, notice that, for $f
\in \cat_1$,
\[
\Jarka(O,\calK)(\tte,\tte)_{\sfG(f)} = \prod_{a',a'' \in {\wt A}}
\calK(\tte(a',a''), \tte(\Of(a'),\Of(a'')) \cong \prod_{a \in {\wt A}} 
\calK(\eta,\eta).
\] 
With this identification, the $\duo$-morphism $v \to
\Jarka(O,\calK)(\tte,\tte)_{\sfG(f)}$ is the product of the
$\duo$-morphisms $v \to \calK(\eta,\eta)$ of~(\ref{vaction}).

The underlying category $\UJarka(O,\calK)$ has the same objects as
$\Jarka(O,\calK)$, i.e.~families of $\calK$-spans $\ttE =
\{\ttE_A\}$, $A \in \cat_0$, as in~(\ref{eq:6}). We leave as an
exercise to prove that $\UJarka(O,\calK)(\ttE,\ttF)$ consists
$\cat_0$-families $\{\varphi_A : \ttE_A \to \ttF_A\}$ of morphisms of
$\calK$-enriched spans, with the componentwise composition.  By
Proposition \ref{under}, $\UJarka(O,\calK)$, is a monoidal $\duo$-category.

\section{Factorization of functors, and monoids in $\Jarka(O,\calK)$}

In this section we analyze a correspondence between monoids in the
\spn\ monoidal category $\Jarka(O,\calK)$ introduced and further
studied in Section~\ref{Jarka}, and factorizations of the defining
functor $O= \wt{\hskip .5em} : \cat \to \Set$. 
Let $\Gr(\calK)_0$ be the set
of $\calK$-enriched graphs, i.e.~objects
\[
\posun{1.4}
\0-span(S,S,A)
\]
in which $S$ is a set and $A$ a collection of objects of $\calK$ indexed
by $S \times S$. Suppose that the object map $\wt{(-)}_0 : \cat_0 \to
\Set_0$  of the functor $\wt{(-)} : \cat \to \Set$ factorizes as
\begin{subequations}
\begin{equation}
\label{fact1}
\posun{3}
\unitlength 5mm
\linethickness{0.4pt}
\begin{picture}(14,3.2)(2,4)
\put(7,2){\makebox(0,0)[cc]{$\cat_0$}}
\put(13,2){\makebox(0,0)[cc]{$\Set_0$}}
\put(13,6.2){\makebox(0,0)[bc]{$\Gr(\calK)_0$}}
\thicklines
\put(13,6){\vector(0,-1){3}}
\put(7.75,2){\vector(1,0){4}}
\put(10,2.3){\makebox(0,0)[b]{\scriptsize $\wt{(-)}_0$}}
\put(13.3,4.5){\makebox(0,0)[l]{\scriptsize ${\it vrt}_0$}}
\put(8,3){\vector(4,3){4}}
\put(9.5,4.5){\makebox(0,0)[rb]{\scriptsize $L_0$}}
\end{picture} 
\end{equation}
where ${\it vrt\/}_0$ assigns to each $\calK$-graph its set of vertices.
This factorization determines a {\em distinguished object\/} of $\Jarka(O,\calK)$,
namely the $\cat_0$-family $\ttM = \{\ttM_A\}$, 
with $\ttM_A := L_0(A)$, for $A \in \cat_0$.

Suppose that there is a map $F_0: \cat_0 \to \Cat(\calK)_0$ assigning to
each object $A\in \cat_0$ a small $\calK$-category $F_0A$ such that
$\wt{(-)}_0 : \cat_0 \to \Set_0$ further factorizes as
\begin{equation}
\label{fact2}
\posun{3}
\unitlength 5mm
\linethickness{0.4pt}
\begin{picture}(14,3.5)(2,4)
\put(7,2){\makebox(0,0)[cc]{$\cat_0$}}
\put(13,2){\makebox(0,0)[cc]{$\Set_0$}}
\put(7,6.5){\makebox(0,0)[cc]{$\Cat(\calK)_0$}}
\put(13,6.5){\makebox(0,0)[cc]{$\Gr(\calK)_0$}}
\thicklines
\put(7,2.75){\vector(0,1){3}}
\put(8.5,6.5){\vector(1,0){2.25}}
\put(13,5.75){\vector(0,-1){3}}
\put(7.75,2){\vector(1,0){4}}
\put(10,2.3){\makebox(0,0)[b]{\scriptsize $\wt{(-)}_0$}}
\put(13.3,4.25){\makebox(0,0)[l]{\scriptsize${\it vrt}_0$}}
\put(9.75,6.8){\makebox(0,0)[b]{\scriptsize${\it gr}_0$}}
\put(6.75,4.25){\makebox(0,0)[cr]{\scriptsize $F_0$}}
\end{picture} 
\end{equation}
where ${\it gr}_0$ is the underlying graph map.

\begin{proposition}
\label{sec:factorizations}
Suppose that the map $\wt{(-)}_0 : \cat_0 \to \Set_0$ factorizes as
in~(\ref{fact2}). Then the distinguished object $\ttM \in \Jarka(O,\calK)$
constructed above is a monoid in the underling category~$\UJarka(O,\calK)$.
\end{proposition}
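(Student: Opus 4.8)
The plan is to read the monoid structure on $\ttM$ directly off the enriched-category structure carried by the $F_0 A$, via the standard principle that a monoid in a category of endo-spans on a fixed set is the same as a category enriched in $\calK$ with that set of objects. By hypothesis~(\ref{fact2}) the object map $\wt{(-)}_0$ factors through $F_0 : \cat_0 \to \Cat(\calK)_0$, so each $\ttM_A = L_0(A)$ is the underlying $\calK$-graph ${\it gr}_0(F_0 A)$ of a small $\calK$-category $F_0 A$ with object set $\wt A$ and hom-objects $\ttM_A(a',a'') \in \calK$. This puts at our disposal the composition morphisms $c_A(a',a,a'') : \ttM_A(a',a) \odot \ttM_A(a,a'') \to \ttM_A(a',a'')$ and the identity morphisms $j_A(a) : \eta \to \ttM_A(a,a)$ of $F_0 A$, for all $a',a,a'' \in \wt A$ and all $A \in \cat_0$.

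First I would define the multiplication $m : \ttM \tens \ttM \to \ttM$. Since $(\ttM \tens \ttM)_A(a',a'') = \coprod_{a \in \wt A} \ttM_A(a',a) \odot \ttM_A(a,a'')$ by~(\ref{tens}), the universal property of the coproduct lets me take the component of $m$ over $A$ and $(a',a'')$ to be the unique $\calK$-morphism induced by the family $\{c_A(a',a,a'')\}_{a \in \wt A}$. Likewise I would define the unit $i : \tte \to \ttM$ componentwise: since $\tte_A(a',a'')$ equals $\eta$ for $a'=a''$ and the initial object $0$ otherwise by~(\ref{eq:4}), the diagonal components are the maps $j_A(a) : \eta \to \ttM_A(a,a)$ and the off-diagonal components are the unique maps out of $0$. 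By construction both $m$ and $i$ are $\cat_0$-families of morphisms of $\calK$-spans, hence genuine morphisms of $\UJarka(O,\calK)$ in the sense recalled at the end of Section~\ref{Jarka}.

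Next I would check the associativity and unit axioms. The key point is that everything is componentwise in $A \in \cat_0$: the product $\tens$ on $\Jarka(O,\calK)$, and hence on $\UJarka(O,\calK)$, is the $\cat_0$-indexed family of the local products~(\ref{tens}) on the span categories $\Sp(O,\calK)(A)$, and $\tte$ is the corresponding family of local units. It therefore suffices to verify the monoid axioms one $A$ at a time, i.e.\ that $(\ttM_A, m_A, i_A)$ is a monoid in $\Sp(O,\calK)(A)$. For a fixed $A$ this is exactly the classical identification of monoids in the monoidal category of endo-spans on $\wt A$ with $\calK$-categories having object set $\wt A$: unfolding $m_A$ on the triple product $(\ttM_A \tens \ttM_A \tens \ttM_A)(a',a'') = \coprod_{a,b} \ttM_A(a',a) \odot \ttM_A(a,b) \odot \ttM_A(b,a'')$ — here using that $\odot$ preserves coproducts in each variable — turns the associativity square for $m_A$ into the associativity of $c_A$, and the two unit triangles for $i_A$ into the left and right unit laws for $j_A$, all of which hold because $F_0 A$ is a $\calK$-category.

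The main obstacle will be the coherence bookkeeping. The product $\tens$ is only associative and unital up to the structural isomorphisms of the pseudomonoid $\Jarka(O,\calK)$, which are themselves assembled from the associator and unitors of $\odot$ on $\calK$ together with the canonical coproduct-reindexing isomorphisms. One must check that, under the coproduct decomposition of the iterated products, these coherence cells match precisely the associativity and unit coherences appearing in the $\calK$-category axioms of $F_0 A$; it is enough to record that each structural isomorphism of $\tens$ restricts, on the relevant summand, to the corresponding isomorphism of $\odot$, after which the monoid axioms for $\ttM$ follow summand-by-summand from those for $F_0 A$. I would note finally that only the composition $c_A$ and the identities $j_A$ of $F_0 A$ are used, and not the extra unitary datum $u$ of a $\calK$-category, which is consistent with $\ttM$ being asserted to be a monoid only in the \emph{underlying} category $\UJarka(O,\calK)$.
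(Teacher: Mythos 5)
Your proposal is correct and follows essentially the same route as the paper: both read the multiplication $\mubar$ and unit $\nubar$ of $\ttM$ off the composition and identity morphisms of the $\calK$-categories $F_0A$, identifying maps out of $(\ttM\tens\ttM)_A(a',a'')=\coprod_{a}\ttM_A(a',a)\odot\ttM_A(a,a'')$ with families indexed by $a',a,a''\in\wt A$, and verify the monoid axioms componentwise in $A$. The paper compresses the verification into ``one easily verifies,'' whereas you spell out the summand-by-summand reduction to the $\calK$-category axioms of $F_0A$ and the coherence bookkeeping; this is elaboration, not a different argument.
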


\begin{proof}
A monoid structure on $\ttM$ is given by $\funny$-maps $\mubar : e \to
\Jarka(O,\calK)(\ttM\tens \ttM,\ttM)$ and $\nubar : e \to
\Jarka(O,\calK)(\tte,\ttM)$. It is an exercise on definitions that
these maps are given by specifying, for each $A \in \cat_0$, elements
\[
\mubar_{\sfG(A)} \in  \hskip -.3em \prod_{a',a,a'' \in \wt A} \hskip -.3em
\calK\ll F_0A(a',a) \odot F_0A(a,a''),F_0A(a',a'')\rr \ \mbox { and }\
\nubar_{\sfG(A)} \in \prod_{a \in \wt A}
\calK\ll \eta,F_0A(a,a)\rr,
\] 
where $\sfG(A)$ is as in~(\ref{triv-globe}).  Since $F_0A$ is a
$\calK$-category with the set of objects $\wt A$, one can take as
$\mubar_{\sfG(A)}$ the element determined by the $\calK$-category
composition of $F_0A$ and as $\nubar_{\sfG(A)}$ the element determined
by the $\calK$-category identities of $F_0A$. One easily verifies that
this choice gives a monoid in $\UJarka(O,\calK)$.
\end{proof}

Assume that the factorization~(\ref{fact2}) is induced by a
factorization
\begin{equation}
\label{fact3}
\posun{3}
\unitlength 5mm
\linethickness{0.4pt}
\begin{picture}(14,3.5)(2,4)
\put(7,2){\makebox(0,0)[cc]{$\cat$}}
\put(13,2){\makebox(0,0)[cc]{$\Set$}}
\put(7,6.5){\makebox(0,0)[cc]{$\Cat(\calK)$}}
\put(13,6.5){\makebox(0,0)[cc]{$\Gr(\calK)$}}
\thicklines
\put(7,2.75){\vector(0,1){3}}
\put(8.5,6.5){\vector(1,0){2.25}}
\put(13,5.75){\vector(0,-1){3}}
\put(7.75,2){\vector(1,0){4}}
\put(10,2.3){\makebox(0,0)[b]{\scriptsize $\wt{\hskip .5em}$}}
\put(13.3,4.25){\makebox(0,0)[l]{\scriptsize${\it vrt}$}}
\put(10,6.8){\makebox(0,0)[b]{\scriptsize${\it gr}$}}
\put(6.75,4.25){\makebox(0,0)[cr]{\scriptsize $F$}}
\end{picture} 
\end{equation}
\end{subequations}
of the {\em functor\/} $\wt {\hskip .5em} : \cat \to \Set$ via {\em functors\/}.
One then has

\begin{proposition}
\label{Jarca1}
Suppose that the functor $\wt{\hskip .5em} : \cat \to \Set$ factorizes
via functors as in~(\ref{fact3}). Then the object $\ttM \in \Jarka(O,\calK)$ is
a monoid, in the sense of Definition~\ref{na-plaz}, in the
\spn-monoidal category~$\Jarka(O,\calK)$.
\end{proposition}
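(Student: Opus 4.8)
The plan is to leverage Proposition~\ref{sec:factorizations}. Since the functorial factorization~(\ref{fact3}) induces the object-level factorization~(\ref{fact2}), that proposition already equips the distinguished object $\ttM$ with maps $\nubar$ and $\mubar$ that make it a monoid in the underlying category $\UJarka(O,\calK)$; this provides data~(i) and~(ii) of Definition~\ref{na-plaz} and establishes axiom~$(\star)$, the associativity and unitality being those of the $\calK$-category composition of each $F_0 A$. What remains is to produce the third datum, the unit $u : \II1 \to \Jarka(O,\calK)(\ttM,\ttM)$, and to verify axioms $(\star\star)$ and $(\star\!\star\!\star)$. The input beyond Proposition~\ref{sec:factorizations} is precisely the action of the \emph{functor} $F$ on the morphisms of $\cat$, which the object map alone did not see.

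First I would construct $u$. Since $\II1$ is supported on the globes $\sfG(f)$, a $\funny$-morphism $\II1 \to \Jarka(O,\calK)(\ttM,\ttM)$ is, in the manner of Observation~\ref{sec:spn-mono-categ}, a $\cat_1$-family of $\duo$-maps $u_{\sfG(f)} : v \to \Jarka(O,\calK)(\ttM,\ttM)_{\sfG(f)}$, one for each $f : A \to B$. For such an $f$ the functor $F$ of~(\ref{fact3}) supplies a $\calK$-functor $F(f) : F_0 A \to F_0 B$ lying over the object map $\wt f = O(f)$; its structure morphism~(\ref{etov}), taken in the $v$-form~(\ref{vtov}) of Remark~\ref{replace}, is a $\duo$-map $v \to \calK\big(\ttM_A(a',a''),\ttM_B(\wt f(a'),\wt f(a''))\big)$ for each $a',a'' \in \wt A$. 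Their product over $a',a''$ is the component $u_{\sfG(f)}$, and these assemble into the required $u$.

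Next I would check $(\star\star)$, that $u$ is a monoid morphism. The $\tmes_0$-multiplication on $\Jarka(O,\calK)(\ttM,\ttM)$ is the categorical composition $\circ$, while the $\tmes_0$-monoid structure of $\II1$ is induced by the map~(iv) of Definition~\ref{sec:spn-mono-categ-1}; under the identification above, preservation of multiplication is the equality $u_{\sfG(f_2)}\circ u_{\sfG(f_1)} = u_{\sfG(f_2 f_1)}$ and preservation of the unit is $u_{\sfG(\id_A)}\,(e \to v) = i_\ttM$, and these are exactly the functoriality relations $F(f_2)F(f_1) = F(f_2 f_1)$ and $F(\id_A) = \id$ read off in the $v$-form of Remark~\ref{replace}. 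Finally, axiom~$(\star\!\star\!\star)$ is the compatibility of the transport $u$ with the multiplication $\mubar$: since $\mubar_{\sfG(A)}$ is the $\calK$-composition of $F_0 A$ and $u_{\sfG(f)}$ encodes $F(f)$, the hexagon, evaluated at the globes $\sfG(f)$, is precisely the assertion that each $F(f)$ preserves $\calK$-composition, i.e.\ the remaining $\calK$-functor axiom for $F(f)$.

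The main obstacle will be matching the abstract hexagon~$(\star\!\star\!\star)$, which is formulated with $\funny = \funny(\cat,\duo)$ in the role of the base duoidal category, against the $\calK$-functoriality of the maps $F(f)$. This forces one to unwind the fiberwise definitions of $\tmes_0$, $\tmes_1$ and of the interchange law of $\funny(\cat,\duo)$ over the relevant globes, keeping careful track of the coproducts and of the source--target conditions that select which summands contribute. Once the indexing is organised, every face of the diagram collapses to an instance of the composition- and unit-preservation axioms of the $\calK$-functors $F(f)$ and of the $\calK$-category $F_0 A$, so the verification, while bookkeeping-heavy, is routine; I would relegate it, as the authors do for the companion coherence checks, to a direct inspection.
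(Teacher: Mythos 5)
Your proposal matches the paper's proof essentially step for step: the authors likewise invoke Proposition~\ref{sec:factorizations} (via the implication from factorization~(\ref{fact3}) to~(\ref{fact2})) for the underlying monoid structure, and then define the unit by taking, for each $A \stackrel f\to B \in \cat_1$, the element $u_{\sfG(f)}$ given by the product over $a',a'' \in \wt A$ of the $\duo$-maps $F(f)_v$ of~(\ref{vtov}). Your explicit unwinding of axioms $(\star\star)$ and $(\star\!\star\!\star)$ as the functoriality of $F$ and the $\calK$-functor axioms for $F(f)$ merely spells out verifications the paper leaves implicit, and it is correct.
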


\begin{proof}
Since factorization~(\ref{fact3}) implies factorization~(\ref{fact2})
$\ttM$ has, by Proposition~\ref{sec:factorizations}, an induced
structure of a monoid in $\UJarka(O,\calK)$. By
Proposition~\ref{sec:spn-mono-categ-2}, it remains to specify an
$\funny$-map $u : v \to \Jarka(O,\calK)(\ttM,\ttM)$. Such a map is determined
by a choice, for each $A \stackrel f\to B \in \cat_1$, of an element
\[
u_{\sfG(f)} \in \prod_{a',a'' \in \wt A} 
\duo \big(v, \calK \ll FA(a',a''),FB(f(a'),f(a''))\rr\big),
\]
where $\sfG(f)$ is as in~(\ref{triv-globe1}).
We take as $u_{\sfG(f)}$ the product of the $\duo$-maps  $F(f)_v$ 
of~(\ref{vtov}) determining the functor $F(f): FA \to FB$.
\end{proof}

The correspondences described above can be organized into the scheme:
\[
\def\arraystretch{1.4}
\begin{array}{rcl}
\mbox{functor $\wt{\hskip .5em}:\cat \to \Set$}&\longmapsto&
\mbox {category $\Jarka(O,\calK)$} 
\\
\mbox {factorization~(\ref{fact1})}&\longmapsto&
\mbox {object of $\Jarka(O,\calK)$}
\\
\mbox {factorization~(\ref{fact2})}&\longmapsto&
\mbox {monoid in the underlying category $\UJarka(O,\calK)$}
\\
\mbox {factorization~(\ref{fact3})}&\longmapsto&
\mbox {monoid in $\Jarka(O,\calK)$}
\end{array}
\]

The table above can be `categorified' as follows. Let us denote by
$\Fact_1$ the category whose objects are factorizations $L_0$ as
in~(\ref{fact1}) and whose morphisms $L'_0 \to L''_0$ are
$\cat_0$-families $\{\alpha_A : L'_0(A) \to L''_0(A)\}$ of graph
morphisms. Likewise, let $\Fact_2$ be the category whose objects are
factorizations $F_0$ as in~(\ref{fact2}) and morphisms $F'_0 \to
F''_0$ are $\cat_0$-families $\{\beta_A : F'_0(A) \to F''_0(A)\}$ of
$\calK$-functors. Finally, let $\Fact_3$ be the category of functors
$F$ as in~(\ref{fact3}), and their natural transformations.

\begin{proposition}
\label{sec:fact-funct-mono}
One has the following natural isomorphisms of categories:
\begin{align*}
\Fact_1 &\cong \mbox{ the underlying category } \UJarka(O,\calK) \mbox { of
} \Jarka(O,\calK),
\\
\Fact_2 &\cong \mbox {the category of monoids in }  \UJarka(O,\calK),
\\
\Fact_3 &\cong \mbox {the category of monoids in }  \Jarka(O,\calK).
\end{align*}
\end{proposition}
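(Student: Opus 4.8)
The plan is to prove all three isomorphisms together, as a single ``categorification'' of the object-level dictionary already recorded in the table preceding the statement: each of Propositions~\ref{sec:factorizations} and~\ref{Jarca1} supplies one direction of the assignment on objects, and the task is to promote these to functors, produce the inverse assignments, and check mutual inverseness on morphisms as well. In every case the underlying mechanism is the same elementary identification. A $\calK$-enriched span with both legs valued in the set $\wt A$ is literally the same datum as a $\calK$-enriched graph with vertex set $\wt A$; equipping such a graph with a composition and a unit is the same as making it a $\calK$-category with object set $\wt A$; and making the resulting assignment $A \mapsto FA$ functorial in $\cat$ is the same as adjoining the extra unit datum of Definition~\ref{na-plaz}(iii). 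So I would first fix these three identifications and then read each claimed isomorphism off them, composition being componentwise (hence strictly preserved) throughout.

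For the first isomorphism I would simply unwind definitions. An object of $\Fact_1$ is a factorization $L_0$, i.e.\ a $\cat_0$-family of $\calK$-graphs $L_0(A)$ with vertex set $\wt A$; by~(\ref{eq:6}) this is exactly an object $\ttM = \{\ttM_A\}$ of $\UJarka(O,\calK)$ with $\ttM_A = L_0(A)$. A morphism of $\Fact_1$ is a $\cat_0$-family of graph morphisms over the fixed vertex sets $\wt A$, which is precisely a $\cat_0$-family of morphisms of $\calK$-spans, i.e.\ a morphism of $\UJarka(O,\calK)$ as described after~(\ref{eq:8}). Both the object map and the morphism map are bijections compatible with the componentwise composition, so this is an isomorphism of categories on the nose, with no coherence to verify.

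For the second isomorphism I would use Proposition~\ref{sec:factorizations} for one direction: a factorization $F_0$ as in~(\ref{fact2}) equips the distinguished object $\ttM = \{L_0 A\}$ with the monoid structure $(\mubar,\nubar)$ coming from the $\calK$-category compositions and units of the $F_0 A$. Conversely, by the componentwise description of $\tens$ and of the hom-spans, a monoid structure on $\ttM$ in $\UJarka(O,\calK)$ is exactly a choice of composition $F_0 A(a',a)\odot F_0 A(a,a'')\to F_0 A(a',a'')$ and unit $\eta\to F_0 A(a,a)$ on each graph $L_0 A$ obeying the $\calK$-category axioms, i.e.\ a factorization $F_0$. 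These assignments are visibly inverse on objects. On morphisms, a monoid morphism in $\UJarka(O,\calK)$ is an identity-on-vertices family $\{\beta_A\}$ compatible with $\mubar$ and $\nubar$, which is precisely an identity-on-objects $\calK$-functor, hence a morphism of $\Fact_2$; again composition matches componentwise, yielding the isomorphism.

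The third isomorphism is where the real content lies, and I expect the bookkeeping of the \emph{unusual} unit datum to be the main obstacle. From a functor $F$ as in~(\ref{fact3}), Proposition~\ref{Jarca1} already produces a monoid in $\Jarka(O,\calK)$ whose extra unit $u : v \to \Jarka(O,\calK)(\ttM,\ttM)$ is assembled from the maps $F(f)_v$ of~(\ref{vtov}), one for each $f : A \to B$ in $\cat_1$. Conversely, given a monoid in $\Jarka(O,\calK)$, the datum $u$ unpacks (as in the proof of Proposition~\ref{Jarca1}) into a $\cat_1$-family of $\duo$-maps $v \to \calK\big(FA(a',a''),FB(f(a'),f(a''))\big)$, i.e.\ into an assignment $f \mapsto F(f)$ on morphisms; axiom~($\star\star$) of Definition~\ref{na-plaz}, that $u$ is a monoid morphism, then reads as functoriality $F(\id_A)=\id_{FA}$ and $F(g)\circ F(f)=F(gf)$ (the $\boxx_0$-composition along the globes $\sfG(f),\sfG(g),\sfG(gf)$), while axiom~($\star\!\star\!\star$), which became diagram~(\ref{eq:11}) in the proof of Proposition~\ref{sec:spn-mono-categ-2}, is exactly the statement that each $F(f)$ is a genuine $\calK$-\emph{functor}, compatible with the compositions supplied by the $\Fact_2$-data. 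Together these reconstruct $F : \cat \to \Cat(\calK)$ factoring $\wt{\hskip .5em}$. On morphisms I would match a natural transformation $\theta : F \to F'$, with identity-on-objects components and naturality square $F'(f)\circ\theta_A=\theta_B\circ F(f)$, against a monoid morphism in $\Jarka(O,\calK)$, the square being precisely the coherence condition~($\star\!\star\!\star\,\star$). The delicate points I would treat with care, rather than wave through, are the descent through Convention~\ref{sec:span-categories-span} relating $\funny$-maps to families of $\duo$-maps, and the verification that these two coherence diagrams translate verbatim into functoriality and naturality; everything else is a componentwise transport of structure that is manifestly invertible in both directions.
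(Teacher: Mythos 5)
You should first be aware that the paper contains no proof of Proposition~\ref{sec:fact-funct-mono} at all: it is stated as the evident categorification of the preceding table, with Propositions~\ref{sec:factorizations} and~\ref{Jarca1} carrying the substance. Your plan is therefore exactly the intended one, and your first isomorphism, your use of Convention~\ref{sec:span-categories-span}, and your matching of the coherence ($\star\!\star\!\star\,\star$) with naturality of transformations are all correct. The genuine gap is in your bookkeeping of the \emph{unit} data in the third (and, in smaller form, the second) isomorphism. You claim that axiom ($\star\star$) of Definition~\ref{na-plaz} yields functoriality and that axiom ($\star\!\star\!\star$) ``is exactly the statement that each $F(f)$ is a genuine $\calK$-functor.'' It is not: unwound at the globe $\sfG(f)$, axiom ($\star\!\star\!\star$) gives only compatibility of $u_{\sfG(f)}$ with the compositions $\mubar$, i.e.\ that $F(f)$ preserves composition, while ($\star\star$) gives multiplicativity of the family $u$ over factorizations $f = f_2f_1$ and the identity condition only over the trivial globes $\sfG(\id_A)$. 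Nothing you invoke delivers the other half of the ``usual conditions'' for a $\calK$-functor, namely that $F(f)$ preserves identities, i.e.\ that $u_{\sfG(f)}$ is compatible with $\nubar$. This is not a routine omission that follows formally: take $V=\duo=\calK=\Set$, let $\cat$ be the one-object category with morphism monoid $\mathbb{N}$, let $O$ be constant at a point, let $\ttM$ be a monoid containing a central idempotent $c\neq 1$, and set $u_{\sfG(t^n)}(x):=cx$ for $n\geq 1$ and $u_{\sfG(t^0)}:=\id$. All of ($\star$), ($\star\star$), ($\star\!\star\!\star$) hold (multiplicativity because $c$ is a central idempotent; the monoid-morphism unit condition only constrains the trivial globe), yet $F(t)$ does not preserve the unit, so it is not a functor to $\Cat(\calK)$. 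To close the argument you must extract the missing ($\star\!\star\!\star\,\star$)-shaped compatibility between $\nubar$, $u$, and the canonical monoid map $v\to\calK(\eta,\eta)$ of~(\ref{vaction}) from the formulation of Definition~\ref{na-plaz} as a lax-monoidal $\duo$-functor $\mathbf{1}_v\to\calK$ (the explicit list ($\star$)--($\star\!\star\!\star$) does not visibly contain it), or record it as an additional condition your reconstruction uses.

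The same issue infects your second isomorphism, where you declare the assignments ``visibly inverse on objects.'' The paper's Definition of a $\calK$-enriched category includes the datum (iii), $u(x,y):v\to\calK\big(\ttM(x,y),\ttM(x,y)\big)$, so an object of $\Fact_2$ carries strictly more structure than the pair $(\mubar,\nubar)$ you extract from a monoid in $\UJarka(O,\calK)$; your inverse exists only under the reading in which $\Cat(\calK)_0$ in~(\ref{fact2}) means categories enriched in the underlying monoidal $V$-category, with datum (iii) dropped. Your own opening paragraph, which locates the $v$-unit datum entirely in the passage from row two to row three, silently makes this choice, but it contradicts a literal reading of the paper's definition and should be stated and justified explicitly. (For row three this reading is consistent, since, as you correctly observe via Remark~\ref{replace}, the $u$-datum of each $FA$ is recovered as $u_{\sfG(\id_A)}$; but that makes the unit-preservation gap above the one real obstruction, and it is precisely the point your proof waves through.)
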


The correspondence $O \to \Jarka(O,\calK)$ behaves functorially as well:

\begin{proposition}
The correspondence $O \mapsto \Jarka(O,\calK)$ extends to a
contravariant functor $\Jarka(\calK)$ from the category of functors
$[\cat,\Set]$ and their natural transformations to the category of
monoidal span $\duo$-categories and their span $\duo$-functors.
\end{proposition}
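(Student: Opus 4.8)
The plan is to define, for each natural transformation $\alpha : O\to O'$ in $[\cat,\Set]$, a lax-monoidal span $\duo$-functor
\[
\Jarka(\alpha,\calK) : \Jarka(O',\calK)\longrightarrow \Jarka(O,\calK)
\]
by \emph{pulling back spans along $\alpha$}, and then to verify that $\alpha\mapsto\Jarka(\alpha,\calK)$ preserves identities and reverses composition. Throughout I write $\alpha_A : O(A)\to O'(A)$ for the components; the naturality square $\alpha_B\circ O(f)=O'(f)\circ\alpha_A$ for $f:A\to B$ in $\cat$ will be the identity that makes every construction below type-check. On objects I send a $\cat_0$-family $\ttE'=\{\ttE'_A(b',b'')\}$, $b',b''\in O'(A)$, to
\[
\Jarka(\alpha,\calK)(\ttE')_A(a',a''):=\ttE'_A(\alpha_A(a'),\alpha_A(a'')),\qquad a',a''\in O(A),
\]
which is precisely the restriction of the $\calK$-span $O'(A)\leftarrow\ttE'_A\to O'(A)$ along $\alpha_A$.

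Next I would give the effect on hom-objects. For a globe $\sfG=\glb ABfg$ the fibre $\Jarka(O',\calK)(\ttE',\ttF')_\sfG$ is the product $\prod_{b',b''\in O'(A)}\calK(\ttE'_A(b',b''),\ttF'_B(O'(f)(b'),O'(g)(b'')))$, and I define the $\sfG$-component of $\Jarka(\alpha,\calK)$ to be the reindexing $\duo$-map induced by $\alpha_A\times\alpha_A$ on indexing sets, i.e.\ the canonical map to the product $\prod_{a',a''\in O(A)}$ whose $(a',a'')$-component is the projection onto the factor indexed by $(\alpha_A a',\alpha_A a'')$. The point where naturality enters is the identification of the target: the $(a',a'')$-factor of the codomain is $\calK(\ttE'_A(\alpha_A a',\alpha_A a''),\ttF'_B(\alpha_B O(f)(a'),\alpha_B O(g)(a'')))$, and $\alpha_B O(f)=O'(f)\alpha_A$, $\alpha_B O(g)=O'(g)\alpha_A$ turn this into exactly the $(\alpha_A a',\alpha_A a'')$-factor of the domain, so the projection is well defined. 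Because these reindexing maps all arise from one and the same $\alpha_A\times\alpha_A$, they are compatible with the composition maps~(\ref{eq:3bis}) and with the unit $i_\ttE$; thus $\Jarka(\alpha,\calK)$ is a span $\duo$-functor.

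To promote it to a \emph{monoidal} span $\duo$-functor I would supply the lax comparison for $\tens$. Unwinding~(\ref{tens}), the $(a',a'')$-fibre of $\Jarka(\alpha,\calK)\ttE'_1\tens\Jarka(\alpha,\calK)\ttE'_2$ is $\coprod_{a\in O(A)}\ttE'_1(\alpha_A a',\alpha_A a)\odot\ttE'_2(\alpha_A a,\alpha_A a'')$, whereas that of $\Jarka(\alpha,\calK)(\ttE'_1\tens\ttE'_2)$ is $\coprod_{b\in O'(A)}\ttE'_1(\alpha_A a',b)\odot\ttE'_2(b,\alpha_A a'')$; the coproduct map induced by $\alpha_A:O(A)\to O'(A)$, sending the $a$-summand to the $\alpha_A(a)$-summand, is the required transformation $\Jarka(\alpha,\calK)\ttE'_1\tens\Jarka(\alpha,\calK)\ttE'_2\to\Jarka(\alpha,\calK)(\ttE'_1\tens\ttE'_2)$ (an isomorphism exactly when each $\alpha_A$ is bijective, whence the functor is strong). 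The unit comparison $\tte\to\Jarka(\alpha,\calK)(\tte')$ is the obvious map, the identity on the diagonal summands $a'=a''$ and $0\to\eta$ elsewhere, using~(\ref{eq:4}). Finally, contravariant functoriality is immediate from the object formula: $\Jarka(\alpha'\circ\alpha)(\ttE'')_A(a',a'')=\ttE''_A(\alpha'_A\alpha_A a',\alpha'_A\alpha_A a'')=\bigl(\Jarka(\alpha,\calK)\circ\Jarka(\alpha',\calK)\bigr)(\ttE'')_A(a',a'')$, and $\Jarka(\id_O,\calK)=\id$; the same reindexing bookkeeping gives these identities on hom-objects and on the monoidal comparisons.

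The only genuinely laborious point, which I expect to be the main obstacle, is to trace the naturality identity through the long composite defining the categorical composition in $\Jarka(O,\calK)$ and through the lax structure, i.e.\ to check that restriction along $\alpha_A$ commutes with all the $\coprod$/$\odot$-manipulations and with the interchange map of $\funny$. This is pure diagram bookkeeping rather than a conceptual difficulty, so I would record the relevant commuting squares and leave the routine chases to the reader.
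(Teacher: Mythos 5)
Your construction is essentially identical to the paper's proof: the paper also defines $\Phi^{*}$ on objects by restricting along the components $\Phi_A$ (i.e.\ $\Phi^{*}\ttE_A(b',b''):=\ttE_A(\Phi_A(b'),\Phi_A(b''))$) and on hom-fibres as the map of products induced by $\Phi_A\times\Phi_A$ on the indexing sets, invoking the naturality identity $\Phi_B O_1(f)=O_2(f)\Phi_A$ at exactly the point where you do. If anything, your write-up is slightly more complete, since you also exhibit the lax comparison maps for $\tens$ (the coproduct reindexing over $\alpha_A$, an isomorphism when $\alpha_A$ is bijective) and for the unit object, whereas the paper leaves the verification that the constructions assemble into a span $\duo$-functor entirely to the reader.
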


\begin{proof}
To prove the proposition, we need to construct  in a
functorial manner, for an arbitrary natural
transformation $\Phi : O_1 \to O_2$ of functors $O_1,O_2 : \cat \to
\Set$, a span-functor $\Phi^*: \Jarka(O_2) \to \Jarka(O_1)$.

Let $\ttE = \{\ttE_A(a',a'')\}$, $A \in \cat_0$, $a',a'' \in O_2(A)$,
be an object of $\Jarka(O_2,\calK)$. We then define $\Phi^*\ttE \in
\Jarka(O_1,\calK)$ to be the object  $\Phi^*\ttE =
\{\Phi^*\ttE_A(b',b'')\}$,  $A \in \cat_0$, $b',b'' \in O_1(A)$, with 
\[
\Phi^*\ttE_A(b',b'') := \ttE_A(\Phi_A(b'),\Phi_A(b'')),
\]
where $\Phi_A : O_1(A) \to O_2(A)$ is the set map induced by the
transformation $\Phi$. To finish the definition of $\Phi^*$ we need to
specify, for each globe in $\cat$ and each $\ttE, \ttF \in
\Jarka(O_2,\calK)$, a $\duo$-map
\[
\Jarka(O_2,\calK)(\ttE,\ttF)_{\ssglobe ABfg} \longrightarrow
\Jarka(O_1,\calK)(\Phi^*\ttE,\Phi^*\ttF)_{\ssglobe ABfg}.
\]
Expanding definitions, we see that we need to construct a $\duo$-map
from the product
\begin{equation}
\label{eq:7}
\prod_{a',a'' \in O_2(A)}
\duo\left(\posun{.4} \ttE_A(a',a''), \ttF_B(O_2(f)(a'), O_2(g)(a'')\right)
\end{equation}
to the product
\[
\prod_{b',b'' \in O_1(A)}
\duo\left(\posun{.4} \ttE_A(\Phi_A(b'),\Phi_A(b''), 
\ttF_B(\Phi_B O_1(f)(b'), \Phi_B O_1(g)(b'')\right).
\]
Since, of course, $\Phi_B O_1(f)(b') = O_2(f)( \Phi_A(b'))$ and
$\Phi_B O_1(g)(b'') = O_2(g)( \Phi_A(b''))$, the product in the last
display equals
\begin{equation}
\label{eq:9}
\prod_{b',b'' \in O_1(A)}
\duo\left(\posun{.4} \ttE_A(\Phi_A(b'),\Phi_A(b''), 
\ttF_B(O_2(f)( \Phi_A(b')),  O_2(g)( \Phi_A(b''))\right),  
\end{equation}
so we need to construct a map from the product~(\ref{eq:7}) to the
product~(\ref{eq:9}). We take the map induced by the map
\[
\Phi_A \times \Phi_A : O_1(A) \times O_1(A) \to O_2(A) \times O_2(A)
\]
of the indexing sets. It is not difficult to verify that the above
constructions indeed assemble into a span $\duo$-functor $\Phi^*:
\Jarka(O_2,\calK) \to \Jarka(O_1,\calK)$.
\end{proof}

\section{Tamarkin complex of a functor $F:\cat \to \Cat(\calK)$} 
\label{sec:tamark-compl-funct-1}

Let $F:\cat \to \Cat(\calK)$ be a functor. Then we have factorization
(\ref{fact3}) and therefore a~distinguished monoid $\ttM(F) \in
\Jarka(O,\calK).$ Let $\delta:\Delta \to V$ be a fixed cosimplicial object
in $V.$

\begin{definition} 
The {\em Tamarkin complex\/} of a functor $F: \cat \to \Cat(\calK)$ relative
to $\delta$ is the $\delta$-center $\CH_\delta(F,F)
:=\CH_\delta(\ttM(F),\ttM(F))$.
\end{definition}

\begin{example}\label{IdId}
Let $\cat = 1$. A functor $\calC:1\to \Cat(\calK)$ picks up a
$\calK$-category $\calC$. Let $\delta = I$ so the $\delta$-center of a
monoid is its center. Then $\CH_{\delta}(\calC,\calC)$ can be
identified with the duoid of $\calK$-natural transformations of the
identity functor $\Id:\calC\to \calC$. If $\calK=\duo = V$, we get the
classical center of $\calC$.
\end{example}

\begin{example}\label{classicalcenter} 
Take, in the previous example, $\calK = \duo = V=\Chain$ and $\delta$
as in Example~\ref{JArca}.  Then $\CH_{\delta}(\calC,\calC)$ is the
classical Hochschild complex of the $dg$-category $\calC$~\cite{Keller}.
\end{example}

The following result shows that the Tamarkin complex is a powerful tool
for constructing enrichments.

\begin{theorem} 
\label{enrichment}
Let $\cat = \Cat(\calK)$ and $F = \Id: \Cat(\calK)\to \Cat(\calK).$
Let again $\delta = I.$ Then $\CH_{\delta}(\Id,\Id) $ is a duoid in
$\funny(\cat,\duo)$ (i.e.~a $\duo$-enriched $2$-category) with the
property that its underlying duoid $u\big(\CH_{\delta}(\Id,\Id)\big)$
is equal to $\Cat(\calK)$, the $2$-category of $\calK$-categories,
$\calK$-functors and $\calK$-natural transformations.
\end{theorem}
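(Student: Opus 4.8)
\noindent
The plan is to recognize $\CH_\delta(\Id,\Id)$ as a center and to compute it globewise. Since $F=\Id$ obviously factorizes the object functor $\wt{(-)}$ as in~(\ref{fact3})---namely $\Id$ followed by the underlying-graph functor ${\it gr}$ and the vertex functor ${\it vrt}$---Proposition~\ref{Jarca1} shows that the distinguished object $\ttM:=\ttM(\Id)$, whose $A$-component is the underlying graph of the $\calK$-category $A$ (so that $\ttM_A(a',a'')=A(a',a'')$), is a monoid in the span-monoidal category $\Jarka(O,\calK)$. As $\delta=I$, the Tamarkin complex $\CH_\delta(\Id,\Id)=\CH_I(\ttM,\ttM)$ is by definition the center $Z(\ttM)$. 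Now $\Jarka(O,\calK)$ is monoidal over the duoidal category $\funny(\cat,\duo)$, so the whole center theory applies with $\funny(\cat,\duo)$ in the r\^ole of the base duoidal category: Theorem~\ref{Za_pet_dni_stehovani.} endows $Z(\ttM)$ with a canonical duoid structure in $\funny(\cat,\duo)$, and a duoid in $\funny(\cat,\duo)$ is exactly a $\duo$-enriched $2$-category. This settles the first assertion.

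For the second assertion I would identify the underlying duoid cell by cell. Conical limits in $\funny(\cat,\duo)$, in particular the equalizer~(\ref{equalizer}) defining $Z(\ttM)$, are formed globewise, so for a globe $\sfG=\glb ABfg$ the fiber $Z(\ttM)_\sfG$ is the equalizer in $\duo$ of
\[
\Jarka(O,\calK)(\tte,\ttM)_\sfG \eqv \Jarka(O,\calK)(\ttM,\ttM)_\sfG .
\]
Using the explicit formula~(\ref{eq:8}), together with $\ttM_B(x,y)=B(x,y)$ and the definition~(\ref{eq:4}) of $\tte$---which collapses the first product to the diagonal $a'=a''$---these two fibers become
\[
\textstyle\prod_{a\in\wt A}\calK\big(\eta,B(\Of(a),\Og(a))\big)
\quad\mbox{and}\quad
\prod_{a',a''\in\wt A}\calK\big(A(a',a''),B(\Of(a'),\Og(a''))\big),
\]
where $\Of,\Og$ are the object maps of the $\calK$-functors $f,g$.

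Applying $\duo(e,-)$ and passing to underlying sets, a point of $Z(\ttM)_\sfG$ is a family $\{\alpha_a:\eta\to B(f(a),g(a))\}_{a\in\wt A}$ of morphisms in the underlying category of $\calK$. I would then trace the two cofaces $d_0,d_1$ of Proposition~\ref{Lei} for the endomorphism operad of $\ttM$: on this fiber they send a morphism $\phi\in A(a',a'')$ to the two composites $\alpha_{a''}\circ f(\phi)$ and $g(\phi)\circ\alpha_{a'}$. Hence the equalizer condition $\alpha_{a''}\circ f(\phi)=g(\phi)\circ\alpha_{a'}$ is precisely the $\calK$-naturality square, so the set underlying $Z(\ttM)_\sfG$ is the set of $\calK$-natural transformations $f\Rightarrow g$. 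Thus the underlying duoid has the $\calK$-categories as objects, the $\calK$-functors as $1$-cells and the $\calK$-natural transformations as $2$-cells; it agrees with $\Cat(\calK)$ on all cells.

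The main obstacle is the remaining compatibility of the two duoid products furnished by Theorem~\ref{Za_pet_dni_stehovani.} with the compositions of $\Cat(\calK)$: one must check that $Z(\ttM)\tmes_0 Z(\ttM)\to Z(\ttM)$ restricts on underlying cells to horizontal composition (of $1$-cells and, by whiskering, of $2$-cells) and that $Z(\ttM)\tmes_1 Z(\ttM)\to Z(\ttM)$ restricts to vertical composition, while the units $\II0,\II1$ supply the identity $1$- and $2$-cells. This means tracing the structure maps of the center through the definitions~(\ref{mon1}),~(\ref{mon2}) of $\tmes_0,\tmes_1$ and the span composition~(\ref{eq:3bis}), and matching the outcome against the interchange law of $\Cat(\calK)$. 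The computation is routine but long, so I would present the cell-level identification above in full and then record the compatibility of the compositions, leaving the diagram chases to the reader as elsewhere in the paper.
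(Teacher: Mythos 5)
Your proposal is correct and coincides with what the paper intends: the paper's entire proof reads ``Direct verification,'' and your argument---obtaining the duoid structure from Proposition~\ref{Jarca1} together with Theorem~\ref{Za_pet_dni_stehovani.} applied with $\funny(\cat,\duo)$ as the base duoidal category, then computing the equalizer~(\ref{equalizer}) globewise via~(\ref{eq:8}) and~(\ref{eq:4}) and tracing $d_0,d_1$ to recover the $\calK$-naturality condition---is precisely that verification carried out in detail. The compatibility checks you defer (matching the $\tmes_0$- and $\tmes_1$-products of the center with horizontal and vertical composition in $\Cat(\calK)$, and the units with identity cells) are of the same routine kind the paper itself leaves unwritten, so nothing essential is missing.
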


\begin{proof} Direct verification.
\end{proof}

{}From now on we will use the notation $\Cat(\calK)$ for the
$\duo$-enriched $2$-category of $\calK$-categories, $\calK$-functors
and $\calK$-natural transformations.  Theorem \ref{enrichment}
provides a classical interpretation of the center of a monoid as the
object of natural transformations of the identity functor.  Indeed,
if $\ttM$ is a monoid in $\calK$, then $\Sigma(\ttM)$ is a one object
$\calK$-category. Remark \ref{loop} shows that
\[
\Cat(\calK)(\Sigma(\ttM),\Sigma(\ttM))
\]
is a monoidal $\duo$-category with the unit object given by the identity
functor $\Id$. 

\begin{corollary} 
The following four duoids in $\duo$
are naturally isomorphic:
\begin{enumerate}
\item 
the center $Z(\ttM)$ of a monoid $\ttM$ in $\calK$,
\item 
the center $Z(\Id)$ in $\Cat(\calK)\big(\Sigma(\ttM),\Sigma(\ttM)\big)$,
\item   
the duoid 
$\Cat(\calK)\big(\Sigma(\ttM),\Sigma(\ttM)\big)(\Id,\Id)$
in $\duo$ and
\item 
the duoid $\CH_I\big(\Sigma(\ttM),\Sigma(\ttM)\big)$ (see Example \ref{IdId}).
\end{enumerate}
\end{corollary}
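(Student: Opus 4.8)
The plan is to prove that the four duoids coincide by routing everything through the $\duo$-object of $\calK$-natural transformations of the identity functor. Write $\mathcal{E} := \Cat(\calK)\big(\Sigma(\ttM),\Sigma(\ttM)\big)$, which by Remark~\ref{loop} is a monoidal $\duo$-category whose unit object is $\Id$. First I would dispose of the two routine identifications. Since $\Id$ is the unit object of $\mathcal{E}$, Example~\ref{centerofunit} applied to $\mathcal{E}$ gives $Z(\Id)=\mathcal{E}(\Id,\Id)$, so (2) and (3) agree on the nose. Next, specializing Example~\ref{IdId} to the functor $1\to\Cat(\calK)$ picking out $\calC=\Sigma(\ttM)$ identifies $\CH_I\big(\Sigma(\ttM),\Sigma(\ttM)\big)$ with the duoid of $\calK$-natural transformations of $\Id\colon\Sigma(\ttM)\to\Sigma(\ttM)$, that is, precisely $\mathcal{E}(\Id,\Id)$; this yields (3)$\,\cong\,$(4). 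Thus everything reduces to the single isomorphism (1)$\,\cong\,$(3).

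For (1)$\,\cong\,$(3) I would compare equalizer presentations. By~(\ref{equalizer}), the center $Z(\ttM)$ is the equalizer in $\duo$
\[
Z(\ttM)\to \calK(\eta,\ttM)\eqv\calK(\ttM,\ttM),
\]
whose two parallel arrows are induced by left and right multiplication, obtained from $m\colon\ttM\odot\ttM\to\ttM$ through the canonical identifications $\eta\odot\ttM\cong\ttM\cong\ttM\odot\eta$. On the other side, $\Sigma(\ttM)$ is the one-object $\calK$-category with hom-object $\ttM$, and a $\calK$-natural transformation $\Id\Rightarrow\Id$ has a single component $\eta\to\ttM$ subject to the enriched naturality condition relative to the hom-object $\ttM=\Sigma(\ttM)(\ast,\ast)$. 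Expanding that condition shows that the $\duo$-object $\mathcal{E}(\Id,\Id)$ of such transformations is exactly the equalizer of the same parallel pair $\calK(\eta,\ttM)\eqv\calK(\ttM,\ttM)$, the two composites encoding naturality being precisely post- and pre-composition with the component, i.e.\ the left- and right-multiplication maps above. Hence the two equalizer diagrams coincide and $Z(\ttM)\cong\mathcal{E}(\Id,\Id)$.

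Finally I would check that all these identifications are isomorphisms of duoids, not merely of underlying $\duo$-objects. Each of the four carries a duoid structure: (1) and (4) by Theorem~\ref{Za_pet_dni_stehovani.}, and (2)$\,=\,$(3) by Example~\ref{centerofunit}. Because every identification above is a canonical map acting as the identity on the common underlying equalizer, it automatically intertwines the two monoid structures $\boxx_0$ and $\boxx_1$ that the center construction of Theorem~\ref{Za_pet_dni_stehovani.} places on each object. The main obstacle is the step (1)$\,\cong\,$(3): one must unwind the enriched naturality axiom for a transformation of the identity functor carefully enough to see that its two defining composites are literally the coboundary maps $d_0,d_1$ of the Hochschild cosimplicial object of $\End_\ttM$ furnished by Proposition~\ref{Lei}, so that the two equalizers are identical. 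Once this bookkeeping is done, matching the duoid structures across all four objects is routine.
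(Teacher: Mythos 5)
Your proposal is correct and takes essentially the same route as the paper: the paper likewise gets (2)$\,=\,$(3) from Example~\ref{centerofunit} and then shows by direct calculation that $\Cat(\calK)\big(\Sigma(\ttM),\Sigma(\ttM)\big)(\Id,\Id)$ coincides with the equalizer~(\ref{equalizer}) and hence with $Z(\ttM)$, with (4) handled by Example~\ref{IdId}. Your explicit unwinding of the enriched naturality condition into the coboundaries $d_0,d_1$ of Proposition~\ref{Lei}, and the check that the identifications respect the duoid structures, simply spell out what the paper compresses into the phrase ``direct calculation.''
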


\begin{proof}
By Example \ref{centerofunit}, $Z(\Id)\in \duo$ equals
$\Cat(\calK)(\Sigma(\ttM),\Sigma(\ttM))(\Id,\Id).$ On the other hand,
we establish by direct calculation that this object coincides with the
equalizer (\ref{equalizer}) and, therefore, is the center $Z(\ttM).$
\end{proof}

\begin{example}
\label{2cat} 
If $ \calK = \duo = V = \Set$, then $\CH_{I}(\Id,\Id)$ is the
$2$-category of categories $\Cat$ with its cartesian closed structure.
On the other hand, if we define $\delta:\Delta\to \Set$ in dimension $n$
as the set $\{0,\ldots,n\}$ with the obvious coface and codegeneracy
operators, then $\CH_{\delta}(\Id,\Id)$ is the sesquicategory of
categories, functors and their unnatural transformations (so it is
$\Cat$ with its second closed symmetric monoidal structure~\cite{KL}).
\end{example} 

\begin{example} \label{Graycat}
Let $\calK = \duo = V=\Cat$ with its cartesian closed monoidal
structure and $\delta$ be as in Example~\ref{JScenter}. Let $F=\Id:
\Cat(\Cat) \to \Cat(\Cat)$. Then $\CH_{\delta}(\Id,\Id)$ is the
$\mathbf{Gray}$-category $\mathbf{Gray}$ of $2$-categories,
$2$-functors and pseudonatural transformations \cite{GPS}.
\end{example}

\begin{example} 
Replacing $\delta$ from (\ref{JScenter}) by $\delta$ from
(\ref{laxcenter}), we obtain a nonsymmetric version of $\mathbf{Gray}$
which consists of $2$-categories, $2$-functors and lax-natural (or
colax-natural if we change the orientation in $\delta$)
transformations \cite{Gray}.
\end{example}

We end up this section by showing that when $\calK = \duo =V$ is the category
$\Chain$ of chain complexes, $\cat$ is the category $\Cat(\Chain)$ of
small dg-categories, $F = \Id:\Cat(\Chain)\to \Cat(\Chain)$ and
$\delta$ is again as in Example~\ref{JArca}, the resulting
Tamarkin complex indeed coincides with the original Tamarkin's
construction $\bfRhom(-,-)$ 
from~\cite[Definition~3.0.2]{tamarkin:CM07}.  Let us recall its
definition.

For small dg-categories $A$, $B$ and dg-functors  $f,g :A \to B$, one
defines, 
for each $n \geq 0$,  
\[
\bfhom^n(f,g) := \prod_{\Rada a0n \in A} 
\Chain\big(A(a_0,a_1) \ot \cdots \ot A(a_{n-1},a_n),B(f(a_0),g(a_n))\big),
\]
with the product taken over all $(n+1)$-tuples $(\Rada a0n)$ of
objects of $A$. As shown in~\cite{tamarkin:CM07}, the objects
$\bfhom^n(f,g)$ assemble into the cosimplicial chain complex
$\bfhom^*(f,g)$.   

Let $O = \wt{(-)} : \Cat(\Chain) \to \Set$ be the object functor.  As
explained in Example~\ref{Jaruska}, the corresponding category
$\Jarka(O,\Chain)$ consists of collections of chain complexes $\ttE =
\{\ttE_A(a',a'')\}$, indexed by objects $a',a'' \in A$ of small
dg-categories $A$.

 Theorem~\ref{Jarca1} therefore gives a
distinguished monoid $\ttM$ in the span-monoidal category $\Jarka(O,\Chain)$.
The monoid $\ttM = \{\ttM_A(a',a'')\}$ has a simple explicit
description.  For $A \in \cat$, one has
\[
\ttM_A(a',a'') := A(a',a''),
\]
the $\Chain$-enriched hom-functor in $A$. To specify a monoid structure of
$\ttM$, one needs to choose, for any dg-category $A \in \cat$ and any 
dg-functor $f: A \to B\in \cat$, the following three pieces of data:
\begin{align*}
\mubar_{\sfG(A)}& \in  \hskip -.3em \prod_{a',a,a'' \in A} \hskip -.3em
\Chain\ll A(a',a) \otimes A(a,a''),A(a',a'')\rr
\\
\nubar_{\sfG(A)}& \in \ \ \prod_{a \in  A} \
\Chain\ll k,A(a,a)\rr,\ \mbox {and}
\\
u_{\sfG(f)}& \in \prod_{a',a'' \in A}\Chain \ll A(a',a''),B(f(a'),f(a''))\rr.
\end{align*}
The element $\mubar_{\sfG(A)}$ is given by the enriched
composition in $A$, $\nubar_{\sfG(A)}$ by the enriched unit and
$u_{\sfG(f)}$ is part of the definition of the $\Chain$-enriched
functor $f$.

One can consider, as in Definition~\ref{sec:endom-oper-algebr}, 
the endomorphism $1$-operad $\End_\ttM$ of $\ttM$. The monoid
structure of $\ttM$ is, by
Proposition~\ref{sec:spn-mono-categ-2}, equivalent to a $1$-operad map  
$\fAss \to \End_\ttM$, i.e.~$\End_\ttM$ is a multiplicative operad in
the sense of Definition~\ref{sec:mutipl-1-oper}. By
Proposition~\ref{Lei}, $\End_\ttM$ therefore carries a natural
structure of a cosimplicial object in $\Chain$. The 
following  statement of this section is now obvious:

\begin{theorem}
\label{sec:tamark-compl-funct}
Let $f,g: A\to B$ be dg-functors between dg-categories. Then
the cosimplicial hom-functor $\bfhom^\bullet(f,g)$ defined 
in~\cite{tamarkin:CM07} and recalled above, 
is the fiber over the globe
\begin{equation}
\label{eq:14}
\raisebox{1.6em}{\rule{0pt}{0pt}}
\globe ABfg
\end{equation} 
of the cosimplicial span-object 
associated to the multiplicative endomorphism
$1$-operad $\End_\ttM$ of the distinguished monoid $\ttM$ in the 
span-monoidal category $\Jarka(O,\Chain)$. 
\end{theorem}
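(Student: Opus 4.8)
The plan is to compute the fiber over the globe $\sfG = \glb ABfg$ of the cosimplicial object attached to $\End_\ttM$ straight from the definitions and to recognize Tamarkin's $\bfhom^\bullet(f,g)$ in the answer. First I would unwind $\End_\ttM(n) = \Jarka(O,\Chain)(\ttM^{\tens n},\ttM)$ and read off its fiber over $\sfG$ from formula~(\ref{eq:8}). Recalling that $\widetilde{(-)} = O$ acts on objects as the corresponding dg-functor and that $\ttM_B(x,y) = B(x,y)$, this gives
\[
\End_\ttM(n)_{\sfG} = \prod_{a',a'' \in A}
\Chain\big((\ttM^{\tens n})_A(a',a''),\, B(f(a'),g(a''))\big).
\]

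Next I would expand the iterated local product. By repeated use of~(\ref{tens}) together with $\ttM_A(a',a'') = A(a',a'')$ one obtains
\[
(\ttM^{\tens n})_A(a',a'') \cong
\coprod_{a_1,\ldots,a_{n-1} \in A}
A(a',a_1)\otimes \cdots \otimes A(a_{n-1},a'').
\]
Applying the natural isomorphism $\Chain(\coprod_i X_i, Y) \cong \prod_i \Chain(X_i,Y)$ and relabelling $a_0 := a'$, $a_n := a''$, the outer product over $a',a''$ of homs out of this coproduct collapses into a single product over all $(n+1)$-tuples $(a_0,\ldots,a_n)$, giving exactly
\[
\End_\ttM(n)_{\sfG} \cong \prod_{a_0,\ldots,a_n \in A}
\Chain\big(A(a_0,a_1)\otimes \cdots \otimes A(a_{n-1},a_n),\, B(f(a_0),g(a_n))\big)
= \bfhom^n(f,g).
\]
This settles the identification of the underlying collection, the remaining point being that the cosimplicial operators agree under this isomorphism.

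The cosimplicial structure on $\End_\ttM$ is the one produced by Proposition~\ref{Lei} from the multiplicative structure $\fAss \to \End_\ttM$ of Theorem~\ref{Jarca1}, whose components $m_0, m_1, m_2$ are respectively the enriched unit, identity, and composition of the dg-categories, bundled with the functor data. I would trace the explicit formulas for $d_i$ and $s_i$ from Proposition~\ref{Lei}, restricted to the fiber over $\sfG$. For $1 \le i \le n$ the map $d_i$ uses $m_2$ in the $i$-th slot, which under the source-target bookkeeping of the span operad becomes the composition $A(a_{i-1},a_i)\otimes A(a_i,a_{i+1}) \to A(a_{i-1},a_{i+1})$ in $A$; the codegeneracies $s_i$ use $m_0$ and insert the identity $k \to A(a_i,a_i)$. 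These are literally the inner faces and degeneracies of the Hochschild cosimplicial complex.

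The hard part will be the two outer cofaces $d_0$ and $d_{n+1}$, where the functors $f$ and $g$ must enter through the source and target maps of the globe $\sfG$. There I would argue that the span composition $\gamma$ in $\Jarka(O,\Chain)$, when restricted to the fiber over $\sfG$ and combined with $(m_1 \boxx_1 \id)\boxx_0 m_2$ (respectively $(\id \boxx_1 m_1)\boxx_0 m_2$) as in Proposition~\ref{Lei}, precisely realizes composition in $B$ of the $f$-image (respectively $g$-image) of the outermost $A$-morphism with the value of a cochain. Checking that the globe labels propagate so that $s(\sfG_1)=f$ and $t(\sfG_2)=g$ recover Tamarkin's $d_0$ and $d_{n+1}$ is where the source-target conditions for span operads have to be invoked with care; this is the main obstacle, the rest being routine coproduct-hom bookkeeping.
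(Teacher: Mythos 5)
Your proposal is correct and takes the same route as the paper: the paper in fact gives no separate argument at all, declaring the theorem ``now obvious'' after precisely the preparation you reproduce --- the explicit description of the monoid $\ttM$ with its data $\mubar_{\sfG(A)}$, $\nubar_{\sfG(A)}$, $u_{\sfG(f)}$, and Tamarkin's definition of $\bfhom^n(f,g)$. Your computation of the fiber $\End_\ttM(n)_{\sfG}\cong\bfhom^n(f,g)$ via iterating~(\ref{tens}) and the coproduct-to-product isomorphism, together with tracing the cofaces and codegeneracies of Proposition~\ref{Lei} fiberwise (with $f$ and $g$ entering the outer cofaces $d_0$, $d_{n+1}$ through the source--target conditions of the span operad, since $m_1=u$ is supported on the trivial globes $\sfG(f)$, $\sfG(g)$), is exactly the verification the paper leaves implicit.
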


Tamarkin defined, in~\cite{tamarkin:CM07}, the right
derived hom-functor $\bfRhom(f,g)$ as the totalization
$|\bfhom^\bullet(f,g)|$  
of the cosimplicial  hom-functor $\bfhom^\bullet(f,g)$. In the terminology
of Subsection~\ref{sec:hochsch-object-homot},
$|\bfhom^\bullet(f,g)|$ is therefore the $\sfG$-fiber, where $\sfG$ is
the globe in~(\ref{eq:14}), of $\CH_{\delta}(\Id,\Id)$.

\section{The Deligne conjecture in  monoidal $\duo$-categories}
\label{sec:deligne-conj-mono}

For $n$ a positive integer, $n$-operads are higher analogs of
(nonsymmetric) operads. Their pieces have arities given by trees with
$n$-levels. While ordinary operads live in monoidal categories,
$n$-operads live in augmented monoidal $n$-globular categories. We
begin this section by introducing, for $n=0,1,2$, a simplified version
of $n$-operads tailored for the needs of the present paper. A general
approach can be found in~\cite{batanin:globular}. The relation between our
restricted case and the general one is addressed in
Remark~\ref{Jaruska_je_pusinka}.

\subsection{$2$-operads and their algebras in  duoidal $V$-categories.}

Let us recall the definition of the category $\Omega_k$ of $k$-trees, for $k \leq
2$. The category of {\em $0$-trees $\Omega_0$} is the terminal
category~$1$. Its unique object is denoted $U_0$.

The category of {\em $1$-trees $\Omega_1$} is the category of finite
ordinals $(n) := \{\rada 1n\}$, $n \geq 0$, and their order-preserving
maps.  As usual, we interpret $\{1,\ldots,n\}$  for $n=0$ as the empty~set.
The terminal object of $\Omega_1$ is denoted $U_1 := (1)$. When the
meaning is clear from the context, we will simplify the notation and
denote the object $(n) \in \Omega_1$ simply by $n$.

Notice that $\Omega_1$ is isomorphic to the `algebraic' version
$\Deltaalg$ of the basic simplicial category $\Delta$, i.e.~to
$\Delta$ augmented by the empty set, and can be characterized as the
free strict monoidal category generated by a monoid.  The category
$\Omega_1$ can also be interpreted as the subcategory of open maps of
Joyal's (skeletal) category of intervals $\I$, whereas $\Delta$ is
isomorphic to $\I^{op}$, see~\cite[Section~2]{batanin-markl}.

The definition of the category $\Omega_2$ of $2$-trees is more
involved. A {\it $2$-tree} $T$ is a morphism $t:n\to m$ in
$\Omega_1$.  {\it Leaves of height $2$ } of the tree $T$ are, by
definition, elements from $\{1,\ldots,n\}.$ {\it Leaves of height $1$}
of $T$ are those elements $i\in \{1,\ldots,m\}$ for which
$t^{-1}(i)=\emptyset.$ The set of all leaves of the tree $T$ has a
natural linear order defined by counting leaves when we are going
around the tree in the clockwise direction.

There are exactly two $2$-trees with one leaf. The tree $U_2 := (1 \to
1)$ has one leaf of height $2$ while the tree $zU_1 := (0\to 1)$ has
one leaf of height $1$.
The tree $z^2U_0 := (0 \to 0)$ has no leaves.  A {\em map of
$2$-trees}
\begin{equation}
\label{eq:16}
\sigma:T =(n\to m) \to S= (p\to q)
\end{equation}
is a commutative diagram of maps in $\Set$:

\begin{center}
{\unitlength=.8mm
\begin{picture}(200,28)(-45,10)
\thicklines
\put(40,35){\makebox(0,0){\mbox{$\{1,\ldots,n\}$}}}
\put(40,30){\vector(0,-1){15}}
\put(55,35){\vector(1,0){15}}
\put(55,10){\vector(1,0){15}}
\put(62,37){\makebox(0,0){\makebox(0,0)[b]{\scriptsize $t$}}}
\put(62,12){\makebox(0,0){\makebox(0,0)[b]{\scriptsize $s$}}}
\put(90,22){\makebox(0,0){\mbox{\scriptsize $\sigma_1$}}}
\put(35,22){\makebox(0,0){\mbox{\scriptsize $\sigma_2$}}}
\put(85,35){\makebox(0,0){\mbox{$\{1,\ldots,m\}$}}}
\put(85,30){\vector(0,-1){15}}
\put(40,10){\makebox(0,0){\mbox{$\{1,\ldots,p\}$}}}
\put(85,10){\makebox(0,0){\mbox{$ \{1,\ldots,q\}$}}}
\end{picture}}
\end{center}
such that 
\begin{itemize}
\item[(i)] 
$\sigma_1$ is order preserving and
\item[(ii)]  for any $i\in \{1,\ldots,m\}$, the restriction of
$\sigma_2$ to $t^{-1}(i)$ is order preserving.
\end{itemize}

We denote by $\Omega_2$ the category of {\em $2$-trees\/}. Its terminal object
is the tree $U_2 = (1 \to 1)$.  
The category $\Omega_2$ is a monoidal category with the
structure $+$ given by the fiberwise ordinal sum (gluing the
roots of $2$-trees in geometric terms). The unit of this monoidal
product is~$z^2U_0$.

\begin{remark}
There are obvious truncation functors
\begin{equation}
\label{free2cat}
\Omega_2\stackrel{\partial}{\to}
\Omega_1\stackrel{\partial}{\to} \Omega_0.
\end{equation} If we
consider them as the source-target functors $s=t=\partial$, then (\ref{free2cat})
becomes a strict monoidal $2$-globular category i.e.~the
$2$-categorical object in $\Cat$. It can be
characterized as being the free strict monoidal globular $2$-category generated
by an internal $2$-category \cite{batanin:street}.

Similarly, one can characterize the strict monoidal $1$-globular
category
\[
\Omega_1\stackrel{\partial}{\to} \Omega_0
\]
as the free strict monoidal globular $1$-category generated by an
internal $1$-category. Notice that this universal property is an
`extension' of the universal property of $\Delta_{alg}$ since any
strict monoidal category can be considered as a one object monoidal
globular $1$-category \cite{batanin:globular}.
\end{remark}

Any morphism of $2$-trees $\sigma:T\to S$ as in~(\ref{eq:16}) has its
{\em fibers\/}. Given a leaf $i\in \{1,\ldots,p\}$ of height $2$, the
restriction of $t$ determines an order preserving map
\[
t_i: \sigma_2^{-1}(i)\to \sigma_1^{-1}(s(i))
\] 
which we consider as a $2$-tree and call the {\em fiber\/} over the leaf
$i$. In the case of a leaf $j\in \{1,\ldots,q\}$ of height $1$,
$\sigma^{-1}_1(j)$ is an ordered subset of $\{1,\ldots,m\}$ 
which determines a $1$-tree in $\Omega_1$, the {\em fiber\/} over the leaf
$j$. With a slight abuse of notation, we will denote this fiber by
$\sigma^{-1}_1(j)$ and use the same convention throughout the rest of
this section.

Since the set of leaves of $S$ has a natural linear order, the set of
fibers of $\sigma$ also inherits this order. So, for a $\sigma:T\to S$
we will denote $T_1,\ldots,T_k$ the set of its fibers in this order.
Let us fix $n\in \{0,1,2 \}$.

In item~($\star$) of the next definition we consider the composition
$T\stackrel{\sigma}{\rightarrow} S \stackrel{\omega}{\rightarrow} R$
of maps of $n$-trees, $n \leq 2$. We will use the following
notation. Let $\Rada S1k$ be the fibers of $\omega$ and $T_i :=
\sigma^{-1}(S_i)$, $1 \leq i \leq k$. Denote also by $\sigma_i : T_i
\to S_i$ the restriction $\sigma|_{T_i}$ and
$\rada{T_{i,1}}{T_{i,m_i}}$ the fibers of $\sigma_i$. Clearly $\Rada
T1k$ are precisely the fibers of the composition $\omega\sigma$ and
\[
\{\rada {T_{1,1}}{T_{1,m_1}},\ldots,\rada {T_{k,1}}{T_{k,m_k}}\}
\]
the set of fibers of $\sigma$.

In the following definition where $V = (V,\otimes,I)$ is the basic
monoidal category, we introduce our restricted version of
$n$-operads. The terminology will be justified in Remark~\ref{Jaruska_je_pusinka}.

\begin{definition}
\label{defnoper}  Let $0\le n\le 2.$ 
An {\em $n$-operad in $V^{(n)}$\/} is a collection $A(T), \ T\in \Omega_i \
, \ i \le n$, of objects of $V$ equipped with the following structure:
\begin{itemize}
\item[(i)]
morphisms $\xi_i: I \rightarrow  A(U_i) \ , \ i \le n$ (the units);
\item[(ii)] 
for every morphism $\sigma:T \rightarrow S$ in $\Omega_i,
\ i\le n$, with fibers $\Rada T1k$ a morphism
\[
m_{\sigma}: A(T_1)\otimes \cdots \otimes A(T_k)\otimes A(S)
\rightarrow A(T) \mbox{\ \ (the multiplication}).
\]
\end{itemize}
The structure operations are required to satisfy the following conditions.
\begin{itemize}
\item[($\star$)]
For any composite $T\stackrel{\sigma}{\rightarrow} S
\stackrel{\omega}{\rightarrow} R$, the associativity diagram 
\begin{center}
\unitlength=1mm
\begin{picture}(100,40)(-30,3)
\thicklines
%LEVY DOLNI
\put(-10,10){{\makebox(0,0)[t]{$\displaystyle\bigotimes_{1 \leq i \leq k}
 A({T_{i,\bullet}})\otimes A({S_{\bullet}}) \otimes A(R)$}}}
\put(-10,31){\vector(0,-1){19}}
\put(-10,15){\vector(0,1){17}}
\put(57,29){\makebox(0,0)[l]{{\scriptsize $m_{\omega\sigma}$}}}
\put(-12,22){\makebox(0,0)[r]{{\scriptsize $\simeq$}}}
\put(57,15){\makebox(0,0)[l]{{\scriptsize $m_\sigma$}}}
\put(27,9){\makebox(0,0)[b]{{\scriptsize $\id \ot m_\omega$}}}
\put(31,39.2){\makebox(0,0)[b]{{\scriptsize $\bigotimes_{i=1}^k m_{\sigma_i}\ot \id$}}}
\put(55,33){\vector(0,-1){6}}
\put(55,11){\vector(0,1){8}}
\put(19,7){\vector(1,0){15}}
\put(20,37.2){\vector(1,0){22}}
%LEVY HORNI
\put(-10,35){\makebox(0,0){$\displaystyle\bigotimes_{1 \leq i
      \leq k} \big(A({T_{i,\bullet}})\otimes A({S_{i}})\big)  \otimes A(R)$}}

%PRAVY DOLNI
\put(55,10){\makebox(0,0)[t]{{$\displaystyle\bigotimes_{1 \leq i \leq k}
 A({T_{i,\bullet}}) \otimes A(S)$}}}
%4
\put(55,37){\makebox(0,0){{$A({T_{\bullet}} )\otimes A(R)$}}}
%5
\put(55,22.5){\makebox(0,0){{$A(T)$}}}
\end{picture}
\end{center}
in which
\begin{align*}
A({S_{\bullet}})&: = A({S_1})\otimes \cdots \otimes A({S_k}),
\\
A({T_{i,\bullet}})&: = A({T_{i,1}}) \otimes  \cdots\otimes
A({T_{i,m_i}}),\ 1 \leq i \leq k,\
\mbox { and}
\\
A({T_{\bullet} })&:=  A({T_1})\otimes \cdots \otimes A({T_k});
\end{align*}
commutes.

\item[($\star\star$)]
For the identity $\sigma = id : T\rightarrow T$, the diagram
\begin{center}
{\unitlength=1mm
\begin{picture}(50,18)(60,6)
\thicklines
\put(95,20){\vector(-1,0){6}}
\put(60,17){\vector(0,-1){8}}
\put(60,20){\makebox(0,0){\mbox{
$A({U_{i_0}})\otimes \cdots \otimes A({U_{i_n}}) \otimes A(T)$}}}
\put(114,20){\makebox(0,0){\mbox{${I}\otimes \cdots \otimes {I} \otimes A(T)$}}}
\put(60,5){\makebox(0,0){\mbox{$A(T)$}}}
\put(105,15){\vector(-4,-1){36}}
\put(90,9){\makebox(0,0){\mbox{\scriptsize$\id$}}}
\put(58,12.5){\makebox(0,0)[r]{\mbox{\scriptsize$m_\id$}}}
\end{picture}}
\end{center}
commutes.

\item[($\star\!\star\!\star$)]
For $0\le i \le n$ and the 
unique morphism $T\rightarrow U_i$ in $\Omega_i$, the diagram
\begin{center}
{\unitlength=1mm
\begin{picture}(50,18)(60,6)
\thicklines
\put(87,20){\vector(-1,0){13}}
\put(62,17){\vector(0,-1){8}}
\put(60,20){\makebox(0,0){\mbox{\small$A(T)\otimes A({U_i}) $}}}
\put(98,20){\makebox(0,0){\mbox{\small$A(T)\otimes I$}}}
\put(60,5){\makebox(0,0){\mbox{\small$A(T)$}}}
\put(95,17){\vector(-3,-1){30}}
\put(84,11){\makebox(0,0){\mbox{\scriptsize $\id$}}}
\end{picture}}
\end{center}
commutes.
\end{itemize}
\end{definition} 

\begin{example} 
A $0$-operad in $V^{(0)}$ consists of an object $A(U_0)$. The
structure maps equip it with a monoid structure in $V$.
\end{example} 

\begin{example} 
A $1$-operad $A$ in $V^{(1)}$ is given by a nonsymmetric operad $A'$
in $V$ (which is the same as a $1$-operad in $V$ if we interpret $V$
as a duoidal category) with $A'(k) := A((k))$, $k \geq 0$, and a monoid
$A(U_0)$. The map of $1$-trees $id:(0)\to (0)$ induces an operadic
multiplication
\[
A(U_0)\otimes A((0)) \to A((0))
\]
which equips $A'(0)$ with a $A(U_0)$-module structure. This structure
is compatible with the rest of the operadic structure of $A'$ as in
Definition~\ref{1-operad} with $v$ replaced by $A(U_0)$.
\end{example}

\begin{definition}
The $0$-operad $\fAss_0$ defined as the monoid~$I \in V$. The
classical associativity $1$-operad $\fAss_1$ is such that $\fAss_1(T) :=
I$ for each $n$-tree $T$, $n \leq 1$. Similarly, we define the
$2$-operad $\fAss_2$ with $\fAss_2(T) := I$ for any $n$-tree $T$, $n
\leq 2$, with all structure maps being the canonical isomorphisms.
\end{definition}

\begin{definition}
Let $k<n$. The restriction of an $n$-operad $A$ on $\Omega_i , i\le k$,
is a $k$-operad $tr_k(A)$ in $V^{(k)}$ called the {\it $k$-truncation}  of $A$.
\end{definition}

\begin{definition}
An $n$-operad in $V^{(n)}$ is called {\it $0$-terminal} if $tr_0(A)= \fAss_0.$ 
An $n$-operad in $V^{(n)}$ is called {\it $1$-terminal} if $tr_1(A)= \fAss_1.$ 
\end{definition}

Nonsymmetric operads in $V$ are therefore exactly $0$-terminal
$1$-operads in $V^{(1)}$.

\begin{remark}
\label{Jaruska_je_pusinka}
According to \cite{batanin:globular}, general $n$-operads live in augmented monoidal
$n$-globular categories. The above notion of an $n$-operad in $V^{(n)}$ is the
specialization of this general notion to the augmented monoidal
$n$-globular category $V^{(n)}$ defined, for $n \leq 2$, as follows.

The category $V^{(0)}$ is just the category $V$ with its monoidal structure. 
The category $V^{(1)}$ is the following monoidal $1$-globular
category: in dimension $0$ we have $V,$ in dimension $1$ we have
$V\times V$. The source and target functors coincide and equal to
the projection on the second variable. The functor $z:V\to V\times V$
is defined by $z(x) = (I,x).$ The monoidal structure is induced the by
monoidal structure of $V$ in an obvious manner.

To construct $V^{(2)}$, we add to $V^{(1)}$ the product $V\times
V\times V$ in dimension $2$, with the projection to the second and
third coordinates as its $1$-source and $1$-target functors. We leave
to the interested reader to describe the rest of the augmented
monoidal structure of $V^{(2)}$.
\end{remark}

\begin{remark}
\label{Sigma} 
There is another construction of an augmented monoidal $n$-globular
category associated with a symmetric monoidal category $V.$ This
augmented monoidal $n$-globular category $\Sigma^n V$ has terminal
category $\mathbf1$ in dimensions strictly less than $n$ and $V$ in
dimension~$n$.  An $n$-operad in $\Sigma^n$ was called
{\em $(n-1)$-terminal operad in $V$\/} \cite{batanin:AM08}. The relation
between our terminology here and the terminology of
\cite{batanin:AM08} is following.

There is a globular functor $\Sigma^n (V) \to V^{(n)}$ which in
dimension $k<n$ sends a unique object of $\mathbf1$ to the
$(k+1)$-tuple $(I,\ldots,I)$ and in dimension $n$ it sends an object
$X\in V$ to the tuple $(X,I,\ldots,I).$ It is not hard to check that
this is an augmented monoidal globular inclusion. An $n$-operad in
$V^{(n)}$ is $(n-1)$-terminal in the present terminology if it takes
values in the subcategory $\Sigma^{n}(V).$ Therefore, our terminology
is compatible with the terminology of~\cite{batanin:AM08}.

\end{remark}

A $2$-tree $T = (n\stackrel{t}{\to} m)$ is called {\it pruned} if $t$
is an epimorphism. 
Equivalently, a $2$-tree is pruned if all its leaves are in height $2$.
Any $2$-tree $T$ contains the maximal pruned subtree
$\iota:T^{(p)}\to T$1. It is obvious that the fibers of $\iota$ are
$U_2$ or $z^2U_0$. For any $1$-terminal $2$-operad $A$ one therefore
has the
morphism
\begin{equation}
\label{pruned} 
A(T) \to I\otimes\cdots\otimes I\otimes A(T)\to
A(U_2)\otimes\cdots\otimes A(U_2)\otimes A(T) \to A(T^{(p)}).
\end{equation}

\begin{definition}
\label{prruned}
A $1$-terminal $2$-operad is {\em pruned\/} if (\ref{pruned}) is an
isomorphism for any $T\in \Omega_2$.
 \end{definition}

If $\cat$ is a $V$-category, then with every object $X\in \cat$ we can
associate a $0$-operad $\End_{X0}$, which is just the endomorphism
monoid $\cat(X,X).$

Let now $\euo = (\euo,\boxx,e)$ be a monoidal $V$-category. 
For any object $X\in \euo$
we will define its endomorphism $1$-operad $\End_{X1}$ in $V^{(1)}$. To do this, we
introduce the tensor power of $X$ as follows. 
\begin{itemize} 
\item[(0)]
With a unique $0$-tree $U_0$ we associate its tensor power as
\[
X^{U_0} := e.
\]
\item[(1)] 
With a $1$-tree $n\in \Omega_1$ we associate the tensor power 
\[
X^n := \underbrace{X\boxx\cdots\boxx X}_{n}
\]
with the convention 
\[
X^{0} = X^{zU_0} := e.
\]
\end{itemize}
The definition of $X^n$ above looks tautological, but notice that $n$
abbreviates $(n) \in \Omega_1$.

\begin{definition} 
The endomorphism $1$-operad of $X\in \euo$ is given by
\[
\End_{X1}(T) := \euo(X^T,X^{U_i}),
\]
where $T\in \Omega_i, \ i = 0,1$. 
\end{definition}

Let now $\duo$ be a duoidal $V$-category. For any object $X\in \duo$
we will define its endomorphism $2$-operad $\End_{X2}$ in $V^{(2)}$. To do this, we
define first the tensor power of an object $X$. 
\begin{itemize} 
\item[(0)]
With a unique $0$ tree $U_0$ we associate the tensor power
\[
X^{U_0} := e;
\]
\item[(1)] 
With a $1$-tree $n\in \Omega_1$ we associate the tensor power 
\[
X^n := \underbrace{v\boxx_0\cdots\boxx_0 v}_{n}
\]
with the convention 
\[
X^{0} = X^{zU_0} := e.
\]
\item[(2)] 
With a $2$-tree $T$ we associate the tensor power $X^T$ as
follows.  Let $T = (n\stackrel{t}{\to}m)\ne z^2U_0$ and let $n_i: =
t^{-1}(i)$ for $1 \leq i \leq m$. Then we put
\[
X^T := (X^{\boxx_1^{n_1}})\boxx_0\cdots\boxx_0 (  X^{\boxx_1^{n_m}}).
\]
We use here the convention that $X^{\boxx_1^{0}} := v$. 
We complete the definition by putting  
\[
X^{z^2U_0} := e.
\]  
\end{itemize}
We believe that the `ideological' portrait of $X^T$ in
Figure~\ref{symbol} clarifies our definition.

\begin{figure}
\begin{center}
{% Picture saved by xtexcad 2.4
\unitlength=1.2pt
\begin{picture}(150.00,80.00)(0.00,0.00)
\thicklines
\put(85.00,40.00){\makebox(0.00,0.00){$\cdots$}}
\put(130.00,70.00){\makebox(0.00,0.00){\scriptsize $\cdots$}}
\put(40.00,70.00){\makebox(0.00,0.00){\scriptsize $\cdots$}}
\put(110.00,40.00){\makebox(0.00,0.00)[r]{\scriptsize $\boxx_0$}}
\put(63.00,40.00){\makebox(0.00,0.00)[l]{\scriptsize $\boxx_0$}}
\put(43.00,40.00){\makebox(0.00,0.00){\scriptsize $\boxx_0$}}
\put(140.00,70.00){\makebox(0.00,0.00){\scriptsize $\boxx_1$}}
\put(120.00,70.00){\makebox(0.00,0.00){\scriptsize $\boxx_1$}}
\put(100.00,70.00){\makebox(0.00,0.00){\scriptsize $\boxx_1$}}
\put(50.00,70.00){\makebox(0.00,0.00){\scriptsize $\boxx_1$}}
\put(30.00,70.00){\makebox(0.00,0.00){\scriptsize $\boxx_1$}}
\put(10.00,70.00){\makebox(0.00,0.00){\scriptsize $\boxx_1$}}
\put(150.00,72.00){\makebox(0.00,0.00)[b]{$\hphantom{(}X)$}}
\put(110.00,74.00){\makebox(0.00,0.00)[b]{$X$}}
\put(90.00,72.00){\makebox(0.00,0.00)[b]{$(X \hphantom{)}$}}
\put(60.00,72.00){\makebox(0.00,0.00)[b]{$\hphantom{(}X)$}}
\put(20.00,74.00){\makebox(0.00,0.00)[b]{$X$}}
\put(0.00,72.00){\makebox(0.00,0.00)[b]{$(X \hphantom{)}$}}
\put(70.00,0.00){\makebox(0.00,0.00){\scriptsize $\bullet$}}
\put(30.00,40.00){\makebox(0.00,0.00){\scriptsize $\bullet$}}
\put(120.00,40.00){\makebox(0.00,0.00){\scriptsize $\bullet$}}
\put(150.00,70.00){\makebox(0.00,0.00){\scriptsize $\bullet$}}
\put(110.00,70.00){\makebox(0.00,0.00){\scriptsize $\bullet$}}
\put(90.00,70.00){\makebox(0.00,0.00){\scriptsize $\bullet$}}
\put(60.00,70.00){\makebox(0.00,0.00){\scriptsize $\bullet$}}
\put(20.00,70.00){\makebox(0.00,0.00){\scriptsize $\bullet$}}
\put(0.00,70.00){\makebox(0.00,0.00){\scriptsize $\bullet$}}
\put(57.0,40.00){\makebox(0.00,0.00){\scriptsize $\bullet$}}
\put(57.0,44.00){\makebox(0.00,0.00)[b]{$v$}}
\put(70.00,0.00){\line(5,4){50.00}}
\put(30.00,40.00){\line(1,-1){40.00}}
\put(120.00,40.00){\line(1,1){30.00}}
\put(120.00,40.00){\line(-1,3){10.00}}
\put(90.00,70.00){\line(1,-1){30.00}}
\put(30.00,40.00){\line(1,1){30.00}}
\put(20.00,70.00){\line(1,-3){10.00}}
\put(30.00,40.00){\line(-1,1){30.00}}
\put(70.00,0.00){\line(-1,3){13.00}}
\end{picture}}
\caption{\label{symbol}An `ideological' picture of $X^T$. Leaves
of height $2$ (resp.~$1$) are decorated by $X$ (resp.~$v$).  The
decorations of vertices of height $2$ (resp.~$1$) are then multiplied
by $\boxx_1$ (resp.~$\boxx_0$), with the $\boxx_1$-multiplication
performed first.}
\end{center}
\end{figure}
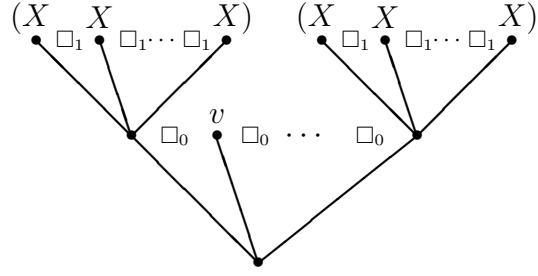

\begin{definition} 
The endomorphism $2$-operad of $X\in \duo$ is given by
\[
\End_{X2}(T) = \duo(X^T,X^{U_i}),
\]
where $T\in \Omega_i, \ i = 0,1,2$. 
\end{definition}

\begin{lemma}  
The collection $\End_{X2}(T), T\in \Omega_i, i = 0,1,2$, has a natural
structure of a $2$-operad in $V^{(2)}.$
\end{lemma}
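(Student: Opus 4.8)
The plan is to construct the structure operations from three ingredients only — the composition of $\duo$, the $V$-functoriality of the two tensor products $\boxx_0,\boxx_1$, and a single family of ``reassembly'' morphisms in $\duo$ — and then reduce the axioms to the coherence of the duoidal structure. Throughout I may assume, by Theorem~\ref{semistrict}, that $\duo$ is strict, so that each $X^T$ is an unbracketed word in copies of $X$ and $v$ built with $\boxx_0$ (outer) and $\boxx_1$ (inner), exactly as in Figure~\ref{symbol}, and every associativity and unit constraint is an identity (note that strictness does \emph{not} make the interchange (ii) or the maps (iii)--(v) invertible, and the argument will only ever use them in the forward direction). Since $U_i\in\Omega_i$ one has $\End_{X2}(U_i)=\duo(X^{U_i},X^{U_i})$, and I take $\xi_i:I\to\End_{X2}(U_i)$ to be the $V$-morphism naming $\id_{X^{U_i}}$.

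The heart of the matter is the construction, for each map $\sigma:T\to S$ of $n$-trees ($n\le 2$) with fibers $T_1,\dots,T_k$, of a canonical morphism $\rho_\sigma:X^T\to\Theta_\sigma$ in $\duo$, where $\Theta_\sigma$ is obtained from the word $X^S$ by substituting $X^{T_\ell}$ into the slot of the $\ell$-th leaf of $S$ (a height-$2$ leaf carries $X=X^{U_2}$ in $X^S$, a height-$1$ leaf carries $v=X^{U_1}$). I build $\rho_\sigma$ as follows: regroup the outer $\boxx_0$-product of $X^T$ along the fibers of $\sigma_1$ (strict associativity); apply the interchange (ii) to each group, turning a $\boxx_0$ of $\boxx_1$-words into a $\boxx_1$ of $\boxx_0$-words — precisely the direction provided by (ii) and precisely what is needed to create the $\boxx_1$-layer at which the fibers are grouped; then regroup the resulting $\boxx_1$-factors along the fibers of $\sigma_2$ to recognise the words $X^{T_\ell}$. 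The degenerate cases (height-$1$ leaves of $S$, empty fibers, and the unit trees $zU_1,z^2U_0$ with $X^{zU_1}=X^{z^2U_0}=e$) are absorbed by the structure maps (iii) $e\to e\boxx_1 e$, (iv) $v\boxx_0 v\to v$, and (v) $e\to v$. The multiplication is then the $V$-morphism
\[
\End_{X2}(T_1)\otimes\cdots\otimes\End_{X2}(T_k)\otimes\End_{X2}(S)\longrightarrow\End_{X2}(T)
\]
obtained by first tensoring the $\phi_\ell:X^{T_\ell}\to X^{U_{i(T_\ell)}}$ via the $V$-functoriality of $\boxx_0,\boxx_1$ into a map $\Theta_\sigma\to X^S$, then composing in $\duo$ with $\psi:X^S\to X^{U_{i(S)}}$, and finally precomposing with $\rho_\sigma$; here one uses that $T,S\in\Omega_i$ forces $X^{U_{i(T)}}=X^{U_{i(S)}}$, so the target is correct.

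It remains to check ($\star$)--($\star\!\star\!\star$). The two unit conditions are routine: for $\sigma=\id$ all fibers are unit trees, $\rho_{\id}$ is (after strictification) an identity, and substituting the $\xi_i$ makes $m_{\id}$ the identity, giving ($\star\star$); the case $T\to U_i$ giving ($\star\!\star\!\star$) is dual and uses only the unit data together with (iii)--(v). The real work, and the principal obstacle, is the associativity ($\star$) for $T\stackrel{\sigma}{\to}S\stackrel{\omega}{\to}R$. It reduces to one compatibility of the reassembly family, namely that $\rho_{\omega\sigma}$ factors as $\rho_\sigma$ followed by the $S$-shaped combination of the $\rho_{\sigma_i}$ attached to the restrictions $\sigma_i:T_i\to S_i$, together with associativity of composition in $\duo$ and naturality of (ii). Establishing this compatibility is exactly where the coherence of the duoidal structure is consumed: the two ways of pushing the $\boxx_0$-factors through the $\boxx_1$-factors are identified by the two associativity diagrams and the unitality diagrams of Definition~\ref{sec:spn-mono-categ-1}. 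With $\duo$ strict all bracketings coincide, so this identification collapses to the naturality of the interchange and its own associativity coherence, and ($\star$) becomes a finite diagram chase. Conceptually the whole statement is the instance, for the augmented monoidal $2$-globular category $V^{(2)}$, of the general fact that every object of an augmented monoidal $n$-globular category carries an endomorphism $n$-operad (cf.~Remark~\ref{Jaruska_je_pusinka} and~\cite{batanin:globular}); the construction above merely makes that instance explicit.
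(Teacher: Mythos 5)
Your proposal is correct and takes essentially the same route as the paper: the units are the identity-naming maps $\xi_0=\id_e$, $\xi_1=\id_v$, $\xi_2=\id_X$; your reassembly morphism $\rho_\sigma$ is precisely the paper's canonical map $X^\sigma$ (which the paper constructs in two stages --- first for suspension codomains $S=(k\to 1)$ via the iterated interchange, then for general $S$ as the product $X^{\sigma_1}\boxx_0\cdots\boxx_0 X^{\sigma_l}$ over the ordinal-sum decomposition $S=P_1+\cdots+P_l$, which unwinds to your uniform substitution map into $\Theta_\sigma$); and the multiplication is the identical composite built from the $V$-functoriality of $\boxx_0,\boxx_1$, precomposition with $X^\sigma$, and composition in $\duo$. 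The paper likewise works in a strict duoidal category (invoking Theorem~\ref{semistrict} at the start of its section on operads) and, like you, disposes of the unit axioms directly while leaving the associativity~($\star$) as a routine, coherence-driven diagram chase.
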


\begin{proof} 
We construct first the units $\xi_i : I\to \End_{X2}(U_i), \ i=
0,1,2$. We have $\End_{X2}(U_0) = \duo(X^{U_0},X^{U_0}) = \duo(e,e),$
and we define $\xi_0 := \id_e:I\to \duo(e,e)$. Analogously we define
$\xi_1 :=\id_v:I\to \duo(v,v)$ and $\xi_2 := \id_X:I\to \duo(X,X)$.
 
The $0$-truncation of $\End_X$ is clearly the endomorphism monoid of
$e\in\duo.$ The $1$-truncation is the endomorphism operad of the
monoid $v\in \duo$ with the obvious multiplication.

To define the multiplication with respect to morphisms of $2$-trees, we
begin with the special case when the codomain of $\sigma:T\to S$ has the
form $S= (k\to 1).$ We will say that such a tree is a {\it suspension}
of the $1$-tree $k.$ If $k=0$, then $T = (0\to m)$ and the unique fiber
of $\sigma$ is equal to the $1$-tree $m.$ We define the operadic
multiplication as the composite in $\duo$:
\[ 
\duo(\underbrace{v\boxx_0\cdots\boxx_0 v}_m,v)\otimes 
\duo(v,X)\to \duo(\underbrace{v\boxx_0\cdots\boxx_0 v}_m,X).
\]

Suppose $k>0$ and $T= (n\to m).$ Then the fiber over a leaf $i \in
\{\rada 1k\}$ has the
form $T_i = (t_i: n_i\to m)$. 
{}For $j\in \{1,\ldots,m\}$ let $n_{ij}
:= t^{-1}_i(j).$ There is then a canonical morphism
\begin{equation}
\label{general-nterchange}
X^{\sigma}: X^T\to X^{T_1}\boxx_1\cdots\boxx_1 X^{T_{k}}.
\end{equation}
To see it, we  observe that 
\[
X^T \cong
\big( ( X^{\boxx_1^{n_{1,1}}})\boxx_1\cdots\boxx_1 (X^{\boxx_1^{n_{k,1}}})\big)
\boxx_0 \ldots \boxx_0 
\big((X^{\boxx_1^{n_{1,m}}})\boxx_1\cdots\boxx_1(X^{\boxx_1^{n_{k,
      m}}})\big),
\]
and
\[
X^{T_i} \cong 
(X^{\boxx_1^{n_{i,1}}})\boxx_0\cdots\boxx_0(  X^{\boxx_1^{n_{i,m}}}),\
1 \leq i \leq k.
\]
We define  $X^{\sigma}$ as the interchange morphism
\begin{align*}
X^T & \cong 
\big((X^{\boxx_1^{n_{1,1}}})\boxx_1\cdots\boxx_1(X^{\boxx_1^{n_{k,1}}})\big)
\boxx_0 \cdots \boxx_0 
\big((X^{\boxx_1^{n_{1,m}}})\boxx_1\cdots\boxx_1(X^{\boxx_1^{n_{k,m}}})\big)
 \longrightarrow 
\\
& \longrightarrow 
\big((X^{\boxx_1^{n_{1,1}}})\boxx_0\cdots\boxx_0(X^{\boxx_1^{n_{1,m}}})\big)
\boxx_1 \cdots \boxx_1 
\big((
X^{\boxx_1^{n_{k,1}}})\boxx_0\cdots\boxx_0(X^{\boxx_1^{n_{k,m}}})\big) 
\cong
\\
&  \cong X^{T_1}\boxx_1\cdots\boxx_1X^{T_{k}}. 
\end{align*}

The operadic multiplication $m_{\sigma}  : \End_{X2}(T_1) \ot \cdots \ot
\End_{X2}(T_k) \ot \End_{X2}(S) \to  \End_{X2}(T)$ is now 
defined as the composition
\begin{align*}
&\duo(X^{T_1},X)\otimes\cdots\otimes\duo(X^{T_k},X)\otimes \duo(X^S,X)
\to
\\
&\to \duo(X^{T_1}\boxx_1\cdots\boxx_1 X^{T_{k}},X^{\boxx_1^k})
\otimes  \duo(X^{\boxx_1^k},X)\to
\\
&\to \duo(X^T,X^S)\otimes\duo(X^S,X)\to \duo(X^T,X).
\end{align*}
The first map in the above composition exists because
$\boxx_1$ is a $V$-functor, the second map is induced by
$X^\sigma$.

Let now $S$ be a general $2$-tree. If $S= z^2U_0$ then $\sigma =
\id_{z^2U_0}$ and $m_{\sigma}$ is simply the composite
\[
\duo(e,e)\otimes \duo(e,X)\to \duo(e,X).
\]
Let $S\ne z^2U_0.$ Then $S $ is canonically the ordinal sum of trees,  
$S= P_1+\cdots+P_l,$ where $P_i$ is, for $1 \leq i \leq l$ ,
a suspension of a $1$-tree $k_i$. Moreover, there obviously exist
$2$-trees $\Rada Q1l$ such that $T = Q_1+ \cdots + Q_l$ and
$\sigma:T\to S$ is the sum 
$\sigma = \sigma_1 + \cdots +\sigma_l$, for some 
$\sigma_i:Q_i\to P_i$, $1 \leq i \leq l$. 
We denote $T_{i,j}$ the fiber of $\sigma$ over a leaf $j\in P_i$.
  
Observe that $X^T = X^{Q_1}\boxx_0\cdots\boxx_0 X^{Q_l}$ and 
$X^S = X^{P_1}\boxx_0\cdots\boxx_0 X^{P_l}$. We now
define
\[
X^{\sigma}: X^T\to (X^{T_{1,1}}\boxx_1\cdots\boxx_1
X^{T_{1,k_1}})\boxx_0\cdots \boxx_0 (X^{T_{l,1}}\boxx_1\cdots\boxx_1
X^{T_{l,k_l}})
\]
as the product $X^{\sigma} = X^{\sigma_1}\boxx_0\cdots \boxx_0 X^{\sigma_l}$.
Finally, we define the operadic multiplication $m_\sigma$~as 
\begin{align*}
&\duo(X^{T_{1,1}},X)\otimes\cdots
\otimes\duo(X^{T_{l,k_l}},X)\otimes\duo(X^S,X)\to
\\
&\to \duo(X^{T_{1,1}}\boxx_1\cdots\boxx_1 X^{T_{1,k_1}},X^{P_1})\otimes 
\cdots \otimes  \duo(X^{T_{l,1}}\boxx_1\cdots\boxx_1 
X^{T_{l,k_l}},X^{P_l})\otimes \duo(X^S,X) \to
\\
&\to \duo\big((X^{T_{1,1}}\boxx_1\cdots\boxx_1
X^{T_{1,k_1}})\boxx_0\cdots 
\boxx_0 (X^{T_{l,1}}\boxx_1\cdots\boxx_1
X^{T_{l,k_l}}),X^S\big)\otimes 
\duo(X^S,X)\to
\\   
&\to \duo(X^T,X^S)\otimes\duo(X^S,X)\to \duo(X^T,X).
\end{align*}
We used again that $\boxx_0$ and $\boxx_1$ are $V$-functors.  
We leave the tedious but obvious verification of the associativity of thus 
defined operadic multiplication to the reader.
\end{proof}

Observe that the $1$-truncation of the $2$-operad $\End_{X2}$ is the
endomorphism $1$-operad of the monoid $v\in (\duo,\boxx_0,e).$ So we
have a canonical operadic map
\[
k_v:\fAss_1\to tr_1(\End_{X2}).
\] 

\begin{definition}
\label{sec:2-operads-their}
An {\em algebra\/} of a pruned $2$-operad $A$ in $V^{(2)}$ is an object $X\in \duo$ equipped with a map of $2$-operads
\[
k: A\to \End_{X2}
\] 
such that $tr_1(k) = k_v$.
\end{definition}

As in Subsection~\ref{algebras}, one can show that  
$A$-algebras form a $V$-category. Notice also that a more precise name
for algebras in Definition~\ref{sec:2-operads-their} would be {\em
$1$-terminal\/} $A$-algebras, but we opted for a simpler terminology.

\begin{example}\label{algebraass_2}
We leave as an exercise for the reader to show that algebras of
$\fAss_2$ are exactly duoids in $\duo$.
\end{example}

A proof of the following theorem will be given in
\cite{batanin-berger-markl:homotopycenter}:

\begin{theorem}
\label{condensation} 
Let $\delta$ be a fixed cosimplicial object in $V.$ Then there is a
pruned $2$-operad $\Coend_{\Tam_2}(\delta)$ with a canonical action on
$\CH_{\delta}(A)$ for any multiplicative $1$-operad $A$ in $\duo$. In
particular, such an action exists on the $\delta$-center
$\CH_{\delta}(\ttM,\ttM)$ of a monoid $\ttM$ in a monoidal
$\duo$-category.

If $\delta = I$, then $\Coend_{\Tam_2}(\delta) = \fAss_2$ and the action
of $\Coend_{\Tam_2}(\delta)$ recovers the canonical duoid structure on
$\CH_{I}(A)$ constructed in Theorem~\ref{Za_pet_dni_stehovani.}. 
\end{theorem}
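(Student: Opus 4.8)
The plan is to reduce the theorem to two tasks: constructing the pruned $2$-operad $\Coend_{\Tam_2}(\delta)$ in $V^{(2)}$, and exhibiting, for each multiplicative $1$-operad $A$, a map of $2$-operads
\[
k : \Coend_{\Tam_2}(\delta) \longrightarrow \End_{X2}, \qquad X := \CH_\delta(A),
\]
satisfying $tr_1(k) = k_v$, where $\End_{X2}$ is the endomorphism $2$-operad of the object $X \in \duo$ supplied by the Lemma above. Since $X = \Tot_\delta(A) = \int_{n \in \Delta} A(n)^{\delta(n)}$ depends on $A$ only through its cosimplicial object (Proposition~\ref{Lei}), the natural strategy is to build $\Coend_{\Tam_2}(\delta)$ as a \emph{coendomorphism $2$-operad of $\delta$}: an object of $V^{(2)}$ carrying exactly those natural operations that every $\delta$-totalization of a multiplicative $1$-operad admits. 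Once this is set up correctly the action $k$ is essentially determined, and the content of the theorem is concentrated in the construction of the operad and in the verification of the $2$-operad axioms.

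Concretely, for a $2$-tree $T = (n \to m)$ I would define $\Coend_{\Tam_2}(\delta)(T) \in V$ by a condensation-type coend over $\Delta$, engineered so that the two layers of the tree match the two tensor products of $\duo$: height-$2$ vertices, multiplied by $\boxx_1$, feed the operadic composition $\gamma$ of $A$, while height-$1$ vertices, multiplied by $\boxx_0$, feed the cosimplicial coboundaries $d_i$ of Proposition~\ref{Lei}. To produce $k$, I would then transport the interchange morphisms $X^\sigma : X^T \to X^{T_1}\boxx_1\cdots\boxx_1 X^{T_k}$ of the endomorphism $2$-operad (built in the Lemma) through the totalization: the $\boxx_0$- and $\boxx_1$-operations on $X = \Tot_\delta(A)$ are induced levelwise from $A$ by the middle-interchange law of $\duo$, the multiplication $\gamma$, and the canonical map $c : e \to v$, and the cosimplicial object $\delta$ converts these cosimplicial-level operations into honest maps of totalizations via the universal (co)end property. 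Naturality of this whole assignment in $A$ makes $k$ a morphism of $2$-operads.

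For pruned-ness I would verify that the morphism (\ref{pruned}) is an isomorphism for every $T$: a non-pruned $2$-tree differs from its maximal pruned subtree $T^{(p)}$ only in its height-$1$ leaves, which in the coend correspond to unit/degeneracy directions ($\xi_1$-data, i.e.\ $v$-decorations on the $\duo$ side), and these are absorbed invertibly because of the cosimplicial (codegeneracy) structure; hence collapsing them is an iso. For the specialization $\delta = I$, the cotensors $A(n)^{I} \cong A(n)$ trivialize, $\Tot_I(A)$ reduces to the equalizer (\ref{equalizer}), every $\delta$-decoration in the coend is the unit $I \in V$, and so $\Coend_{\Tam_2}(I)(T) \cong I$ for all $T$, i.e.\ $\Coend_{\Tam_2}(I) = \fAss_2$. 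By Example~\ref{algebraass_2} a $\fAss_2$-action is precisely a duoid structure, and I would then check by direct inspection that the $\boxx_0$- and $\boxx_1$-multiplications and the two units produced by $k$ coincide with the composites written in the proof of Theorem~\ref{Za_pet_dni_stehovani.} (the $\boxx_0$-product $\pi\boxx_0\pi$ followed by $\gamma$ and $s_0$, and the analogous $\boxx_1$-product routed through $A(2)$).

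The hardest part will be the coherence bookkeeping: showing that the operations assembled from the interchange law of $\duo$, the operadic associativity of $\gamma$, and the cosimplicial identities of $\delta$ satisfy the $2$-operad axioms ($\star$)--($\star\!\star\!\star$). This is exactly where the Deligne-type content sits, since the compatibility of the two tree-layers is governed by the same middle interchange that makes $\duo$ duoidal. A secondary difficulty is upgrading the defining coend into a genuine \emph{pruned} $2$-operad (rather than a bare collection) and ensuring it is suitably contractible, so that the statement has the full force of a Deligne conjecture; these technical points, together with the requisite model-categorical input, are precisely what the sequel \cite{batanin-berger-markl:homotopycenter} is devoted to.
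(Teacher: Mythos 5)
There is no proof in this paper to compare against: Theorem~\ref{condensation} is explicitly stated without proof, with the sentence ``A proof of the following theorem will be given in \cite{batanin-berger-markl:homotopycenter}'' preceding it, and the remark after the theorem indicates the intended construction --- $\Coend_{\Tam_2}(\delta)$ is obtained by \emph{condensation} of Tamarkin's colored $2$-operad $\Tam_2$ from \cite{tamarkin:CM07}, adapting the condensation techniques for symmetric colored operads of \cite{batanin-berger} to the $2$-operadic setting. Your plan is broadly aligned with that announced strategy: a coend-type formula over $\Delta$ in which height-$2$ data feed the operadic composition $\gamma$ and height-$1$ data feed the cosimplicial coboundaries of Proposition~\ref{Lei} is exactly the shape a condensation of $\Tam_2$ would take, and your identification of the $\delta = I$ specialization with the duoid structure of Theorem~\ref{Za_pet_dni_stehovani.} (via Example~\ref{algebraass_2}) is the right consistency check.

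That said, as a proof your proposal has a genuine gap, and it sits exactly where you locate the difficulty: $\Coend_{\Tam_2}(\delta)(T)$ is never actually defined (``engineered so that\ldots'' is a desideratum, not a formula), so neither the $2$-operad axioms ($\star$)--($\star\!\star\!\star$), nor the claim $tr_1(k)=k_v$, nor pruned-ness can be verified. Your pruned-ness argument in particular is suspect: you assert that height-$1$ leaves are ``absorbed invertibly because of the cosimplicial (codegeneracy) structure,'' but the map~(\ref{pruned}) collapses height-$1$ leaves via the \emph{units} $\xi_2$, not via codegeneracies, and in the condensation framework pruned-ness is arranged by construction (one defines the operad on pruned trees and extends, or checks that the coend formula is insensitive to $z^2U_0$-fibers), not by inverting anything. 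You also blur this theorem with Theorem~\ref{duoidalDeligne}: contractibility of $\Coend_{\Tam_2}(\delta)$ is \emph{not} part of the present statement and requires extra hypotheses (a standard system of simplices, strong $\delta$-reductivity of the lattice path operad), whereas the action itself must exist for an arbitrary cosimplicial $\delta$. In fairness, the authors themselves defer all of this to the sequel, so your proposal reproduces the state of the paper rather than falling short of it --- but it should be read as a plausible program, not a proof.
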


\begin{remark} 
The notation $\Coend_{\Tam_2}(\delta)$ comes from
\cite{batanin-berger}. In that paper the authors developed
techniques of condensation of symmetric colored operads. The operad
$\Coend_{\Tam_2}(\delta)$ is also a condensation, but we
condense a colored $2$-operad $\Tam_2$ instead of a
colored symmetric operad. The colored operad
$\Tam_2$ was actually constructed by Tamarkin in
\cite{tamarkin:CM07}. A~different and short description of $\Tam_2$
can be found in \cite[page~25]{batanin-berger}.
\end{remark}

\subsection{Deligne's conjecture for $\delta$-center of a monoid.}

Algebras of contractible $1$-operads in $\Chain$ are known as
$A_{\infty}$-algebras. In fact, we usually replace an action of a
contractible $1$-operad by a minimal cofibrant resolution of $\fAss_1$
to get a canonical notion of an $A_{\infty}$-algebra.  We use the same
philosophy and think about algebras of a contractible $2$-operad in
$V^{(2)}$ as duoids in $\duo$ up to all higher homotopies (see Example
\ref{algebraass_2}).

\begin{definition} 
Let $V$ be a monoidal model category and $n\le 2$. An $(n-1)$-terminal
$n$-operad $A$ equipped with an operad map $A\to \fAss_n$ is called
{\em contractible\/} if, for each $n$-tree~$T$, the map $A(T)\to \fAss_n(T)=I$ 
is a weak equivalence.
\end{definition} 
 
\begin{theorem}
\label{duoidalDeligne} 
Let $V$ be a monoidal model category and $\delta$ be a standard system
of simplices for $V$ such that the lattice path operad is strongly
$\delta$-reductive in the sense of
\cite[Definition~3.7]{batanin-berger}. 
Then the operad $\Coend_{\Tam_2}(\delta)$ is contractible.  In
particular, the $\delta$-center $\CH_{\delta}(\ttM,\ttM)$ of a monoid
$\ttM$ in a monoidal $\duo$-category is an algebra of a contractible
$2$-operad.
\end{theorem}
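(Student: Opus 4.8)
The plan is to check the two clauses in the definition of contractibility directly against the condensation description of $\Coend_{\Tam_2}(\delta)$, pushing all the homotopical content onto the hypothesis about the lattice path operad. By Theorem~\ref{condensation} the operad $\Coend_{\Tam_2}(\delta)$ is pruned, hence in particular $1$-terminal, so the first clause of contractibility is free; the required operad map $\Coend_{\Tam_2}(\delta)\to\fAss_2$ arises by functoriality of condensation from the canonical augmentation of $\Tam_2$ onto the terminal colored $2$-operad, whose $\delta$-condensate is $\fAss_2$. Thus the whole theorem reduces to showing that for every $2$-tree $T\in\Omega_2$ the augmentation
\[
\Coend_{\Tam_2}(\delta)(T)\longrightarrow \fAss_2(T)=I
\]
is a weak equivalence in $V$.

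The next step is to unwind the condensation coend of \cite{batanin-berger} tree by tree. For a fixed $T$ the object $\Coend_{\Tam_2}(\delta)(T)$ is a $\delta$-weighted realization of the values of $\Tam_2$ over an indexing category $\calC_T$ whose combinatorics is dictated by the fibers and linear orders of $T$ entering Definition~\ref{defnoper}, and the map to $I$ is precisely the canonical map comparing this realization with the realization of the constant terminal diagram. The essential geometric point is that $\calC_T$, via the comparison with the lattice path operad, is the category of simplices of the nerve of a lattice-path poset attached to $T$; here the colored $2$-operad $\Tam_2$ of~\cite{tamarkin:CM07} must be matched against the lattice path operad of~\cite{batanin-berger}.

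The crux is then a single appeal to the hypothesis. Strong $\delta$-reductiveness of the lattice path operad in the sense of~\cite[Definition~3.7]{batanin-berger} is exactly the statement that the $\delta$-realization of these lattice-path nerves is weakly contractible, i.e.\ weakly equivalent to $I$; substituting this into the previous step makes the augmentation $\Coend_{\Tam_2}(\delta)(T)\to I$ a weak equivalence for each $T$. I expect the main obstacle to lie in this matching: Batanin--Berger's reductiveness machinery is developed for symmetric colored operads, whereas $\Tam_2$ is a globular, non-symmetric $2$-operad, so one must transport the comparison across the relationship between $2$-operads and symmetric operads (in the spirit of Remark~\ref{Jaruska_je_pusinka}) and verify that the tree-indexed diagrams $\calC_T$ are carried to the lattice-path nerves to which the hypothesis applies.

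The final ``in particular'' is immediate. By Theorem~\ref{condensation} the pruned $2$-operad $\Coend_{\Tam_2}(\delta)$ acts canonically on $\CH_\delta(\ttM,\ttM)$ for any monoid $\ttM$ in a monoidal $\duo$-category; since we have just shown this operad to be contractible, $\CH_\delta(\ttM,\ttM)$ is exhibited as an algebra of a contractible $2$-operad, which completes the proof.
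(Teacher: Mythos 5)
First, a point of comparison that matters here: the paper contains no proof of Theorem~\ref{duoidalDeligne} at all. Immediately after Corollary~\ref{homotopicalDeligne} the authors state that the proofs of both results ``will be addressed in \cite{batanin-berger-markl:homotopycenter}'', so there is no in-paper argument to measure your proposal against. Judged on its own, your outline does follow the strategy the surrounding text clearly intends --- by the remark after Theorem~\ref{condensation}, $\Coend_{\Tam_2}(\delta)$ is a condensation of the colored $2$-operad $\Tam_2$, so reducing contractibility to a treewise weak-equivalence statement fed by strong $\delta$-reductiveness is the right shape of argument, and your ``in particular'' step via Theorem~\ref{condensation} is correct and genuinely immediate.

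However, as a proof your text has real gaps exactly where the content lies. The crux step --- identifying each $\Coend_{\Tam_2}(\delta)(T)$ with a $\delta$-realization of lattice-path nerves and transporting \cite[Definition~3.7]{batanin-berger} across the comparison between the symmetric colored lattice path operad and the globular, non-symmetric $\Tam_2$ --- is asserted, and you yourself name it as ``the main obstacle'' without resolving it; but this matching \emph{is} the theorem, and it is presumably why the authors relegate the proof to the sequel. Two subsidiary claims are also shaky: strong $\delta$-reductiveness in \cite{batanin-berger} is not ``exactly the statement that the $\delta$-realization of these lattice-path nerves is weakly contractible'' --- it is a compatibility condition on $\delta$ with the filtration/truncation structure of the lattice path operad, from which such contractibility statements are \emph{derived}, so your single ``substitution'' hides a nontrivial deduction; and the augmentation $\Coend_{\Tam_2}(\delta)\to\fAss_2$ does not obviously arise ``by functoriality of condensation from the augmentation of $\Tam_2$'', since the condensation of a terminal colored $2$-operad is a coendomorphism-type object built from $\delta$, not $\fAss_2$ on the nose (the paper only identifies $\Coend_{\Tam_2}(\delta)$ with $\fAss_2$ in the degenerate case $\delta=I$); the map to $\fAss_2$ therefore needs its own construction, presumably from the augmentation of the standard system of simplices. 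So what you have is a plausible roadmap that concentrates all the unproved homotopical content into the matching step, not a complete argument.
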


\begin{corollary}[duoidal Deligne's conjecture]
\label{homotopicalDeligne} 
There is a canonical action of a contractible $2$-operad on the
homotopical center of a monoid $\ttM$ which lifts the duoid structure
on the center of $\ttM$.
\end{corollary}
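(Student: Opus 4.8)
The plan is to read Corollary~\ref{homotopicalDeligne} as a direct consequence of Theorem~\ref{duoidalDeligne}, the defining property of the homotopy center, and the $\delta = I$ specialization of Theorem~\ref{condensation}. First I would fix, once and for all, a cofibrant contractible standard system of simplices $\delta$ for $V$ for which the lattice path operad is strongly $\delta$-reductive in the sense of \cite{batanin-berger}; such a $\delta$ exists and is precisely the kind used in the definition of the homotopy center $\CH(\ttM,\ttM) = \CH_\delta\big(Fb(\ttM),Fb(\ttM)\big)$. With this choice, Theorem~\ref{condensation} produces the pruned $2$-operad $\Coend_{\Tam_2}(\delta)$ together with its canonical action on the $\delta$-center $\CH_\delta\big(Fb(\ttM),Fb(\ttM)\big)$, which by definition is the homotopy center; Theorem~\ref{duoidalDeligne} then guarantees that this $2$-operad is contractible. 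This already yields the first assertion: a canonical action of a contractible $2$-operad on $\CH(\ttM,\ttM)$.

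It remains to verify the lifting statement, and here I would exploit the functoriality of the condensation construction in the cosimplicial variable. The augmentation $\epsilon : \delta \to I$ onto the constant cosimplicial object (available because $\delta$ is a contractible standard system of simplices) induces simultaneously a morphism of $2$-operads $\Coend_{\Tam_2}(\epsilon) : \Coend_{\Tam_2}(\delta) \to \Coend_{\Tam_2}(I)$ and a compatible comparison between the corresponding centers. By the last clause of Theorem~\ref{condensation} the target operad is $\Coend_{\Tam_2}(I) = \fAss_2$ and its action on $\CH_I(A)$ is exactly the duoid structure of Theorem~\ref{Za_pet_dni_stehovani.}; since $\fAss_2$-algebras are duoids (Example~\ref{algebraass_2}), the naturality square for the action in $\delta$ says precisely that the $\Coend_{\Tam_2}(\delta)$-action on the homotopy center projects along $\Coend_{\Tam_2}(\epsilon)$ to the canonical duoid structure on the center. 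As $\Coend_{\Tam_2}(\epsilon)$ is a weak equivalence of $2$-operads (this is the content of contractibility), the contractible $2$-operad action is a genuine homotopical lift of the strict duoid structure, as claimed.

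The step I expect to be the main obstacle is the last identification, namely matching the strict center $Z(\ttM) = \CH_I(\ttM,\ttM)$ — on which the duoid structure of Theorem~\ref{Za_pet_dni_stehovani.} actually lives — with the $I$-center that appears in the naturality square, which instead involves $Fb(\ttM)$ rather than $\ttM$ and the constant $\delta = I$ rather than the chosen contractible one. Bridging this gap requires the homotopy invariance of the center under fibrant replacement and under change of the contractible cosimplicial parameter $\delta$, which are exactly the homotopical aspects announced in the Remark following the definition of the homotopy center and deferred to \cite{batanin-berger-markl:homotopycenter}. Within the present paper I would therefore state the lifting as taking place along the operad weak equivalence $\Coend_{\Tam_2}(\delta) \to \fAss_2$ and cite the sequel for the comparison of the two flavours of center, which is what makes the word \emph{lifts} fully rigorous.
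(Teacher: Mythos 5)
The paper offers no proof of Corollary~\ref{homotopicalDeligne} at all: it is presented as the specialization of Theorem~\ref{duoidalDeligne} to the homotopy center $\CH(\ttM,\ttM)=\CH_\delta\big(Fb(\ttM),Fb(\ttM)\big)$, with the lifting clause supplied by the $\delta=I$ case of Theorem~\ref{condensation}, and the text explicitly defers the proofs of Theorems~\ref{duoidalDeligne} and~\ref{homotopicalDeligne} to the sequel \cite{batanin-berger-markl:homotopycenter}. Your assembly is exactly this intended derivation, and your closing paragraph correctly isolates the genuinely deferred content --- homotopy invariance of the $\delta$-center under fibrant replacement and under change of the contractible parameter $\delta$ --- which is the same caveat the paper makes in the remark following the definition of the homotopy center.

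One step in your middle paragraph does not work as stated: $\Coend_{\Tam_2}(-)$ is a coendomorphism (condensation) construction, so $\delta$ occurs in it with mixed variance, and the augmentation $\epsilon:\delta\to I$ does \emph{not} formally induce a morphism of $2$-operads $\Coend_{\Tam_2}(\delta)\to\Coend_{\Tam_2}(I)$; covariant functoriality in the cosimplicial variable is simply not available for such constructions. The map you need is instead already packaged in Theorem~\ref{duoidalDeligne}: in this paper an $(n-1)$-terminal $n$-operad is \emph{contractible} by definition only when it comes equipped with an operad map $A\to\fAss_n$ that is an arity-wise weak equivalence, so contractibility of $\Coend_{\Tam_2}(\delta)$ hands you the augmentation $\Coend_{\Tam_2}(\delta)\to\fAss_2$ directly, with no appeal to functoriality. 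Since your final paragraph in fact restates the lifting along exactly this weak equivalence, the proposal is sound once the claim about $\Coend_{\Tam_2}(\epsilon)$ is replaced by this citation; note also that for the comparison of centers the direction is $\CH_I\to\CH_\delta$, because $\Tot_\delta$ is contravariant in $\delta$, and the compatibility of the two actions along this map belongs, as you say, to the homotopical analysis carried out in \cite{batanin-berger-markl:homotopycenter}.
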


Theorem~\ref{homotopicalDeligne} has been proved by Tamarkin in \cite{tamarkin:CM07} 
for the particular case of the Tamarkin complex of the functor
\[
\Id:Cat(\Chain)\to Cat(\Chain)
\] 
thus answering the question {\it `what do $DG$-categories form?'} in
the title of that paper.  The
proofs of Theorems \ref{duoidalDeligne} and \ref{homotopicalDeligne}
will be addressed in \cite{batanin-berger-markl:homotopycenter}.

\subsection{Relation to the classical Deligne's conjecture}
Let $\duo$ be a cocomplete symmetric monoidal category. Then $e=v$ in
$\duo$ and, for any $X\in \duo$  and a $2$-tree $T$, one clearly has
$X^T \cong X^{T^{(p)}}$. This implies that 
$\End_{X2}$ satisfies condition (\ref{pruned}). The operad
$\End_{X2}$ is, however, not $1$-terminal, so it is not pruned  
in the sense of Definition~\ref{prruned}. 
But one can still construct a modified
pruned endomorphism $2$-operad $End_{X2}$ together with an operadic
map $End_{X2} \to\End_{X2}$ which is an isomorphism for all trees of
height $2.$ The operad $End_{X2}$ is determined by these conditions
uniquely.  Moreover, any morphism from a pruned $2$-operad $A$ to
$\End_{X2}$ can be factorized through $End_{X2}$.

The operad $End_{X2}$ is the endomorphism $2$-operad of $X$ in the
monoidal $2$-globular category~$\Sigma^2 V$. Therefore, the results of
\cite{batanin:conf,batanin:AM08} are applicable and an action $A \to
End_{X2}$ of $A$ on $X$ is equivalent to a (classical) action of the
symmetric operad $sym_2(A)$ on $X$.

If $V$ is a monoidal model category satisfying the requirements of
Theorem 8.6 or 8.7 of \cite{batanin:conf} and $A$ is a contractible
cofibrant $2$-operad, then the symmetrization $sym_2(A)$ has the homotopy
type of the little $2$-disks operad \cite{batanin:conf}. So we have

\begin{theorem}
\label{Deligne} 
If $\duo$ is a symmetric monoidal $V$-category and the assumptions of
Theorem~\ref{duoidalDeligne} are satisfied, then $\CH_{\delta}(A)$ of a
multiplicative operad $A$ in $\duo$ admits a structure of an algebra of a
$E_2$-operad. In particular, such an action exists on the homotopy
center $\CH(\ttM,\ttM)$ of a monoid $\ttM$ in a monoidal model
$\duo$-category. \end{theorem}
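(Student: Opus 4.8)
The plan is to assemble the ingredients prepared in this subsection into a single chain of operad actions, so that no essentially new homotopical work is required here. First I would invoke Theorem~\ref{condensation}: for the fixed cosimplicial object $\delta$, the Hochschild object of a multiplicative $1$-operad $A$ carries a canonical action of the pruned $2$-operad $\Coend_{\Tam_2}(\delta)$, i.e.\ a map of $2$-operads $\Coend_{\Tam_2}(\delta)\to \End_{X2}$ in $V^{(2)}$ whose $1$-truncation is $k_v$, where I abbreviate $X := \CH_\delta(A)\in\duo$. Next I would apply Theorem~\ref{duoidalDeligne}: the strong $\delta$-reductivity hypothesis on the lattice path operad guarantees that $\Coend_{\Tam_2}(\delta)$ is contractible.

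I would then exploit the symmetry of $\duo$. Since $e=v$, one has $X^T\cong X^{T^{(p)}}$ for every $2$-tree $T$, so $\End_{X2}$ satisfies~(\ref{pruned}); as recalled above, the action of the pruned operad $\Coend_{\Tam_2}(\delta)$ therefore factors uniquely through the modified pruned endomorphism operad $End_{X2}$, which is exactly the endomorphism $2$-operad of $X$ in the monoidal $2$-globular category $\Sigma^2 V$. At this stage the comparison results of \cite{batanin:conf,batanin:AM08} become applicable: an action $\Coend_{\Tam_2}(\delta)\to End_{X2}$ is equivalent to a classical action of the symmetrization $sym_2\big(\Coend_{\Tam_2}(\delta)\big)$ on $X$.

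To identify the homotopy type of the acting symmetric operad, I would first choose a cofibrant contractible $2$-operad $P$ together with a weak equivalence $P\to\Coend_{\Tam_2}(\delta)$; composing with the action produces a $P$-action on $X$, hence a $sym_2(P)$-action. Since $V$ satisfies the hypotheses of Theorem~8.6 or~8.7 of \cite{batanin:conf} and $P$ is contractible and cofibrant, $sym_2(P)$ has the homotopy type of the little $2$-disks operad, i.e.\ is an $E_2$-operad. This shows that $X=\CH_\delta(A)$ is an $E_2$-algebra. For the ``in particular'' clause I would specialize to $A=\End_{Fb(\ttM)}$ with $\delta$ a standard system of simplices, for which $\CH_\delta(A)=\CH(\ttM,\ttM)$ is the homotopy center, so the same action exhibits it as an $E_2$-algebra.

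I expect the only genuine obstacle internal to this argument to be the cofibrancy bookkeeping in the last step: the condensed operad $\Coend_{\Tam_2}(\delta)$ delivered by Theorem~\ref{condensation} need not itself be cofibrant, so one must verify that a cofibrant contractible resolution exists in the model category of $2$-operads and that the $E_2$-identification of \cite{batanin:conf} is invariant under this replacement. All the deeper homotopical input (contractibility of $\Coend_{\Tam_2}(\delta)$, and the fact that the symmetrization of a cofibrant contractible $2$-operad is $E_2$) is already isolated in Theorem~\ref{duoidalDeligne} and in the symmetrization theorem of \cite{batanin:conf}, so the proof here is essentially the formal concatenation of these facts with the symmetry-induced pruning.
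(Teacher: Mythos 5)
Your proposal is correct and follows essentially the same route as the paper: the symmetry hypothesis gives $e=v$ and $X^T\cong X^{T^{(p)}}$, so the contractible $2$-operad action of Theorems~\ref{condensation} and~\ref{duoidalDeligne} on $\CH_\delta(A)$ factors through the pruned endomorphism operad $End_{X2}$ in $\Sigma^2 V$, where the symmetrization results of \cite{batanin:conf,batanin:AM08} convert it into an action of an operad of $E_2$ homotopy type. Your explicit insertion of a cofibrant contractible resolution $P\to\Coend_{\Tam_2}(\delta)$ before symmetrizing is a careful touch the paper leaves implicit (its citation of Theorems~8.6--8.7 of \cite{batanin:conf} requires cofibrancy), but it is a refinement of, not a departure from, the paper's argument.
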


This form of Deligne's conjecture generalizes the classical one. As a
corollary we have

\begin{corollary}
The Hochschild complex of the $dg$-category $\calC$ (see Example
\ref{classicalcenter}) is an algebra of an $E_2$-operad. In particular,
if $\calC = \Sigma A$ for a unital associative algebra $A$, we get the
classical Deligne conjecture for the Hochschild complex of $A$.
\end{corollary}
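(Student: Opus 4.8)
The plan is to deduce the corollary as a direct specialization of Theorem~\ref{Deligne} to the case $V = \duo = \calK = \Chain$, the category of chain complexes of $k$-modules with its standard symmetric monoidal closed structure. The first task is to recognize the object to be equipped with an $E_2$-action. By Example~\ref{classicalcenter}, together with Example~\ref{JArca}, the Hochschild complex of a $dg$-category $\calC$ is the $\delta$-center $\CH_\delta(\calC,\calC)$ for the standard system of simplices $\delta := C_*(\Delta^\bullet)$ of normalized simplicial chains; and by the discussion of Example~\ref{IdId} and Section~\ref{sec:tamark-compl-funct-1} this center is exactly $\CH_\delta(\ttM,\ttM)$ for the monoid $\ttM$ in the relevant monoidal $\duo$-category determined by $\calC$. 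Thus the Hochschild complex is precisely an object of the form treated by Theorem~\ref{Deligne}, and since all objects of $\Chain$ are fibrant the $\delta$-center already computes the homotopy center.

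The second step is to check the hypotheses of Theorem~\ref{Deligne}. Being symmetric monoidal, $\duo = \Chain$ satisfies $e = v = k$ and $\boxx_0 = \boxx_1 = \ot$, which is the situation of the subsection relating our construction to the classical conjecture; in particular $X^T \cong X^{T^{(p)}}$ for every $2$-tree $T$. It then remains to verify the assumptions of Theorem~\ref{duoidalDeligne}: that $\Chain$ is a monoidal model category (say with the projective model structure), that $\delta = C_*(\Delta^\bullet)$ is a standard system of simplices, which is recorded in Example~\ref{JArca}, and that the lattice path operad is strongly $\delta$-reductive in the sense of \cite[Definition~3.7]{batanin-berger}.

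Granting these, Theorem~\ref{Deligne} endows $\CH_\delta(\calC,\calC)$, i.e.\ the Hochschild complex of $\calC$, with the structure of an algebra over an $E_2$-operad, which is the first assertion. For the second assertion I would take $\calC = \Sigma A$, the one-object $\Chain$-category whose endomorphism complex is the unital associative (or differential graded) algebra $A$. The cyclic bar formula defining $\CH_\delta$ then reduces to the usual normalized Hochschild complex of $A$, so the $E_2$-action constructed above is exactly the classical Deligne conjecture for $A$.

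The main obstacle I anticipate is not the formal specialization but the verification of strong $\delta$-reductivity of the lattice path operad for $\delta = C_*(\Delta^\bullet)$, since it is this technical input that forces the condensed $2$-operad $\Coend_{\Tam_2}(\delta)$ of Theorem~\ref{condensation} to be contractible. A secondary point requiring care is the passage, via the symmetrization $sym_2$ and the homotopy-type computation of \cite{batanin:conf,batanin:AM08}, from the abstract contractible $2$-operad action to an action of the chains on the little $2$-disks operad, so that the statement genuinely coincides with the classical Deligne conjecture rather than merely an abstract $E_2$-structure.
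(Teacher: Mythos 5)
Your proposal is correct and follows exactly the route the paper intends: the corollary is stated as an immediate specialization of Theorem~\ref{Deligne} to $V=\duo=\calK=\Chain$ with $\delta=C_*(\Delta^\bullet)$, using the identifications of Examples~\ref{JArca} and~\ref{classicalcenter} (and the one-object case $\calC=\Sigma A$), with no separate argument given. You also correctly locate the genuine technical inputs — strong $\delta$-reductivity of the lattice path operad and the symmetrization $sym_2$ computation — in \cite{batanin-berger} and \cite{batanin:conf,batanin:AM08}, which is precisely where the paper delegates them.
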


%\bibliography{b}

\begin{thebibliography}{1}

\bibitem{AM} M. Aguillar, S. Mahajan.
\newblock Monoidal functors, species and Hopf algebras.
\newblock {\em CRM Monograph Series}, AMS, 29, 2010.
 
\bibitem{barwick}
C. Barwick. 
\newblock On (enriched) left  Bousfield localizations of
model categories.
\newblock Preprint  {\tt arXiv:0708.2067v2}, 2008.

\bibitem{BFSV} C. Balteanu, Z. Fiedorowicz, R. Schw\"anzl and R. Vogt.
 \newblock Iterated monoidal categories.
 \newblock{\em  Adv. in  Math.}  {176} (2003), 277--349. 

\bibitem{batanin:globular}
M.A. Batanin.
\newblock Monoidal globular categories as a natural environment for the theory
  of weak {$n$}-categories.
\newblock {\em Adv. in Math.}, 136(1):39--103, 1998.

\bibitem{batanin:conf}
M.A. Batanin.
\newblock Symmetrization of {$n$}-operads and compactification of real
  configuration spaces.
\newblock {\em Adv. in Math.}, 211(2):684--725, 2007.

\bibitem{batanin:AM08}
M.~A. Batanin.
\newblock The {E}ckmann-{H}ilton argument and higher operads.
\newblock {\em Adv. Math.}, 217(1):334--385, 2008.

\bibitem{batanin-berger}
M.A. Batanin, C.~Berger.
\newblock The lattice path operad and Hochschild cochains.
\newblock {\em Cont. Math.}, 504, 23-52, 2009.

\bibitem{batanin-berger-markl}
M.A. Batanin, C.~Berger and M.~Markl.
\newblock Operads of natural operations {I}: {Lattice} paths, braces and
  {Hochschild} cochains.
\newblock  {\em S\'{e}minaire et Congr\`{e}s}, Collection SMF, 26, pp.1-33, 2011.

\bibitem{batanin-berger-markl:homotopycenter}
M.A. Batanin, C.~Berger and M.~Markl.
\newblock Operads of natural operations {II}: Tamarkin complex and
homotopy center. \newblock Work in progress.

\bibitem{BCW1} M.A. Batanin, D.-C. Cisinski and M. Weber.
\newblock The lifting theorem for multitensors. \hfill\break 
\newblock Preprint {\tt arXiv:1106.1979}, 2006.


\bibitem{BCW2}  M.A. Batanin, D.-C. Cisinski and M. Weber.
\newblock Higher Gray tensor products.
\newblock Work in progress.


\bibitem{batanin-markl}
M.A. Batanin, M.~Markl.
\newblock Crossed interval groups and operations on the {Hochschild}
  cohomology.
\newblock Preprint {\tt arXiv:0803.2249}, March 2008.


\bibitem{batanin:street} M.A.~Batanin, R.~Street.
\newblock The universal property of the multitude of trees.
\newblock {\em Journal of Pure and Appl. Algebra}, 154, 2000, 3-13.

\bibitem{BergerMoerdijk} 
C.~Berger, I.~Moerdijk.
\newblock The Boardman-Vogt resolution of operads in monoidal model categories. 
\newblock {\em Topology}, 45, 807-849, 2006.

\bibitem{BKP} R. Blackwell, G.M. Kelly and A.J. Power.
\newblock Two-dimensional monad theory.
\newblock {\em Journal of Pure and Appl. Alg.}, 59, 1989, 1-41.  

\bibitem{C} D.-C. Cisinski.
\newblock Batanin higher groupoids and homotopy types.
\newblock {\em Cont. Math.}, 431 (2007), 187-202. 

\bibitem{Day} B. Day. 
\newblock {On closed categories of functors}.
\newblock {\em Lecture Notes in Math.,} vol. \textbf{137}, Springer Verlag 1970, 1--38.

\bibitem{DayStreet} B. Day, R. Street.
\newblock Centres of monoidal categories of functors.
\newblock {\em Cont. Math.}, 431 (2007), 171-186. 

\bibitem{eilenberg-kelly}
S.~Eilenberg, G.M. Kelly.
\newblock Closed categories.
\newblock In {\em Proc. {C}onf. {C}ategorical {A}lgebra ({L}a {J}olla,
  {C}alif., 1965)}, pages 421--562. Springer, New York, 1966.


\bibitem{Forcey} S. Forcey.
\newblock Enrichment over iterated monoidal categories.
\newblock{\em Algebraic and Geometric Topology} 4 (2004),
95--119.

\bibitem{Forcey2} S. Forcey, J. Siehler and E.S. Sowers.
\newblock Combinatorial $n$-fold monoidal categories and $n$-fold operads.
\newblock Preprint {\tt arXiv:0411.5561}, 2011.

\bibitem{GPS} R. Gordon, A.J. Power and R. Street.
\newblock Coherence for tricategories,
\newblock {\em Memoirs AMS}, 558, 1995.

\bibitem{Gray} J.W. Gray.
\newblock Formal category theory: adjointness in $2$-categories.
\newblock {\em Lecture Notes in Math.,} vol. \textbf{391}, Springer Verlag 1974.

\bibitem{JS} A. Joyal, R. Street.
\newblock Braided tensor categories.
\newblock {\em Adv. in Math.} 102, 1993, 20--78.

\bibitem{Keller}
B. Keller. 
\newblock Derived invariance of higher structures on the Hochschild complex.
\newblock {\em Preprint} \catcode`\~=11
available at: math.jussieu.fr/~keller/publ/dih.dvi.
\catcode`\~=13

\bibitem{Kelly} 
G.M. Kelly, 
\newblock Basic concepts of enriched category theory. 
\newblock {\em London Mathematical Society Lecture Note Series}, 64 , 1982.

\bibitem{KL}
F. Foltz, G.M. Kelly and C. Lair. 
\newblock Algebraic categories with few monoidal biclosed structures
or none.
\newblock{\em  Journal of Pure and Appl. Alg.}, 17:171-177, 1980.

\bibitem{Kontsevich} M. Kontsevich.
\newblock  Operads and motives in deformation quantization.
\newblock {\em  Lett. Math. Phys.}  { 48}(1)  (1999),  35--72.  

\bibitem{Lack}
S.~Lack.
\newblock A coherent approach to pseudomonads.
\newblock {\em Adv. in Math.}, 152, 179-202, 2000.

\bibitem{LackV}S.~Lack.
\newblock Homotopy-theoretic aspects of $2$-monads.
\newblock {\em Journal of Homotopy and Related Structures}, 2(2), 229-260, 2007.

\bibitem{Leinster}
T.~Leinster.
\newblock  Higher categories, Higher operads.  
\newblock{\it London Mathematical Society Lecture Note Series,} { 298},  { Cambridge University Press, Cambridge,} 2004. 


\bibitem{markl:ha}
M.~Markl.
\newblock Homotopy algebras are homotopy algebras.
\newblock {\em Forum Mathematicum}, 16(1):129--160, January 2004.


\bibitem{Muro}
F.~Muro.
\newblock Homotopy theory of non-symmetric operads.
\newblock Preprint {\tt arXiv:1101.1634}, 2001.

\bibitem{Street} R.~Street.
\catcode`\~=11
\newblock Monoidal category theory for manifold invariants.
\newblock {\em  Lecture notes}, UCL 2011, 
available at www.maths.mq.edu.au/~street/Lecture4.pdf
\catcode`\~=13

\bibitem{StH} 
R.~Street.
\newblock Categorical Structures.
\newblock {\em Handbook of Algebra}, Elsevier, v.1, 529-577, 1996.

\bibitem{tamarkin:CM07}
D.E. Tamarkin.
\newblock What do dg-categories form?
\newblock {\em Compositio Math.}, 143:1335--1358, 2007.

\end{thebibliography}

\def\cprime{$'$}

\end{document}